\newtheorem{theorem}{Theorem}[section]
\newtheorem{lemma}[theorem]{Lemma}
\newtheorem{proposition}[theorem]{Proposition}
\newtheorem{remark}[theorem]{Remark}
\newtheorem{consequence}[theorem]{Consequences}
\numberwithin{equation}{section}
\newcommand{\mc}[1]{{\mathcal #1}}
\newcommand{\mf}[1]{{\mathfrak #1}}
\newcommand{\mb}[1]{{\mathbf #1}}
\newcommand{\bb}[1]{{\mathbb #1}}
\newcommand{\R}{\mathbb R}
\newcommand{\N}{\mathbb N}
\newcommand{\Z}{\mathbb Z}
\let\L=\Lambda
\newcommand{\<}{\langle}
\renewcommand{\>}{\rangle}
\newcommand{\1}{\,\rlap{\small 1}\kern.13em 1}
\newcommand{\sqr}[2]{{\vcenter{\hrule height.#2pt%
                      \hbox{\vrule width.#2pt height#1pt\kern#1pt%
                            \vrule width.#2pt}%
                      \hrule height.#2pt}}}
\renewcommand{\limsup}{\mathop{\overline{\hbox{\rm lim}}}}
\newcommand{\wh}{\widehat}
\newcommand{\ws}{\wh\Sigma}
\newcommand{\mbe}{\mathbf{E}}
\renewcommand\footnotemark{}
\begin{document}
\title[boundary driven generalised contact process] 
{A boundary driven generalised contact process with exchange of particles:
Hydrodynamics in infinite volume}

\author[K. Kuoch]{Kevin Kuoch}  
\address{Kevin Kuoch
  \hfill\break\indent  Johann Bernoulli Institute for Mathematics and Computer Science,
  \hfill\break\indent University of Groningen, PO Box 407, 9700AK Groningen, The Netherlands
    }
\email{kevin.kuoch@gmail.com}

\author[M. Mourragui]{Mustapha Mourragui}  
\address{Mustapha Mourragui
  \hfill\break\indent LMRS, UMR 6085, Universit\'e de Rouen,
  \hfill\break\indent Avenue de l'Universit\'e, BP.12, Technop\^ole du
  Madril\-let, \hfill\break\indent
F76801 Saint-\'Etienne-du-Rouvray, France.}
\email{Mustapha.Mourragui@univ-rouen.fr}

\author[E. Saada]{Ellen Saada}  \address{Ellen Saada
  \hfill\break\indent CNRS, UMR 8145, Laboratoire MAP5, Universit\'e Paris Descartes,
  Sorbonne Paris Cit\'e,
  \hfill\break\indent 45 rue des Saints-P\`eres, 75270 Paris Cedex 06, France}
\email{Ellen.Saada@mi.parisdescartes.fr}

\date{\today}


\keywords{generalised contact process, two species process, 
hydrodynamic limit, specific entropy, stationary
  nonequilibrium states, reservoirs, infinite volume, current. } 

\subjclass[2000]{Primary 82C22; Secondary 60F10, 82C35}

\maketitle
\thispagestyle{empty}

\begin{abstract}
We consider a two species process which evolves
in a  finite or infinite  domain in contact with  particle  reservoirs 
at different densities,
according to the superposition of
a  generalised contact process and a rapid-stirring dynamics in the bulk of the domain,
and a creation/annihilation mechanism at its boundaries.
For this process, we study the law of large numbers for densities and 
current. The limiting equations are 
 given by a system of non-linear reaction-diffusion equations with Dirichlet boundary conditions.
\end{abstract}

\bigskip

\section{Introduction}\label{sec:intro}
In this paper, we consider the evolution on a lattice
of  two types of populations,  according to a
boundary driven generalised contact process with exchange of particles. This process is
the superposition of a  \textit{contact process with random slowdowns} (or CPRS) 
and a rapid-stirring dynamics in the bulk of the domain,
and a creation/annihilation mechanism at its boundaries, due to stochastic reservoirs.
\smallskip

\noindent
The CPRS was introduced in \cite{k} to model the \textit{sterile insect technique},
developed by E. Knipling and R. Bushland (see \cite{Kni55,DHB05})
in the fifties to control the New World screw worm, a serious threat to warm-blooded animals. 
This pest has been eradicated from the USA and Mexico only
in recent decades. The technique works as follows: Screw worms are reared in captivity 
and exposed to gamma rays. The male screw worms become sterile. If a sufficient number of sterile 
males are released in the wild then enough female screw worms are mated by sterile 
males so that the number of offspring will decrease generation after generation.
This technique is well suited for screw worms, because female apparently
mate only once in their lifetime; but it is also being tried for a large variety of pests,
including current projects to fight dengue in South America (Brazil, Panama).
\smallskip

\noindent
The particle system $(\eta_t)_{t\ge 0}$ we look at has state space
 $\{0,1,2,3\}^S$, for $S\subset\mathbb Z^d$ 
  (we refer to \cite{L} for interacting particle systems). 
Each site of $S$ is either empty (we say it is in state 0),
occupied by wild screw worms only (state 1), by sterile screw worms only (state 2),
or by wild and sterile screw worms together (state 3). On each site,
 we only keep track of the presence or not
of the type of the male screw worms (and not of their number), 
and we assume that enough female screw worms are around as not to limit mating.
\smallskip

\noindent
For the CPRS dynamics, 
we introduce a release rate $r$  and  growth rates $\lambda_1$, $\lambda_2$. 
A site gets sterile males at rate $r$ independently of everything else
(this corresponds to an artificial introduction of sterile males).
The rate at which wild males give birth (to wild males) on neighbouring sites
is $\lambda_1$ at sites in state 1, and $\lambda_2$ at sites in state 3.
Sterile males do not give birth.
 We  assume that $\lambda_2<\lambda_1$ to
reflect the fact that at sites in state 3 the fertility is decreased. 
Deaths for each type of  male screw worms 
 occur at all sites at rate 1, 
they are mutually independent.
\smallskip

\noindent
For a configuration $\eta$, the transitions of the CPRS at a site $x\in S$
are summarized as follows:
\begin{equation}\label{def:contact}
\begin{array}{l@{\extracolsep{1cm}}l}
0 \rightarrow 1 \ \mbox{ at rate } \lambda_1 n_1(x,\eta) + \lambda_2 n_3(x,\eta)
 & 1 \rightarrow 0 \ \mbox{ at rate } 1\\ 
0 \rightarrow 2 \ \mbox{ at rate } r & 2 \rightarrow 0 \ \mbox{ at rate } 1\\ 
1 \rightarrow 3 \ \mbox{ at rate } r & 3 \rightarrow 1 \ \mbox{ at rate } 1\\ 
2 \rightarrow 3 \ \mbox{ at rate } \lambda_1 n_1(x,\eta) + \lambda_2 n_3(x,\eta)
 & 3 \rightarrow 2 \ \mbox{ at rate } 1 
\end{array}
\end{equation}
where $n_i(x,\eta)$ is the number of nearest neighbours of $x$ in state $i$ for $i=1,3$.\\ 
In \cite{k}, a phase transition in $r$ is exhibited for the CPRS  in  $S=\mathbb Z^d$:
Assuming that $\lambda_2 \leq  \lambda_c < \lambda_1$, where $\lambda_c$ denotes the critical value of 
the $d$-dimensional basic contact process (see \cite{L} on the basic contact process), 
there exists a critical value $r_c$ such that 
 wild male screw worms (that is, states 1 and 3) 
survive for $r<r_c$, and die out for $r\ge r_c$.
\smallskip

\noindent
Our goal in the present paper is, for a given  infinite volume $S$ with boundaries
 (this expression seems puzzling, but
we typically think of a piece of land whose width is much smaller than its length; 
the latter is thought as infinite, and the former as having boundaries), 
to add to the above dynamics 
 displacements  within $S$,  as well 
as departures from $S$ and immigrations to $S$. 
We model them respectively by an exchange dynamics in the bulk, and by a creation/annihilation mechanism
at the boundaries of $S$ due to the presence of stochastic reservoirs. For the superposition
of the CPRS with these two dynamics,
we are interested in the evolution of the empirical densities and currents 
 corresponding to wild and sterile screw worms,  
for which we establish hydrodynamic limits.  
The limiting equations are given by systems of non-linear reaction-diffusion 
equations, with Dirichlet boundary conditions.  We also obtain hydrodynamic limits when
$S$ is either a finite volume 
in contact with reservoirs, or the infinite volume $\mathbb Z^d$. 
\smallskip

\noindent
Hydrodynamic limits investigate the macroscopic properties of interacting particle systems
(we refer to  the books \cite{sp1, kl} for a  	
thorough presentation). 
From a probabilistic point of view, it corresponds to a law of large numbers 
for the evolution of the spatial
density of particles in a given system. 
After the results of \cite{gpv, kov}, where
the intensive use of large deviation techniques led to a robust proof of the
hydrodynamic behaviour of a large class of finite volume gradient equilibrium systems, 
the method has been extended to
nonequilibrium systems in a bounded domain in \cite{fpv, els1, els2}, as well as
in an infinite volume without boundaries for conservative dynamics in \cite{fr1a,fr1b, fr1, y1, lm}.

\noindent
In the last years, many papers have been devoted to systems in contact with reservoirs 
in a bounded domain; we just quote a few of them, 
\cite{BDGJL06, bd, blm09, BL12} and references therein.
The nonequilibrium systems considered there were provided by lattice gas models
 also submitted to an external mechanism of creation and annihilation of particles,
modelling exchange reservoirs at the boundaries. 
Even though the stochastic dynamics describing the evolution in the bulk was conservative 
and in equilibrium,
the action of the reservoirs made the full process non reversible. 

\noindent
The hydrodynamic limit of a class of jump, birth and death processes 
has been studied in \cite{dfl1,dfl2}: the 
combination of  the Symmetric Simple Exclusion Process and of a Glauber
dynamics in dimension 1 to model the annihilation and creation of particles
led to a reaction-diffusion equation. 
The density and current large deviations have then been proved 
for the dynamics evolving on a torus and on a one-dimensional 
bounded interval in contact with reservoirs respectively  in \cite{jlv, BL12}; 
there, the lower bound of the large deviations principle was 
obtained only for smooth trajectories, and for birth and the death
rates that were monotone, concave functions.
\smallskip

\noindent
To our knowledge, the present paper  is the first work about 
hydrodynamics of an interacting particle system evolving in an 
 \textit{infinite} volume with boundary stochastic reservoirs
 and leading to \textit{a system}  of reaction-diffusion equations.
The natural questions that emerge after the hydrodynamic 
 limit are fluctuations and large deviations with respect to the
expected limit. Nonequilibrium fluctuations of interacting particle systems have only been
derived for few one-dimensional dynamics. It is one of the main open problems in the field. 
  We now plan to study  phase transitions, 
hydrostatics, and large deviations for our model.  
\smallskip

\noindent
Our set-up is the following. 
 The non-conservative dynamics that we consider 
 evolves in the infinite cylinder 
  \begin{equation}\label{def:LaN}
 \Lambda_N=\{-N,\cdots,N\}\times \mathbb Z^{d-1}.
 \end{equation} 
  In the bulk of $\Lambda_N$, particles evolve according to 
 the superposition of the CPRS  (which is non-conservative) and of an exchange dynamics
 (which is conservative). 
 The latter satisfies a detailed balance condition with respect to a family of Gibbs measures.
The reservoirs defining the movements of populations at the 
infinite boundary 
 \begin{equation}\label{def:GammaN}
\Gamma_N:=\{-N,N\}\times \mathbb Z^{d-1} 
\end{equation} 
   of $\Lambda_N$ are modelled by a reversible birth and death process.
 The full dynamics 
keeps fixed the value of the density at the boundary.
We have therefore to face the difficulty of non reversibility in the bulk 
combined with the fact that the stationary measures
are not explicitly known. 
\smallskip

\noindent
Our key tools to establish hydrodynamic limits will be first the analysis
of the specific entropy and the specific Dirichlet form \textit{in infinite volume}
 (to our knowledge, such an analysis is carried out
in infinite volume with reservoirs for the first time), 
then the use of couplings to derive hydrodynamics
by going from systems evolving 
in a large but finite volume to systems in infinite volume. 
 Here by a large finite volume, we mean the cylinder with a length $M_N$ 
(to be precised later on) large with respect to $N$. 
 The definition of  our dynamics 
will enable us to use basic coupling, which is not always possible
(we refer to \cite{gs, bo} for dynamics requiring more intricate couplings). 
 Finally we prove uniqueness of weak solutions to the limiting system of 
reaction-diffusion equations. 
\\ \\ 
 The paper is organized as follows.
In Section \ref{sec:2}, we detail our model.  In Section \ref{sec:results}, we 
state our results: 
the hydrodynamic limit of the boundary driven  process in infinite volume 
is stated in Theorem \ref{res:hydrodynamics}; 
we also state two related results, hydrodynamics in 
the full space $\mathbb Z^d$ in Theorem \ref{infvol:hydrodynamics}, and 
 in a  bounded space with boundary conditions in Theorem \ref{bdd:hydrodynamics}. 
Lastly the law of large numbers for the conservative and non-conservative currents 
is stated in Proposition \ref{res:lln_currents}.  For the proofs of 
Theorems \ref{infvol:hydrodynamics} and \ref{bdd:hydrodynamics} we refer to \cite{phd}. 
Sections \ref{sec:hydrodynamics} to \ref{sec:uniqueness} are devoted to the 
proof of Theorem \ref{res:hydrodynamics}: it is outlined 
in Section \ref{sec:hydrodynamics};  
Section \ref{sec:specific} deals with specific entropy and Dirichlet forms,
Section \ref{sec:hydro_fini} with hydrodynamics in large finite volume,
Section \ref{sec:hydro_infini}  with couplings to
derive the boundary conditions in infinite volume; 
uniqueness of solutions is proved in Section \ref{sec:uniqueness}. 
In Section \ref{sec:currents} we prove 
Proposition \ref{res:lln_currents}. 
\bigskip
\section{Description of the model}\label{sec:2}
\subsection{Equivalent formulations for configurations}\label{subsec:model}
Rather than studying directly the process $(\eta_t)_{t\ge 0}$ describing
the  evolution of states  $1,2,3$, we introduce another 
interpretation for the model.  
The  corresponding  configuration space is 
\begin{equation}\label{eq:state-space}
\widehat \Sigma_N:=\big(\{0,1\}\times\{0,1\} \big)^{\Lambda_N}.
\end{equation}
  (In the sequel, we shall denote with 
a ``hat'' everything related to  product spaces and to
vectors).
Elements of $\ws_N$ are denoted by $(\xi,\omega)$, where 
  $\xi$-particles represent the wild screw worms, while
$\omega$-particles represent the sterile ones.
The correspondence with $(\eta_t)_{t\ge 0}$ is given by the following relations: 
For $x\in\Lambda_N$,
\begin{equation}\label{eq:2.0}
\begin{array}{l@{\extracolsep{1cm}}l}
\eta(x)=0 \quad \Longleftrightarrow & (1-\xi(x))(1-\omega(x))=1\, , \\ 
\eta(x)=1 \quad \Longleftrightarrow & \xi(x)(1-\omega(x))=1\, ,\\
\eta(x)=2 \quad \Longleftrightarrow & (1-\xi(x))\omega(x)=1\, ,\\
\eta(x)=3 \quad \Longleftrightarrow & \xi(x)\omega(x)=1\, .\\
\end{array}
\end{equation}
In other words,
$\xi(x)=1$ (resp. $\omega(x)=1$) if wild (resp. sterile) screw worms  
are present on $x$. Both can be present, giving the state 3 for $\eta(x)$,
or only one of them, giving the states 1 or 2 for $\eta(x)$. 
 \noindent 
We may also express the correspondence \eqref{eq:2.0} by an application
from $\widehat \Sigma_N$ to $\{1,2,3\}^{\Lambda_N}$, that is, we write
\begin{equation}\label{eq:omxieta} 
\eta=\eta((\xi,\omega)),\quad\mbox{ where, for any  } x\in\Lambda_N,\quad \eta(x) = 2\omega(x)+\xi(x)\, . 
\end{equation}
 Moreover, in order to describe the evolution of densities for states 
1, 2, 3, we also define, for $x\in \Lambda_N$,
\begin{equation}\label{omega}
\begin{cases}
        \eta_1(x) = \xi(x)(1-\omega(x)) \equiv {\bf 1}_{\{\eta(x) =1\}} \, ,\\
        \eta_2(x) = (1-\xi(x))\omega(x) \equiv {\bf 1}_{\{\eta(x) =2\}}\, ,\\
        \eta_3(x) = \xi(x)\omega(x) \equiv {\bf 1}_{\{\eta(x) =3\}}\, .
  \end{cases}
\end{equation}
  By abuse of language, when $\eta_i(x) = 1$ for $i=1,2,3$,
we say that there is a particle of type $i$ at $x$. 
It is convenient to complete \eqref{omega} by defining 
 empty occupation of site $x$ by 
\begin{equation}\label{omega-compl}
 \eta_0(x) = (1-\xi(x))(1-\omega(x)) = {\bf 1}_{\{\eta(x) =0\}}
 = 1-\eta_1(x)-\eta_2(x)- \eta_3(x) \, .
\end{equation}
In view of going from finite to infinite volume, 
for each positive integer $n$, denote by 
\begin{equation}\label{La-Nn}
\L_{N,n} =\{-N,\cdots,N\}\times \{-n,\cdots,n\}^{d-1}
\end{equation}
the sub-lattice of size 
$(2N+1)\times (2n + 1)^{d-1}$ of $\Lambda_N$, and by
\begin{equation}\label{Si-Nn}
\widehat \Sigma_{N,n}=\big(\{0,1\}\times\{0,1\} \big)^{\L_{N,n}}
\end{equation}
the corresponding state space. 
\subsection{The infinitesimal generator}\label{subsec:generator} 
The boundary driven generalised contact process with exchange of particles 
 in $\Lambda_N$   is the Markov process on $\widehat\Sigma_N$ whose generator 
$\mf L_{N}\, :=\, \mf L_{\lambda_1,\lambda_2,r,\widehat b,N}$ 
can be decomposed as  
\begin{equation}\label{eq:gen}
\mathfrak L_N \, :=\,  
N^2\mathcal L_{N} + \mathbb L_{N}\;+N^2\; L_{\widehat b,N} \, ,
\end{equation} 
with the generators $\mathcal L_N$ for the exchanges of particles, 
$\mathbb L_{N}$ for the CPRS, and $ L_{\widehat b,N}$ for the boundary dynamics.
We now detail those dynamics and their properties.  For the existence of the Markov process 
with generator $\mathfrak L_N$ in infinite volume, we refer to \cite[Chapter 1]{L},
since we consider a compact state space and bounded rates (defined below). Cylinder functions are
a core for the generator  $\mathfrak L_N$. \\ \\
$\bullet$ For the exchange dynamics, the action of $\mathcal L_N$ on cylinder functions 
$f:\widehat \Sigma_N\to \mathbb R$ is 
\begin{eqnarray}\label{def:genExch}
\mathcal{L}_{N} f(\xi,\omega) &=& \sum\limits_{k=1}^d \sum_{x,x+e_k\in\Lambda_N} 
\mathcal{L}^{x,x+e_k} f(\xi,\omega)\,\mbox{ with }\\\label{def:genExch-x}
\mathcal{L}^{x,x+e_k} f(\xi,\omega)
&=&
 f(\xi^{x,x+e_k},\omega^{x,x+e_k}) - f(\xi,\omega) \, ,
\end{eqnarray}
  where $(e_1,\ldots,e_d)$ denotes the canonical basis of $\mathbb R^d$,
  and  for any $\zeta \in \Sigma_N:=\{0,1\}^{\Lambda_N}$, $\zeta^{x,y}$ 
is the configuration obtained from $\zeta$  by
exchanging the occupation variables $\zeta (x)$ and $\zeta (y)$, i.e.\ 
\begin{equation*}
(\zeta^{x,y}) (z) := 
  \begin{cases}
        \zeta (y) & \textrm{ if \ } z=x\, ,\\
        \zeta (x) & \textrm{ if \ } z=y\, ,\\
        \zeta (z) & \textrm{ if \ } z\neq x,y\, .
  \end{cases}
\end{equation*}
Since $(\xi,\omega)\in \ws_N$, these exchanges correspond to
 jumps between sites $x$ and $y$ for $\xi$-particles and $\omega$-particles,
which do not influence each other. \\ \\
$\bullet$ We now define a family of \textit{invariant probability measures}   for $\mathcal L_N$, 
 which are  product, and parametrized  by three chemical potentials, 
since the exchange dynamics conserves, in each transition, the number of
particles of each type. 
  We denote by $\Lambda$ the macroscopic open cylinder 
 \begin{equation}\label{def:La} 
 \Lambda=(-1,1) \times \mathbb R^{d-1} \, .
 \end{equation} 
For a vector-valued function 
$\widehat m = (m_1, m_2, m_3) : \Lambda \to \mathbb R^3$, 
let $\bar \nu_{\widehat m(\cdot)}^N$ be the product measure on $\Lambda_N$ 
with varying chemical potential $\widehat m$
such that, for all positive integers $n$, the restriction 
$\bar \nu_{\widehat m(\cdot),n}^N$ of $\bar \nu_{\widehat m(\cdot)}^N$
to   $\widehat \Sigma_{N,n}$ 
is given by
\begin{equation}\label{eq:rest-bar-nu}
\bar \nu_{\widehat m(\cdot),n}^N (\xi,\omega) = \widehat Z_{\widehat m,n}^{-1} 
\exp \Big\{ \sum_{i=1}^3 \sum_{x\in \L_{N,n}} m_i(x/N) \eta_i(x) \Big\},
\end{equation}
where $\widehat Z_{\widehat m,n}$ is the normalization constant: 
\begin{equation}\label{eq:Z_mn}
\widehat Z_{\widehat m,n} = \prod_{x\in \L_{N,n}}\Big\{1+ \sum_{i=1}^3 \exp (m_i(x/N))\Big\}.
\end{equation} 
Notice that the family of measures 
$\big\{{\bar \nu}^N_{\widehat m}\, ,\ \ \widehat m\in\mathbb R^3\big\}$ 
with constant parameters is reversible with respect to the generator $\mathcal L_{N}$.
For $\widehat m\in \mathbb R^3$ and $1\le i\le 3$, let $\psi_i(\widehat m)$ 
be the expectation of $\eta_i(0)$ under ${\bar \nu}^N_{\widehat m}$:
$$
\psi_i(\widehat m)\, =\, \mbe^{\bar \nu_{\widehat m}^N} \big[\eta_i(0) \big]\, .
$$
Observe that the function $\Psi$ defined on $(0,+\infty)^3$ by 
$\Psi(\widehat m)=(\psi_1(\widehat m),\psi_2(\widehat m),\psi_3(\widehat m))$ is a bijection
from $(0,+\infty)^3$ to $(0,1)^3$. 
We shall therefore do a change of parameter:
For every $\widehat \rho =(\rho_1,\rho_2,\rho_3)\in (0,1)^3$,
we denote by $\nu_{\widehat \rho}^N$ the product measure 
$\overline \nu_{\widehat m}^N$ such that $\Psi(\widehat m) = \widehat \rho$, hence
\begin{equation}\label{eq:rho_i}
\rho_i\, =\, \mbe^{\nu_{\widehat \rho}^N} \big[\eta_i(0) \big]\, , \quad i=1,2,3\, .
\end{equation}
{}From now on, we work with the representation $\nu_{\widehat \rho(\cdot),n}^N$ of the measure
 $\bar \nu_{\widehat m(\cdot),n}^N$.\\
 
 \noindent
 Finally, note that in view of the diffusive scaling limit, the generator $\mathcal L_{N}$  
has been speeded up by $N^2$ in \eqref{eq:gen}.  \\ \\
$\bullet$ According to \eqref{def:contact},
 the generator $\mathbb L_{N}:=\mathbb L_{N,\lambda_1, \lambda_2,r} $ 
 of the CPRS  is given by 
\begin{eqnarray}\label{def:genCP-DRE}
\mathbb L_{N} f(\xi,\omega) & =&  \sum\limits_{x \in \Lambda_N} \mathbb L_{\Lambda_N}^x f(\xi,\omega)
\quad\mbox{ where, for any }{\mathcal A}\subset \Z^d,\,x \in \mathcal A\\\label{def:genCP-DRE-x}
\mathbb L_{\mathcal A}^x f(\xi,\omega)& =&
\Big( r(1-\omega (x)) + \omega (x) \Big) \Big[ f(\xi,\sigma^x \omega) - f(\xi,\omega) \Big] \\\nonumber
 && \quad  + \Big( \beta_{\mathcal A}(x,\xi,\omega) 
 \big( 1 - \xi(x) \big)  + \xi(x) \Big) 
\Big[ f(\sigma^x \xi,\omega) - f(\xi,\omega) \Big]\, \quad\mbox{ with }\\
\label{def:ratesCP-DRE}
\beta_{\mathcal A}(x,\xi,\omega)&=& \lambda_1 \sum\limits_{y \in {\mathcal A}\atop\|y-x\|=1} 
\xi(y)(1-\omega(y)) + \lambda_2 \sum\limits_{y \in {\mathcal A}\atop\|y-x\|=1} \xi(y)\omega(y)\, ,
\end{eqnarray}
  where $\|\cdot\|$ denotes the norm in $\mathbb R^d$, $\|u\|=\sqrt{\sum_{i=1}^d|u_i|^2}$
 for $u\in\mathbb R^d$, 
  and for $\zeta \in \Sigma_N$, $\sigma^x \zeta $ is the configuration obtained from
$\zeta$ by flipping the configuration at $x$, i.e.
\begin{equation*}
(\sigma^{x} \zeta) (z) := 
  \begin{cases}
        1-\zeta (x) & \textrm{ if \ } z=x\, ,\\
        \zeta (z) & \textrm{ if \ } z\neq x\, .
  \end{cases}
\end{equation*} 
The representation \eqref{eq:2.0} sheds light on the fact that \eqref{def:genCP-DRE}
corresponds to a contact process (the $\xi$-particles) in a dynamic random environment, namely
the $\omega$-particles. Indeed,  the $\omega$-particles
move on their own and are not influenced by $\xi$-particles, while 
$\xi$-particles have birth rates whose value depends on the presence or not of
$\omega$-particles. In \cite{k}  a variant of
the CPRS dynamics in a quenched 
random environment is also considered, with the $(\xi,\omega)$-formalism.
On the other hand, we noticed previously
that $\omega$-particles can also be considered as an
environment for the exchange dynamics.\\ \\
$\bullet$ We now turn to the dynamics at the boundaries of the domain. 
We denote respectively the closure and the boundary
of $\Lambda$  (see \eqref{def:La})  by 
\begin{equation}\label{def:Gamma}
\overline \Lambda = [-1,1]\times \mathbb R^{d-1},\qquad\Gamma
= \{(u_1,\dots , u_d)\in \overline \Lambda  : u_1 = \pm 1\}.
\end{equation} 
For a metric space $E$ and any integer $1\leq m \leq +\infty$, 
  denote by  $\mathcal C^m (\overline \Lambda; E)$
(resp. $\mathcal C^m_c (\Lambda; E)$) the space of
$m$-continuously differentiable functions on $\overline \Lambda$ 
(resp. with compact support in $\Lambda$) with values in $E$,
and by $\mathcal C (\overline \Lambda; E)$
(resp. $\mathcal C (\Lambda; E)$, $\mathcal C_c (\Lambda; E)$) the space of
continuous functions on $\overline \Lambda$ 
(resp. on $\Lambda$, with compact support in $\Lambda$) with values in $E$.\\ \\
Fix a positive function  $\widehat b = (b_1,b_2, b_3): \Gamma \to \bb R_+^3$. 
 Assume that there
exists a neighbourhood $V$ of $\overline\Lambda$  in $\bb R^d$  and a smooth function 
$\widehat\theta=(\theta_1,\theta_2,\theta_3) : V
\to (0,1)^3$ in   $\mathcal C^2(V;\mathbb R^3)$ such that  
\begin{equation}\label{eq:cC}
0<c^*\leq\min_{1\le i\le 3}|\theta_i|\leq\max_{1\le i\le 3}|\theta_i|\leq C^*< 1
\end{equation}
for two positive constants $c^*,C^*$, and such that the restriction 
of $\widehat\theta$ to $\Gamma$ is equal to $\widehat b$:
\begin{equation}\label{b-for-rev}
\widehat\theta (\cdot){\big\vert_{\Gamma}} =\; \widehat b (\cdot)\, .
\end{equation}
The boundary dynamics acts as a birth and death process on the boundary
 $\Gamma_N$ of $\Lambda_N$ (see \eqref{def:GammaN}) 
described by the generator $L_{\widehat b,N}$ defined by
\begin{eqnarray}\label{def:LbN}
L_{\widehat b,N} f(\xi,\omega)&  = & \sum_{x\in \Gamma_N}L_{\widehat b,N}^x f(\xi,\omega)\, ,
\quad\mbox{ where }\\\nonumber
L_{\widehat b,N}^x f(\xi,\omega)&  = & 
c_x\big(\widehat b(x/N), \xi,\sigma^x \omega\big)
 \Big[ f(\xi, \sigma^x \omega) - f(\xi, \omega) \Big]\\ \nonumber
& &  +  c_x\big(\widehat b(x/N), \sigma^x\xi, \omega\big)
\big[ f(\sigma^x\xi, \omega) - f(\xi, \omega) \big]\\ \label{def:LbN-x}
& &  +  c_x\big(\widehat b(x/N), \sigma^x\xi, \sigma^x\omega\big) 
\big[ f(\sigma^x\xi, \sigma^x\omega) - f(\xi, \omega) \big]\, ,
\end{eqnarray}
where the rates $c_x\big(\widehat b(x/N), \xi, \omega\big)$ are given 
for $x\in \Gamma_N$ and $(\xi,\omega)\in \widehat \Sigma_N$ by
\begin{eqnarray}\label{b_rates}
c_x\big(\widehat b(x/N), \xi, \omega \big) &=&
\sum_{i=0}^3   b_i(x/N)\eta_i(x)\, ,\\
 \text{ with }\quad b_0(x/N) &=& 1 - \sum_{i=1}^3b_i(x/N)\label{b0}
\end{eqnarray} 
where  
$\eta_i(x)$, $i\in\{0,1,2,3\}$ are defined in \eqref{omega}--\eqref{omega-compl}. 
In other words, according to $\widehat b(.)$, a site $x \in \Gamma_N$ 
goes from state $i\in\{0,1,2,3\}$ to state $j\in\{0,1,2,3\} \ (j\neq i)$ at rate $b_j(x/N)$ 
(see \eqref{Lb-alternative} below,  where this interpretation is used). \\ 
We have that by \eqref{b-for-rev},
the boundary dynamics is such that the measure 
$\nu_{\widehat \theta}^N$ is reversible with respect to the operator 
$L_{\widehat b ,N}$ (see Consequences \ref{csq:chg-var}\textit{(ii)}). 
 Note also that  the generator $L_{\widehat b,N}$  
has been speeded up by $N^2$ in \eqref{eq:gen}. 
%
\section{The results}\label{sec:results}
\subsection{Notation}\label{subsec:notation}
We fix $T>0$. 
We denote by $(\xi_t,\omega_t)_{t\in [0,T]}$ the Markov process on $\widehat \Sigma_N$ with generator
$\mf L_{N}$  defined in \eqref{eq:gen}.  Given a probability measure $\mu$ on $\widehat \Sigma_N$, 
the probability measure $\bb P^{N,\widehat b}_\mu$ on the path space 
$D([0,T], \widehat \Sigma_N)$, 
endowed with the Skorohod topology and the corresponding Borel $\sigma$-algebra, 
is the law of $(\xi_t,\omega_t)_{t\in [0,T]}$ with initial 
distribution  $\mu$. The  associated expectation
is  denoted by  $\bb E^{N,\widehat b}_\mu$.\\
 We denote by $(S_N^{\widehat b}(t))_{t\in [0,T]}$ the semigroup associated to the generator
$\mf L_N$. 
We shall denote by $\mu (t)$ the time evolution 
of the measure $\mu$ under the semigroup $S_N^{\widehat b}$:
$\mu (t) = \mu S_N^{\widehat b}(t) $.\\ 
We denote by $\mathcal M$ the space of finite signed measures on $\Lambda$,
endowed with the weak topology. For $m\in\mathcal M$ and a function 
$F\in\mathcal C (\Lambda; \mathbb R)$,
we let $\langle m, F\rangle$ be the integral of $F$ with respect to
$m$.  For each configuration $(\xi,\omega)\in\widehat \Sigma_N$, let
$\widehat\pi^N = \widehat \pi^N(\xi,\omega)=(\pi^{N,1},\pi^{N,2},\pi^{N,3})\in \mathcal M^3$, where for
$i\in\{1,2,3\}$, the positive measure $\pi^{N,i}$ is obtained by assigning mass $N^{-d}$ 
to each particle of type $i$  (cf. \eqref{omega}): 
\begin{equation*} 
\pi^{N,i} \, =\, N^{-d}\sum_{x \in \Lambda_N}\eta_i(x)\, \delta_{x/N}\; ,
\end{equation*}
where $\delta_{u}$ is the Dirac measure concentrated on $u$. 
For any  function 
$\widehat G = (G_1,G_2, G_3)\in \mathcal C(\Lambda;\mathbb R^3)$,
the integral of $\widehat G$ 
with respect to $\widehat \pi^N$, denoted by $\langle\widehat \pi^N, \widehat G\rangle $, is given by
\begin{equation*}
\langle\widehat \pi^N, \widehat G\rangle  =\, \sum_{i=1}^3 \langle\pi^{N,i}\, ,\, G_i \rangle\, .
\end{equation*}
We  also  denote by $\widehat \pi^N$   the map from $D([0,T],\widehat \Sigma_N)$  to 
 $D([0,T],\mathcal{M}^3)$  defined by 
$\widehat \pi^N (\xi_{\cdot},\omega_{\cdot})_t=  \widehat \pi^N (\xi_t,\omega_t)$ and 
we denote by  
$Q_{\mu}^{N,\widehat b}  =  \mathbb P^{N,\widehat b}_{\mu} \circ (\widehat \pi^N)^{-1} $ 
 the law of the process
$\big(\widehat \pi^N (\xi_t,\omega_t) \big)_{t\in [0,T]}$.\\
Let $\mathcal{M}_+^1$ be the subset of $\mathcal{M}$ of all positive measures
absolutely continuous  with respect to the Lebesgue measure
with positive density bounded by $1$:
\begin{equation*}
\mathcal{M}_+^1=\big\{\pi\in \mathcal {M}:\pi(du)=\rho(u)du \;\; \hbox{ and } \;\;
0\leq \rho(u)\leq 1\;  \hbox{ a.e.} \big\}\, .
\end{equation*}
Let $D([0,T],(\mathcal{M}_+^1)^3)$ be the set of right continuous  trajectories with left
limits with values in $(\mathcal{M}_+^1)^3$,  endowed with the Skorohod
topology and equipped with its Borel $\sigma-$ algebra. \\    
 For a metric space $E$, and  integers $1\leq m,k \leq +\infty$, we denote by  
$\mathcal C^{k,m}_c ([0,T]\times \overline \Lambda; E)$
(resp. $\mathcal C^{k,m}_{c,0} ([0,T]\times \overline \Lambda; E)$)
the space  of functions  from $[0,T]\times \overline\Lambda$ to $E$  
that are $k$-continuously differentiable in time and
$m$-continuously differentiable in space, with compact support in $[0,T]\times\overline \L$ (resp. 
and vanishing at the boundary $\Gamma$ of $\L$). Similarly, we define 
$\mathcal C^{k,m}_c ([0,T]\times \Lambda; E)$ to be the subspace of 
$\mathcal C^{k,m}_c ([0,T]\times \overline \Lambda; E)$
of functions with compact support in $[0,T]\times \L$. \\
For  $\widehat G = (G_1,G_2,G_3), \widehat H= (H_1,H_2,H_3) \in \big(L^2(\Lambda)\big)^3$,
$\langle \widehat G(\cdot),\widehat H(\cdot) \rangle$ is the scalar product:
$$
\langle \widehat G(\cdot), \widehat H(\cdot) \rangle 
= \sum\limits_{i=1}^3 \langle G_i (\cdot),H_i(\cdot) \rangle = \sum_{i=1}^3 \int_
\Lambda G_i(u)H_i(u) du \, .
$$ 
For a smooth function $G: [0,T]\times \L \to \R$, $\partial_s G(s,u)$ 
represents the partial derivative with respect to the time variable $s$ and
for $1\le j\le d,k\ge 1$, $\partial_{e_j}^k G(s,u)$ stands for the $k$-th partial derivative 
in the direction $e_j$ with respect to the space variable $u$. 
 The discrete gradient $\partial_{e_1}^N$ in the direction $e_1$ is defined, for
 $x,x+e_1 \in \Lambda_N$, $G:\Lambda \to \mathbb R$, by
$$
\partial_{e_1}^N G(x/N) = N\Big(G\Big(\frac{x+e_1}{N}\Big) - G\Big(\frac{x}{N}\Big)\Big)\, .
$$ 
The discrete Laplacian $\Delta_N$ and the Laplacian $\Delta$ are respectively defined for 
$G \in \mathcal C^2 (\Lambda ; \mathbb R)$, if $x,x\pm e_j\in\Lambda_N $ for $1\le j\le d$ and
 $u\in \Lambda\setminus \Gamma$, by 
\begin{align*}
& \Delta_N G(x/N) = N^2 \sum\limits_{j=1}^d \Big[ G\Big(\frac{x+e_j}{N}\Big)
 + G\Big(\frac{x-e_j}{N}\Big) - 2 G\Big(\frac{x}{N}\Big) \Big] \ \ \mbox{ and } \ \ 
 \Delta G (u)= \sum\limits_{j=1}^d \partial_{e_j}^2 G(u) .
\end{align*}
We have now all the material to state our results. 
\subsection{Hydrodynamics in infinite volume with reservoirs.}\label{subsec:hydro}
We first describe \textit{the hydrodynamic equations}. 
Let $\widehat \gamma = (\gamma_1,\gamma_2,\gamma_3) : \overline \Lambda \to [0,1]^3$ 
be a smooth initial profile, and denote by 
$\widehat \rho = (\rho_1,\rho_2,\rho_3) : [0,T] \times \overline \Lambda \to [0,1]^3$ 
a typical macroscopic trajectory.  We shall prove in Theorem 
 \ref{res:hydrodynamics} below that the macroscopic evolution of  the  local 
particle  density $\widehat \pi^N$  is described by  the following system of 
non-linear reaction-diffusion equations
\begin{equation}\label{f00}
\begin{cases}
\partial_t\widehat \rho  & = \;\;  \Delta \widehat\rho 
\,+\, \widehat F(\widehat \rho) \;\;\; \hbox {in } \Lambda \times (0,T) ,\\ 
\widehat \rho (0,\cdot)    & =  \;\; \widehat \gamma (\cdot)  \;\;\;  \hbox {in }  \Lambda ,\\
\widehat \rho(t,\cdot) |_\Gamma  & = \;\; \widehat b(\cdot) \;\;\;  \hbox{ for }\; 0\leq t\leq T \, ,
\end{cases}
\end{equation}
where $\widehat F=(F_1,F_2,F_3):[0,1]\to \mathbb R^3$ is given by
 \begin{equation}\label{eq:F}
\begin{cases}
F_1 (\rho_1,\rho_2,\rho_3) & = \;\;  2d (\lambda_1 \rho_1 + \lambda_2 \rho_3) \rho_0 
+ \rho_3 - \rho_1 (r+1)\, ,\\ 
F_2 (\rho_1,\rho_2,\rho_3)   & =  \;\; r \rho_0 + \rho_3 -2d (\lambda_1 \rho_1 
+ \lambda_2 \rho_3) \rho_2 - \rho_2  \, ,\\
F_3 (\rho_1,\rho_2,\rho_3)  & = \;\; 2d (\lambda_1 \rho_1 + \lambda_2 \rho_3) \rho_2 
+ r \rho_1 - 2 \rho_3   \, ,
\end{cases}
\end{equation}
where $\rho_0 = 1-\rho_1-\rho_2-\rho_3$. \\
\smallskip

\noindent
A weak solution of \eqref{f00} is a function 
$\widehat \rho(\cdot, \cdot ) =(\rho_1,\rho_2,\rho_3):[0,T]\times \Lambda\to [0,1]^3 $ 
satisfying (IB1), (IB2) and (IB3) below:
\begin{itemize} 
 \item[(IB1)] 
For any $i\in\{1,2,3\}$, $\rho_i \in L^\infty \left(( 0,T) \times \Lambda \right)$.
\item[(IB2)]   For any function $\widehat G(t,u)=\widehat G_t(u)=(G_{1,t}(u),G_{2,t}(u),G_{3,t}(u))$ in 
${\mathcal C}_{c,0}^{1,2}\big([0,T]\times \overline \Lambda ;\mathbb R^3\big)$,  
writing similarly the density  as $\widehat \rho_t(u) = \widehat \rho(t,u) $, we have
\begin{equation}
 \begin{aligned}
& \langle \widehat \rho_T(\cdot), \widehat G_T(\cdot)\rangle
-\langle \widehat \rho_0(\cdot), \widehat G_0(\cdot)\rangle -
\int_0^T ds \,  \langle \widehat \rho_s(\cdot),\partial_s \widehat G_s(\cdot)\rangle 
=\, \int_0^T ds \,  \langle \widehat \rho_s(\cdot),\Delta \widehat G_s(\cdot)\rangle \\
\, 
&\qquad \quad +\, \int_0^T ds \,  \langle \widehat F(\rho_s)(\cdot),\widehat G_s(\cdot)\rangle 
 -\; \sum_{i=1}^3 \int_0^T ds
\int_{\Gamma} \,  \text{\bf n}_1(r)\, b_i(r) (\partial_{e_1}G_{i,s})(r) \, dS(r) \, ,
 \end{aligned}
\end{equation}
where {\bf n}=$(\text{\bf n}_1,\ldots ,\text{\bf n}_d)$ stands for the
outward unit normal vector to the boundary surface $\Gamma$ and 
$\text{d} \text{S}$ for an element of surface on $\Gamma$. 
\item[(IB3)]  $\widehat \rho (0,u)= \widehat \gamma (u)$ a.e.
\end{itemize}
 This system of equations \eqref{f00} has a unique weak solution  (see Section \ref{sec:uniqueness}).
\medskip

\noindent
Our main result is:
\begin{theorem}\label{res:hydrodynamics}
 For each $N\ge 1$, let $\mu_N$ be a probability measure 
on $\widehat \Sigma_N$. 
The sequence of probability measures $(Q_{\mu_N}^{N,\widehat b})_{N\geq 1}$ 
is weakly relatively compact and all its converging
subsequences converge to some limit $Q^{\widehat b,*}$ which is concentrated 
on absolutely continuous paths  in $\mathcal C ([0,T],(\mathcal M_+^1)^3)$ whose density
$\widehat \rho$ satisfies (IB1) and (IB2).

Moreover, if for any $\delta >0$ and for  any  function 
 $\widehat G\in  \mathcal C_c(\overline \Lambda ;\mathbb R^3)$, 
\begin{equation}\label{cts_profile}
\lim_{N\to \infty}\mu_N\Big\{ \Big| \langle \widehat \pi_N(\xi,\omega), \widehat G(\cdot)\rangle\, -\, 
\langle \widehat \gamma (\cdot),\widehat G(\cdot)\rangle   \Big| \geq \delta \Big\}=0\, ,
\end{equation}
for an initial continuous profile $\widehat \gamma:\Lambda\to [0,1]^3$,  then
the sequence $(Q_{\mu_N}^{N,\widehat b})_{N\geq 1}$ 
converges to the Dirac measure  concentrated on the unique   weak  solution 
$\widehat \rho(\cdot,\cdot)$ of the boundary value 
problem \eqref{f00}. Accordingly, for any $t\in [0,T]$, 
any $\delta >0$ and any function 
$\widehat G \in {\mathcal C}_c^{1,2}\big([0,T]\times \overline \Lambda ;\mathbb R^3\big)$,
$$
\lim_{N\to \infty}\mathbb P_{\mu_N}^{N,\widehat b}\Big\{ \Big| 
\langle  \widehat \pi_N(\xi_t,\omega_t), \widehat G(\cdot)\rangle \, -\, 
\langle \widehat \rho_t(\cdot),\widehat G(\cdot)\rangle  
 \Big| \ge \delta \Big\}=0\, .
$$
\end{theorem}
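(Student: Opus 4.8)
The plan is to run the classical entropy/martingale method of \cite{gpv,kl} in this infinite-volume boundary-driven setting, combined with the couplings relating large finite volumes to the infinite one.

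First I would introduce, for $\widehat G\in\mathcal C^{1,2}_{c,0}([0,T]\times\overline\Lambda;\mathbb R^3)$, the Dynkin martingale
\[
M^{N,\widehat G}_t=\langle\widehat\pi^N_t,\widehat G_t\rangle-\langle\widehat\pi^N_0,\widehat G_0\rangle-\int_0^t\big(\partial_s+\mathfrak L_N\big)\langle\widehat\pi^N_s,\widehat G_s\rangle\,ds ,
\]
whose predictable quadratic variation is of order $N^{-d}$ times the size of the support of $\widehat G$, hence tends to $0$ in $L^2(\mathbb P^{N,\widehat b}_{\mu_N})$. Since all jump rates are bounded and $\Delta_N\widehat G$ is uniformly bounded, the drift term has bounded variation uniformly in $N$; as the marginals of $Q^{N,\widehat b}_{\mu_N}$ take values in the \emph{compact} space $(\mathcal M^1_+)^3$, an Aldous-type criterion applied to $t\mapsto\langle\widehat\pi^N_t,\widehat G_t\rangle$ for $\widehat G$ in a countable dense family then gives weak relative compactness of $(Q^{N,\widehat b}_{\mu_N})_{N\ge1}$ and shows that every limit point is supported on continuous paths, which are absolutely continuous with densities bounded by $1$, so that (IB1) holds.

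Next I would identify the limit points by passing to $N\to\infty$ in the martingale identity. The diffusive part $N^2\mathcal L_N\langle\widehat\pi^N_s,\widehat G_s\rangle$ equals $\langle\widehat\pi^N_s,\Delta_N\widehat G_s\rangle$ plus a discrete surface remainder located on the sites adjacent to $\Gamma_N$; since $\widehat G_s$ vanishes on $\Gamma$, a Taylor expansion turns this remainder into $-\sum_i N^{-d+1}\sum_x\eta_i(x)\,\partial_{e_1}G_{i,s}(x/N)$ over those sites, up to $o(1)$. The CPRS part $\mathbb L_N\langle\widehat\pi^N_s,\widehat G_s\rangle$ produces local functions of $(\xi,\omega)$ built from the birth rates $\beta_{\Lambda_N}$ and the $\eta_i$'s, which are rewritten as $\langle\widehat F(\widehat\rho_s),\widehat G_s\rangle$, with $\widehat F$ as in \eqref{eq:F}, via the one-block and two-block replacement lemmas, whose crucial inputs are the uniform-in-$N$ bounds on the specific entropy and the specific Dirichlet form from Section~\ref{sec:specific}. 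The boundary part $N^2L_{\widehat b,N}\langle\widehat\pi^N_s,\widehat G_s\rangle$ vanishes identically because $\widehat G_s\equiv0$ on $\Gamma$. What remains is to replace $\eta_i(x)$ near $\Gamma_N$ by the reservoir value $b_i$ in the surface remainder, so that it converges to $-\sum_i\int_0^Tds\int_\Gamma\mathbf n_1(r)\,b_i(r)\,(\partial_{e_1}G_{i,s})(r)\,dS(r)$; this boundary replacement, equivalently the boundary condition $\widehat\rho|_\Gamma=\widehat b$, is handled below. Combining these limits gives (IB2), while (IB3) follows from hypothesis \eqref{cts_profile} and the convergence of $\widehat\pi^N_0$.

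The boundary replacement is the main obstacle, and this is where the reduction to finite volume enters. I would first prove the hydrodynamic limit for the process restricted to the large finite cylinders $\Lambda_{N,n}$ (Theorem~\ref{bdd:hydrodynamics}), where the standard finite-volume arguments apply and where the reservoir dynamics — reversible with respect to $\nu^N_{\widehat\theta}$ by \eqref{b-for-rev} — forces, through a boundary entropy/Dirichlet estimate, the trace of the density on $\Gamma$ to equal $\widehat b$. Then I would couple the infinite-volume evolution with the finite-volume one on $\Lambda_{N,n}$, bounding the number of discrepancies in any fixed compact macroscopic region and letting $N\to\infty$ and then $n\to\infty$; this transfers both the bulk equation and the boundary replacement to infinite volume. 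Finally, having shown that every limit point is concentrated on weak solutions of \eqref{f00} with initial datum $\widehat\gamma$, I would invoke the uniqueness of such weak solutions proved in Section~\ref{sec:uniqueness} to conclude that all limit points coincide with the Dirac mass at the unique solution $\widehat\rho$; relative compactness then upgrades this to convergence of the whole sequence $(Q^{N,\widehat b}_{\mu_N})$, and the displayed fixed-time statement follows because $\widehat\rho\in\mathcal C([0,T],(\mathcal M^1_+)^3)$ makes $t\mapsto\widehat\rho_t$ continuous along $Q^{\widehat b,*}$-a.e.\ path.
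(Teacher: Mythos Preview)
Your overall architecture is the paper's: Dynkin martingale plus specific entropy/Dirichlet bounds for tightness and the bulk replacement, a separate argument for the boundary replacement done in finite volume, and a coupling to transfer the result to infinite volume, with uniqueness of weak solutions closing the argument. The parts you describe for tightness, (IB1), the bulk part of (IB2), and (IB3) match the paper essentially verbatim.

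The one genuine gap is in how you handle the finite-volume reduction. You propose to work on $\Lambda_{N,n}$ with $n$ fixed, take $N\to\infty$, and then let $n\to\infty$. The paper does \emph{not} use a double limit: it fixes a single diagonal scale $M_N=N^{1+1/d}$ and proves the full hydrodynamic limit (including the boundary replacement, Proposition~\ref{bddinf2}) for the process on $\Lambda_{N,M_N}$, then couples this process to the infinite-volume one (Proposition~\ref{bddinf3}). The choice of $M_N$ is not cosmetic. In the boundary replacement the Feynman--Kac/exponential Chebyshev argument produces an error term of order $|\Lambda_{N,M_N}|/N^{d+1}$ (see \eqref{eq:last-3.3}), which vanishes precisely because $M_N=N^{1+1/d}$; and in the coupling the discrepancies coming from the transverse boundary of $\Lambda_{N,M_N}$ are controlled by an exponentially weighted sum (Section~\ref{sec:hydro_infini}) whose estimate again relies on $M_N/N\to\infty$ at this specific rate. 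With your ``$N\to\infty$ then $n\to\infty$'' scheme, either the macroscopic domain degenerates (if $n$ is microscopic) or you acquire extra Neumann-type boundaries at the transverse faces (if $n$ is macroscopic), and you would then need an additional PDE comparison argument that you do not mention. Replacing the double limit by the single growing scale $M_N=N^{1+1/d}$ repairs the argument and brings it in line with the paper.
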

\noindent
The proof of Theorem \ref{res:hydrodynamics} will be outlined  in Section \ref{sec:hydrodynamics}, 
and done in the following Sections \ref{sec:specific}, \ref{sec:hydro_fini}, \ref{sec:hydro_infini},
\ref{sec:uniqueness}.
%
  %
 \subsection{Hydrodynamics in $\mathbb Z^d$.}\label{subsec:hydro-Zd} 
Our approach enables us  to derive as well the hydrodynamic limit in the  full 
volume $\mathbb Z^d$.
There, the reaction-diffusion process $(\xi_t,\omega_t)_{t \in[0,T]}$ 
on $\big(\{0,1\}\times \{0,1\}\big)^{\mathbb Z^d}$ has generator
 $$N^2 \mathcal L + \mathbb L=  N^2 
 \sum\limits_{k=1}^d \sum_{x\in\mathbb Z^d} 
\mathcal{L}^{x,x+e_k}
+ \sum\limits_{x \in\mathbb Z^d} \mathbb L_{\mathbb Z^d}^x $$
(see \eqref{def:genExch-x}, \eqref{def:genCP-DRE-x}--\eqref{def:ratesCP-DRE}) 
with law ${\mathbb P}_{\mu_N}^N$ when the initial
distribution is $\mu_N$. It satisfies  the following theorem, proved  in \cite{phd}. 
\begin{theorem}\label{infvol:hydrodynamics}
Consider a sequence of probability measures $(\mu_N)_{N \geq 1}$ on  
$\big(\{0,1\}\times \{0,1\}\big)^{\mathbb Z^d}$ associated to a continuous profile
 $\widehat \gamma : \mathbb R^d \rightarrow [0,1]^3$, that is, for  any
 function $\widehat G= (G_1,G_2,G_3) \in \mathcal C_c (\mathbb R^d ; \mathbb R^3)$,
 %
$$\lim_{N\to \infty}\mu_N\Big\{ \Big| \langle \widehat \pi_N(\xi,\omega), \widehat G(\cdot)\rangle\, -\, 
\langle \widehat \gamma (\cdot),\widehat G(\cdot)\rangle   \Big| \geq \delta \Big\}=0\, ,$$
 %
for all $\delta >0$. 
Then for any $t\in [0,T]$, for  any function 
$\widehat G= (G_1,G_2,G_3) \in \mathcal C_c ( \mathbb R^d ; \mathbb R^3)$,
$$\lim\limits_{N \rightarrow \infty}
 {\mathbb P}_{\mu_N}^N \Big\{ \Big| 
\langle \widehat \pi_N(\xi_t,\omega_t), \widehat G(\cdot)\rangle\, -\, 
\langle \widehat \rho_t(\cdot),\widehat G(\cdot)\rangle  
 \Big| \ge \delta \Big\}=0$$
for all  $\delta >0$,  where $\widehat \rho(t,u)$ is the unique weak solution of the system 
$$
\left\{ 
\begin{array}{ll}
\partial_t \widehat \rho = \Delta \widehat \rho 
+ \widehat F (\widehat \rho) \ \mbox{ in } \ \mathbb Z^d \times (0,T), \\
\widehat \rho(0,\cdot) = \widehat \gamma (\cdot) \ \mbox{ in }  \ \mathbb Z^d,
\end{array}
\right. 
$$
that is,  $\widehat \rho(\cdot, \cdot )$ satisfies the following assertions:
\begin{itemize}
\item[(I1)] For any $i\in\{1,2,3\}$, $\rho_i \in L^\infty \big([0,T] \times \mathbb R^d \big)$,
\item[(I2)] For any function $\widehat G_t(u) = \widehat G(t,u)$ 
in  ${\mathcal C}_{c}^{1,2}\big([0,T]\times \mathbb R^d ;\mathbb R^3\big)$, we have 
$$
 \begin{multlined}
 \langle \widehat \rho_T(\cdot), \widehat G_T(\cdot)\rangle
-\langle \widehat \rho_0(\cdot), \widehat G_0(\cdot)\rangle -
\int_0^T ds \,  \langle \widehat \rho_s(\cdot),\partial_s \widehat G_s(\cdot)\rangle \\
=\, \int_0^T ds \,  \langle \widehat \rho_s(\cdot),\Delta \widehat G_s(\cdot)\rangle 
\, +\, \int_0^T ds \,  \langle \widehat F(\rho_s)(\cdot),\widehat G_s(\cdot)\rangle \, ,
 \end{multlined}
$$
\item[(I3)]  $\widehat \rho (0,u)= \widehat \gamma (u)$ a.e.
\end{itemize}
\end{theorem}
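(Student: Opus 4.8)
The plan is to follow the scheme of the proof of Theorem~\ref{res:hydrodynamics}, which becomes substantially lighter here since there is no boundary generator $L_{\widehat b,N}$, hence no reservoir, and no boundary term in the weak formulation~(I2). First I would fix $\widehat G\in\mathcal C^{1,2}_c([0,T]\times\mathbb R^d;\mathbb R^3)$ — compact support in space, so that all the sums below are finite — and write Dynkin's formula: for each $i\in\{1,2,3\}$,
\[
M^{N,i}_t \;=\; \langle\pi^{N,i}_t,G_{i,t}\rangle-\langle\pi^{N,i}_0,G_{i,0}\rangle-\int_0^t\!\big(\partial_s+N^2\mathcal L+\mathbb L\big)\langle\pi^{N,i}_s,G_{i,s}\rangle\,ds
\]
is a mean-zero $\mathbb P^N_{\mu_N}$-martingale. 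The exchange part produces $\langle\pi^{N,i}_s,\Delta_N G_{i,s}\rangle$, while the CPRS part produces a time--space average $N^{-d}\sum_x G_{i,s}(x/N)\,\tau_x h(\xi_s,\omega_s)$ of translates of the bounded local functions appearing in~\eqref{def:genCP-DRE-x}--\eqref{def:ratesCP-DRE}; both are bounded uniformly in $N$ since $\widehat G$ is fixed with compact support and all jump rates are bounded. A Doob inequality then gives $\mathbb E\big[(M^{N,i}_t)^2\big]\to0$ (the predictable bracket being $O(N^{-d})$), and an Aldous-type criterion yields tightness of $(\mathbb P^N_{\mu_N}\circ(\widehat\pi^N)^{-1})$ in $D([0,T],(\mathcal M^1_+)^3)$, the fixed-time marginals being automatically tight because $(\mathcal M^1_+)^3$ is weakly compact; any limit point is concentrated on paths with densities in $[0,1]^3$, using $0\le\eta_i\le1$ and $\sum_i\eta_i\le1$.

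Next I would identify the limit points. Since $\Delta_N G_{i,s}\to\Delta G_{i,s}$ uniformly, the exchange term converges to $\int_0^t\langle\rho^i_s,\Delta G_{i,s}\rangle\,ds$. For the CPRS term the key input is the replacement lemma: each $\tau_x h$ is replaced, first by the corresponding function of the empirical densities $\big(\eta^{\varepsilon N}_j(x)\big)_j$ averaged over a box of macroscopic side $\varepsilon$, and then — letting $\varepsilon\to0$ — by $\widehat F(\widehat\rho_s)$ as in~\eqref{eq:F} (the factor $2d$ being the number of nearest neighbours). This rests on the one- and two-blocks estimates, which follow from the bound on the specific entropy of $\mu_N$, and hence of the law at time $t$ (by convexity and the entropy-production inequality), with respect to a constant-parameter product measure $\nu^N_{\widehat\rho}$, together with the associated specific Dirichlet-form bound — exactly the content of Section~\ref{sec:specific}, which applies verbatim on $\mathbb Z^d$ with no boundary contribution. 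Combined with $M^{N,i}_t\to0$ in $L^2$, this shows that any limit point is supported on trajectories satisfying~(I2); (I1) is the $L^\infty$ bound above, and (I3) follows from the hypothesis on $(\mu_N)$.

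Then I would establish uniqueness of the bounded weak solution of $\partial_t\widehat\rho=\Delta\widehat\rho+\widehat F(\widehat\rho)$ on $\mathbb R^d$ with datum $\widehat\gamma$. Since the solution takes values in $[0,1]^3$ and $\widehat F$ is a polynomial, hence Lipschitz on $[0,1]^3$, representing a weak solution as a mild solution via the heat semigroup $e^{t\Delta}$ on $\mathbb R^d$ (Duhamel), or estimating the difference of two solutions in a weighted $L^2(\mathbb R^d)$ to tame the behaviour at infinity, a Gronwall argument gives uniqueness on $[0,T]$ — the argument of Section~\ref{sec:uniqueness} with the boundary integral simply deleted. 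Uniqueness, tightness and the identification of limit points together upgrade convergence in law to convergence to the Dirac mass on $\widehat\rho$, which is the claimed law of large numbers.

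The hard part will be running the replacement lemma \emph{directly} in infinite volume, where no finite-volume energy bound is available and one must instead rely on the specific entropy of $\mu_N$ being $O(N^d)$ uniformly in $N$, and on the corresponding specific Dirichlet-form estimate. An appealing alternative — parallel to Section~\ref{sec:hydro_infini} — would be to deduce Theorem~\ref{infvol:hydrodynamics} from the hydrodynamics in a large finite box on $\mathbb Z^d$ by a coupling: run the $\mathbb Z^d$ dynamics and the dynamics on $\{-LN,\dots,LN\}^d$ (without reservoirs) with the same Poisson clocks, and observe that, for a space-compactly-supported test function, the two empirical measures stay close on $[0,T]$ up to an error vanishing as $L\to\infty$, since after the diffusive rescaling the influence of sites at microscopic distance $\gg N\sqrt T$ (in particular, of any boundary) on a fixed bounded region has Gaussian tails, and the CPRS part has finite range. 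Either way the delicate point is the passage from finite to infinite volume while keeping the reaction terms closed; everything else is a routine adaptation of~\cite{kl} and of Sections~\ref{sec:specific}--\ref{sec:uniqueness}.
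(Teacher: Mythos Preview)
Your proposal is correct and matches the paper's approach. The paper does not actually prove Theorem~\ref{infvol:hydrodynamics} in the text --- it states that ``both proofs of Theorem~\ref{infvol:hydrodynamics} and Theorem~\ref{bdd:hydrodynamics} are given in~\cite{phd}'' --- but it makes clear that the method is the one developed for Theorem~\ref{res:hydrodynamics}, and Remark~\ref{rem_bdd} explicitly says that all ingredients except the boundary replacement (Proposition~\ref{bddinf2}) are stated and proved in infinite volume; on $\mathbb Z^d$ there is no boundary, so that obstruction disappears and the scheme you outline (martingale decomposition, tightness, specific-entropy/Dirichlet-form bounds $\Rightarrow$ one- and two-block estimates $\Rightarrow$ replacement lemma, uniqueness via heat-kernel Duhamel and Gronwall) goes through directly.

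Two small remarks. First, in your coupling alternative you write $\{-LN,\dots,LN\}^d$ with $L\to\infty$ after $N\to\infty$; the paper instead takes a single diagonal $M_N=N^{1+1/d}$ and controls the discrepancy via the specific Dirichlet form of the \emph{coupled} process (Lemma~\ref{dirichlet-couple} and the Gronwall argument of Section~\ref{subsec:proof-th-hydro}), not via Gaussian tail estimates for random walk --- the latter heuristic is fine but would need to be made precise in the presence of the reaction part. Second, for uniqueness on $\mathbb R^d$ the adaptation of Section~\ref{sec:uniqueness} is indeed straightforward: one replaces the mixed kernel $p_1\cdot\check p$ by the full heat kernel on $\mathbb R^d$ and the argument leading to~\eqref{uniq30} carries over unchanged.
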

\smallskip
\subsection{Hydrodynamic limit in finite volume with reservoirs.}
As Theorem \ref{res:hydrodynamics} deals with an infinite bulk, 
we are consequently able to derive the limit in a finite volume 
in contact with reservoirs. We denote
$B_N = \{-N,\cdots,N\}\times\mathbb T^{d-1}_N$, 
$B=(-1,1)\times\mathbb T^{d-1}$  and its closure 
$\overline B=[-1,1]\times\mathbb T^{d-1}$, 
 where $\mathbb T^{d}_N$ is the 
$d$-dimensional microscopic torus of length $N$  and $\mathbb T^d$ 
is the $d$-dimensional torus $[0,1)^d$.  
In finite volume with stochastic reservoirs, the reaction-diffusion 
process $(\xi_t,\omega_t)_{t \in[0,T]}$ on 
 $\widetilde{\Sigma}_N=\big(\{0,1\}\times \{0,1\}\big)^{B_N}$, 
with generator  
$N^2 \widetilde{\mathcal L}_N + \widetilde{\mathbb L}_N + N^2 \widetilde{L}_{\widehat b,N}$,
given by formulas \eqref{def:genExch},  \eqref{def:genCP-DRE}, \eqref{def:genCP-DRE-x},
\eqref{def:ratesCP-DRE}, \eqref{def:LbN}, where we replace 
${\Lambda}_N$ by  $B_N$,  and $\Gamma_N$ by 
$\widetilde{\Gamma}_N=\{-N,N\}\times\mathbb T^{d-1}_N$,
satisfies  the following theorem, proved in  \cite{phd}. 
\begin{theorem}\label{bdd:hydrodynamics} 
If for any $\delta >0$ and for  any  function 
 $\widehat G\in \mathcal C(\overline B ;\mathbb R^3)$,
$$
\lim_{N\to \infty}\mu_N\Big\{ \Big| \langle \widehat \pi_N(\xi,\omega), \widehat G(\cdot)\rangle\, -\, 
\langle \widehat \gamma (\cdot),\widehat G(\cdot)\rangle   \Big| \geq \delta \Big\}=0\, ,
$$
for an initial continuous profile $\widehat \gamma: B \to [0,1]^3$,  then
the sequence of probability measures $(Q_{\mu_N}^{N,\widehat b})_{N\geq 1}$ 
converges to the Dirac measure  concentrated on the unique  weak  solution 
$\widehat \rho(\cdot,\cdot)$ of the boundary value problem 
$$
\begin{cases}
\partial_t\widehat \rho  & = \;\;  \Delta \widehat\rho \,+\, \widehat F(\widehat \rho)\, 
\qquad \hbox {in } \qquad  B \times (0,T) ,\\ 
\widehat \rho(0,\cdot)    & =  \;\; \widehat \gamma (\cdot)  \,  \qquad \hbox {in } \qquad  B ,\\
\widehat \rho(t,\cdot)|_\Gamma  & = \;\; \widehat b(\cdot) \;\;\; 
\hbox{ for }\; 0\leq t\leq T \, ,
\end{cases}
$$
that is, $\widehat \rho(\cdot, \cdot ):[0,T]\times B \to [0,1]^3 $ satisfies
\begin{itemize}
 \item[(B1)] For any $i\in\{1,2,3\}$, $\rho_i \in L^2 \left(( 0,T) ; H^1(B)\right)$,
  that is, $\rho_i$ admits a generalised derivative
 such that,  if  $\nabla \rho_i =(\partial_{e_1}\rho_i,\cdots ,\partial_{e_d}\rho_i)$ 
stands for the gradient of $\rho_i$, 
$$\int_0^T d s\Big(  \int_B  {\parallel\nabla \rho_i(s,u)\parallel}^2 
du \Big)<\infty \, . $$
\item[(B2)]  For any function $\widehat G_t(u) = \widehat G(t,u) \in {\mathcal C}_{0}^{1,2}\big([0,T]
\times \overline B ;\mathbb R^3\big)$, we have
$$
 \begin{aligned}
& \langle \widehat \rho_T(\cdot), \widehat G_T(\cdot)\rangle
-\langle \widehat \rho_0(\cdot), \widehat G_0(\cdot)\rangle -
\int_0^T ds \,  \langle \widehat \rho_s(\cdot),\partial_s \widehat G_s(\cdot)\rangle
=\, \int_0^T ds \,  \langle \widehat \rho_s(\cdot),\Delta \widehat G_s(\cdot)\rangle 
 \\
&\qquad \quad
\, +\, \int_0^T ds \,  \langle \widehat F(\rho_s)(\cdot),\widehat G_s(\cdot)\rangle
 -\; \sum_{i=1}^3 \int_0^T ds
 \int_{\widetilde\Gamma} \,  \text{\bf n}_1(r)\, b_i(r) (\partial_{e_1}G_{i,s})(r) \, dS(r) \, ,
 \end{aligned}
$$
where  $\widetilde\Gamma$  denotes here  the boundary of $B$.
\item[(B3)]  $\widehat \rho (0,u)= \widehat \gamma (u)$ a.e.
\end{itemize}
\end{theorem}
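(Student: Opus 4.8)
The plan is to deduce Theorem \ref{bdd:hydrodynamics} from Theorem \ref{res:hydrodynamics} by a periodization argument: the finite cylinder $B_N=\{-N,\dots,N\}\times\mathbb T^{d-1}_N$ is the quotient of the infinite cylinder $\Lambda_N=\{-N,\dots,N\}\times\mathbb Z^{d-1}$ by the group $\mathbb G_N:=(N\mathbb Z)^{d-1}$ of transverse shifts, and a configuration on $\widetilde\Lambda_N=(\{0,1\}\times\{0,1\})^{B_N}$ is nothing but a $\mathbb G_N$-periodic element of $\widehat\Sigma_N$. First I would fix a fundamental domain $K_N=\{-N,\dots,N\}\times\{0,\dots,N-1\}^{d-1}$ and, for a $\mathbb G_N$-periodic $(\xi,\omega)\in\widehat\Sigma_N$, denote by $(\xi,\omega)|_{K_N}$ its restriction, identified with an element of $\widetilde\Lambda_N$. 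Since the rates in $\mathcal L_N$, $\mathbb L_N$, $L_{\widehat b,N}$ are bounded, of finite range and $\mathbb G_N$-covariant, the infinite-volume process generated by $\mathfrak L_N$ (whose well-posedness belongs to the setting of Theorem \ref{res:hydrodynamics}), started from a $\mathbb G_N$-periodic initial law, stays $\mathbb G_N$-periodic at all times, and its image under $(\xi,\omega)\mapsto(\xi,\omega)|_{K_N}$ is exactly the $\big(N^2\widetilde{\mathcal L}_N+\widetilde{\mathbb L}_N+N^2\widetilde{L}_{\widehat b,N}\big)$-process on $\widetilde\Lambda_N$; this is checked by evaluating $\mathfrak L_N$ on functions that depend on $(\xi,\omega)$ only through $(\xi,\omega)|_{K_N}$, using periodicity to fold the transverse bonds and boundary sites onto the torus. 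The boundary function $\widehat b$ and its $\mathcal C^2$ extension $\widehat\theta$ are taken transversely periodic, so that \eqref{eq:cC} and \eqref{b-for-rev} pass to $\Lambda$.

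Next I would transport the initial condition. Given $\mu_N$ on $\widetilde\Lambda_N$ associated to a continuous profile $\widehat\gamma:B\to[0,1]^3$ (which, being defined on the cylinder over the torus, is automatically periodic in the transverse variables), let $\widehat\gamma^{\mathrm{per}}:\Lambda\to[0,1]^3$ be its transverse-periodic extension, continuous on $\overline\Lambda$, and let $\mu_N^{\mathrm{per}}$ be the $\mathbb G_N$-periodic measure on $\widehat\Sigma_N$ lifting $\mu_N$. The folding identity
$$
\langle\widehat\pi^N(\xi,\omega),\widehat G\rangle
=\langle\widehat\pi^N\big((\xi,\omega)|_{K_N}\big),\widehat G^{\mathrm{per}}\rangle,
\qquad \widehat G^{\mathrm{per}}(u)=\sum_{k\in\mathbb Z^{d-1}}\widehat G(u+k),
$$
valid for $\mathbb G_N$-periodic $(\xi,\omega)$ and any $\widehat G\in\mathcal C_c(\overline\Lambda;\mathbb R^3)$ (and, conversely, $\langle\widehat\pi^N\big((\xi,\omega)|_{K_N}\big),\widehat H\rangle=\langle\widehat\pi^N(\xi,\omega),\phi\,\widehat H^{\mathrm{per}}\rangle$ for $\widehat H\in\mathcal C(\overline B;\mathbb R^3)$ and a continuous, transversely compactly supported $\phi$ with $\sum_k\phi(\cdot-k)\equiv 1$) shows that the hypothesis of Theorem \ref{bdd:hydrodynamics} on $\mu_N$ is equivalent to hypothesis \eqref{cts_profile} of Theorem \ref{res:hydrodynamics} for $(\mu_N^{\mathrm{per}})_N$ relative to $\widehat\gamma^{\mathrm{per}}$, and that empirical-measure statements on $B$ and on $\Lambda$ are interchangeable; the same holds for $\mathcal C^{1,2}$ test functions and along trajectories.

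Then I would apply Theorem \ref{res:hydrodynamics} to $(\mu_N^{\mathrm{per}})_N$: the laws $Q^{N,\widehat b}_{\mu_N^{\mathrm{per}}}$ converge to the Dirac mass on the unique weak solution $\widehat\rho^{\mathrm{per}}$ of \eqref{f00} with data $\widehat b$ and $\widehat\gamma^{\mathrm{per}}$. Since \eqref{f00} commutes with the translations $u\mapsto u+k$, $k\in\mathbb Z^{d-1}$, and by the uniqueness in Theorem \ref{res:hydrodynamics}, $\widehat\rho^{\mathrm{per}}$ is transverse periodic and descends to a map $\widehat\rho:[0,T]\times B\to[0,1]^3$; testing (IB1)--(IB2) against transverse periodizations of test functions on $\overline B$ and using the folding identities, $\widehat\rho$ satisfies (B2) — the Dirichlet boundary integral over $\{u_1=\pm1\}\times\mathbb R^{d-1}$ folds onto the integral over $\{u_1=\pm1\}\times\mathbb T^{d-1}$, with no contribution from the torus directions since these carry no boundary — and (B3), while (B1), namely $\rho_i\in L^2(0,T;H^1(B))$, follows from the local $H^1$ energy estimate accompanying the Dirichlet-form analysis of Section \ref{sec:specific}, applied to the bounded region $[-1,1]\times[0,1)^{d-1}$. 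Uniqueness of the weak solution of the $B$-problem follows from the argument of Section \ref{sec:uniqueness} applied to the transverse-periodic lift on $\Lambda$, which lies in the admissible class, being bounded. Finally, transporting the convergence of $Q^{N,\widehat b}_{\mu_N^{\mathrm{per}}}$ through $(\xi,\omega)\mapsto(\xi,\omega)|_{K_N}$ via the folding identities — with the relative compactness of the $B$-empirical measures obtained by restriction from Theorem \ref{res:hydrodynamics} (and anyway immediate on the compact path space of triples of sub-probability measures on $\overline B$) — yields the convergence of $(Q^{N,\widehat b}_{\mu_N})_{N\ge 1}$ to the Dirac mass on $\widehat\rho$, whence, exactly as in Theorem \ref{res:hydrodynamics}, the stated convergence in probability.

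I expect the only genuinely delicate step to be the first one: the identification of the $\widetilde{\mathfrak L}_N$-process on $B_N$ with the quotient by $\mathbb G_N$ of the $\mathfrak L_N$-process on $\Lambda_N$ started from a periodic law, which rests on the well-posedness of the infinite-volume dynamics used throughout the proof of Theorem \ref{res:hydrodynamics} together with its covariance under transverse translations. One must also check that the local $H^1$ energy estimate and the uniqueness argument of Section \ref{sec:uniqueness} survive the passage to the periodic (equivalently, bounded-cylinder) setting; both do, the former because a transverse period is a bounded region, the latter because the admissible class of weak solutions requires only $L^\infty$ control. Everything else is the routine translation of the ``infinite-cylinder'' dictionary — empirical measures, weak formulations, boundary integrals, uniqueness — into the ``cylinder-over-the-torus'' one.
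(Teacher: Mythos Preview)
The paper defers the proof to \cite{phd} and gives no details, but the line ``we are consequently able to derive the limit in a finite volume'' suggests the thesis argues directly on $B_N$: in finite volume the ordinary (not specific) entropy and Dirichlet form suffice, so the Guo--Papanicolaou--Varadhan scheme (tightness, bulk and boundary replacement, energy estimate, uniqueness) runs with fewer complications than in Sections \ref{sec:specific}--\ref{sec:hydro_infini}---no specific entropy, no $M_N$-truncation, no coupling. Your periodization route is genuinely different: you lift to $\mathbb G_N$-periodic data on $\Lambda_N$, invoke Theorem \ref{res:hydrodynamics} as a black box, and fold back. The dynamical identification (covariance of all three pieces of $\mathfrak L_N$ under transverse shifts, with $\widehat b$ lifted periodically), the folding of (IB2)--(IB3) into (B2)--(B3) via the partition-of-unity computation, and the reduction of uniqueness on $B$ to the $L^\infty$-uniqueness of Section \ref{sec:uniqueness} via the periodic lift are all correct. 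This buys you Theorem \ref{bdd:hydrodynamics} as an honest corollary of the harder infinite-volume result, at the price of invoking machinery that is unnecessary in finite volume.

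There is, however, a real gap at (B1). Theorem \ref{res:hydrodynamics} only asserts (IB1), namely $\rho_i^{\mathrm{per}}\in L^\infty$, and nothing in the paper upgrades this to $H^1$: Section \ref{sec:specific} proves the \emph{microscopic} specific-Dirichlet-form bound of Theorem \ref{dirichlet}, but the passage to a macroscopic energy estimate $\int_0^T\!\int_B\|\nabla\rho_i\|^2\,du\,ds<\infty$ on limit points requires the standard variational argument (cf.\ \cite{kl}, Chapter 5), which is nowhere carried out here. Your sentence ``(B1) follows from the local $H^1$ energy estimate accompanying the Dirichlet-form analysis of Section \ref{sec:specific}'' asserts something the paper does not contain. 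To fill this you must either supply that variational step on $\Lambda$, or observe that for periodic data the specific Dirichlet form restricted to one period is the ordinary Dirichlet form on $B_N$ and then run the finite-volume energy estimate directly---but that last step is precisely the piece of the direct proof you were hoping to sidestep.
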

{}From now on, we are back to the set-up of Theorem \ref{res:hydrodynamics}. 
\subsection{Currents.}\label{subsec:pres-currents}
In this subsection,  we study the evolution of the empirical currents, 
namely the conservative one (cf. \cite{BDGJL06}) and the non-conservative one (cf. \cite{BL12}).
For $t\ge 0$, $1\le i\le 3$ and any $y,z \in \Lambda_N$ such that $\|y-z\|=1$, 
 denote by $J^{y,z}_t(\eta_i)$ the total number of particles of type $i$ that
jumped from $y$ to $z$ before time $t$ and by 
$$W^{x,x+e_j}_t(\eta_i) = J^{x,x+e_j}_t(\eta_i)  - J^{x+e_j,x}_t(\eta_i), \ 1\le j\le d $$
the conservative current of particles of type $i$ across the bond $\{x,x+e_j\}$ before time $t$.
The corresponding conservative empirical measure $\mathbb W_t^N$ is the product finite signed measure on 
$\Lambda_N$ defined as  $\mathbb W_t^N(\eta_i)=(W^{N}_{1,t}(\eta_i), \dots, 
W^{N}_{d,t}(\eta_i)) \in \mathcal M^d$, where for $1\le j\le d$, $1\le i\le 3$,
$$
W^{N}_{j,t}(\eta_i) \;=\; N^{-(d+1)} \sum_{x, x+e_j\in\Lambda_N} 
W^{x,x+e_j}_t(\eta_i) \delta_{x/N}\, .
$$
For a continuous vector field   $\mathbf G = (G_1, \dots, G_d) \in \mathcal C_c (\Lambda;\bb
R^d)$ the integral of $\mathbf G$ with respect to $\mathbb W^N_t(\eta_i)$, also
denoted by $\langle \mathbb W^N_t(\eta_i), \mathbf G\rangle $, is given by
\begin{equation}\label{empcurr}
\langle \mathbb W^N_t (\eta_i), \mathbf G\rangle 
\;=\; \sum_{j=1}^d \langle W^N_{j,t}(\eta_i) , G_j\rangle\, .
\end{equation}
  For $x\in\Lambda_N$, $1\le i\le 3$, we denote by $Q_t^x(\eta_i)$  
  the total number of particles of type $i$ created 
minus the total number of particles of type $i$ annihilated at site $x$ before time $t$. 
The corresponding non-conservative empirical measure is
$$
Q^N_t(\eta_i) =\dfrac{1}{N^d}  \sum_{x\in \Lambda_N}Q_t^x(\eta_i)\delta_{x/N}.
$$
  We can now state the law of large numbers for the current: 
\begin{proposition}\label{res:lln_currents}
Fix a smooth initial profile $\widehat \gamma :\Lambda \to [0,1]^3$.
Let $(\mu_N)_{N \geq 1}$ be a sequence of probability measures on $\widehat \Sigma_N$
satisfying \eqref{cts_profile} and $\widehat\rho$ be the weak solution of the system of equations 
\eqref{f00}. Then, for each $T>0$, $\delta>0$,   
 $\widehat {\mathbf G}= ({\mathbf G}_1, {\mathbf G}_2, {\mathbf G}_3)
  \in \mathcal C^1_c(\Lambda;(\mathbb R^d)^3)$ 
and $\widehat H= (H_1, H_2, H_3) \in \mathcal C^1_c(\Lambda;\mathbb R^3)$, 
\begin{align}
&\lim_{N\to\infty} \mathbb P_{\mu_N}^{N,\widehat b} \Big[\, \Big| 
\sum_{i=1}^3 \langle \mathbb W^N_t (\eta_i), \mb G_i\rangle
\;-\; \int_0^T dt\, \big\langle \big\{-\nabla\widehat {\rho}_t \big\}
\, , \,\widehat{\mb G}\big\rangle \Big\vert > \delta \Big ] \;=\; 0\,  \label{res:lln_currents_1} ,\\
&\lim_{N\to\infty} \mathbb P_{\mu_N}^{N,\widehat b} 
\Big[\, \Big| 
\sum_{i=1}^3 \langle Q^N_t (\eta_i), H_i\rangle
\;-\; \int_0^T dt\, \big\langle   \widehat F(\widehat {\rho}_t)
\, , \,\widehat H\big\rangle \Big| > \delta \Big ] \;=\; 0\, . \label{res:lln_currents_2} 
\end{align}
\end{proposition}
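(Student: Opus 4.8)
The plan is to derive the law of large numbers for the currents directly from the hydrodynamic limit established in Theorem~\ref{res:hydrodynamics}, following the strategy of \cite{BDGJL06,BL12}. The starting point is the observation that both empirical currents can be expressed, via Dynkin's formula, as martingales plus explicit additive functionals of the trajectory $(\xi_t,\omega_t)_{t\in[0,T]}$. Concretely, for $1\le i\le 3$ and $\mathbf G_i\in\mathcal C^1_c(\Lambda;\mathbb R^d)$, the process
\[
M^{N,i}_t \;=\; \langle \mathbb W^N_t(\eta_i),\mathbf G_i\rangle \;-\; \int_0^t \mathfrak L_N \langle \mathbb W^N_s(\eta_i),\mathbf G_i\rangle\, ds
\]
is a mean-zero martingale. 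The key point is that $\mathfrak L_N$ applied to the current observable produces, after the $N^2$ space rescaling, a discrete-gradient term acting on $\mathbf G_i$ tested against the empirical density $\pi^{N,i}$ (from the exchange part $N^2\mathcal L_N$), plus lower-order reaction contributions from $\mathbb L_N$ and boundary contributions from $N^2 L_{\widehat b,N}$; the latter vanish in the limit because $\mathbf G_i$ has compact support in the open cylinder $\Lambda$. Thus up to a martingale remainder,
\[
\langle \mathbb W^N_T(\eta_i),\mathbf G_i\rangle \;=\; \int_0^T ds\; \langle \pi^{N,i}_s,\,\operatorname{div}\mathbf G_i\rangle \;+\; o_N(1),
\]
and the analogous identity for $Q^N_t(\eta_i)$ yields $\langle Q^N_T(\eta_i),H_i\rangle = \int_0^T ds\,\langle \text{(reaction terms)},H_i\rangle + o_N(1)$, where the reaction terms are precisely the empirical analogues of $F_i(\widehat\rho)$ as identified in the proof of the hydrodynamic limit.

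The second step is to control the martingale term. I would compute its quadratic variation $\langle M^{N,i}\rangle_T$; for the conservative current across a bond it is of order $N^{2}\cdot N^{-2(d+1)}\cdot N^d = N^{-d}\to 0$ (the $N^2$ from the speeded-up exchange, the $N^{-2(d+1)}$ from the two factors of $N^{-(d+1)}$ in the definition of $W^N_{j,t}$, and $N^d$ from the number of bonds in the support of $\mathbf G_i$), so $M^{N,i}_T\to 0$ in $L^2$; similarly for the non-conservative current the quadratic variation is $O(N^{-d})$. By Doob's inequality this gives uniform convergence to zero on $[0,T]$.

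The third step is to pass to the limit in the additive functionals. Here I invoke Theorem~\ref{res:hydrodynamics}: under hypothesis \eqref{cts_profile}, the law $Q^{N,\widehat b}_{\mu_N}$ of $\widehat\pi^N$ converges to the Dirac mass on the unique weak solution $\widehat\rho$ of \eqref{f00}. Since $\int_0^T ds\,\langle\pi^{N,i}_s,\operatorname{div}\mathbf G_i\rangle$ is a bounded continuous functional of the trajectory $\widehat\pi^N$ (for fixed smooth compactly supported $\mathbf G_i$), the continuous mapping theorem gives convergence in probability to $\int_0^T ds\,\langle\rho_{i,s},\operatorname{div}\mathbf G_i\rangle = -\int_0^T ds\,\langle\nabla\rho_{i,s},\mathbf G_i\rangle$, the last equality being an integration by parts that is legitimate because $\widehat\rho$ enjoys the regularity $\rho_i\in L^2((0,T);H^1)$ furnished by uniqueness of weak solutions. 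For the non-conservative current, the reaction functional is likewise continuous in $\widehat\pi^N$ after a one-block/two-blocks replacement argument — but this replacement is exactly the one carried out in the proof of Theorem~\ref{res:hydrodynamics} to identify $\widehat F(\widehat\rho)$, so it can be quoted rather than redone — and its limit is $\int_0^T dt\,\langle\widehat F(\widehat\rho_t),\widehat H\rangle$. Summing over $i\in\{1,2,3\}$ yields \eqref{res:lln_currents_1} and \eqref{res:lln_currents_2}.

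The main obstacle is handling the boundary terms in $\mathfrak L_N$ applied to the current observables and making precise that they do not contribute. For the conservative current this is a genuine point: a priori the current carried across bonds near $\Gamma_N$ by the boundary birth/death mechanism could survive the rescaling. The resolution is that the test fields $\mathbf G_i$ and $H_i$ have compact support strictly inside $\Lambda$, so for $N$ large enough no bond or site in the support is affected by $L_{\widehat b,N}$, which is supported on $\Gamma_N$; hence those terms are identically zero for large $N$. A secondary technical point is that the empirical current measures live in the space of signed (not positive, not bounded) measures, so the weak relative compactness and the continuous-mapping arguments must be run in $D([0,T],(\mathcal M_d)^3)$ and $D([0,T],\mathcal M_3)$ respectively; but since we already know the limit of $\widehat\pi^N$ is deterministic, the current functionals inherit convergence in probability to a deterministic limit without needing a separate tightness argument for the currents themselves.
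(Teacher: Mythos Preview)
Your approach is essentially the same as the paper's: decompose the current observables as martingales plus compensators (the paper writes the jump martingales \eqref{mart-cur-cons-ij} and \eqref{mart-cur-non-cons-i} directly rather than via ``$\mathfrak L_N$ applied to the current''---note that $\langle\mathbb W^N_s(\eta_i),\mathbf G_i\rangle$ is not a function of the configuration, so your Dynkin formulation is a slight abuse), show the quadratic variations are $O(N^{-d})$, apply Doob, and invoke the hydrodynamic limit and the replacement lemma. One small correction: in the infinite-volume setting only (IB1), i.e.\ $\rho_i\in L^\infty$, is established, not $L^2((0,T);H^1)$; the identity $\int_0^T\langle\rho_{i,s},\operatorname{div}\mathbf G_i\rangle\,ds=-\int_0^T\langle\nabla\rho_{i,s},\mathbf G_i\rangle\,ds$ should therefore be read as the \emph{definition} of the distributional gradient paired with the compactly supported test field $\mathbf G_i$, not as a consequence of Sobolev regularity.
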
 
  We shall prove Proposition \ref{res:lln_currents}
in Section \ref{sec:currents}.
\section{Outline of the proof of Theorem \ref{res:hydrodynamics}}\label{sec:hydrodynamics}
The proof is divided essentially in two parts. In the
first one, we prove the hydrodynamic limit for the system evolving in a large finite volume.
There, by large we just mean  a volume of size $M_N$ such that $\lim_{N\to+\infty}M_N/N=+\infty$. 
In the second part, from the first one and coupling arguments, 
we shall derive the result in infinite volume. 
The coupling will allow us to prove that  by taking $M_N$ appropriately large enough,
particles outside the cylinder of length $M_N$ do not affect enough 
the number of particles in a box with length of order $KN$ for any 
$K>0$ (cf. Propositions \ref{bddinf2} and \ref{bddinf3} below). 
 For all these requirements on $M_N$ to be fulfilled we take 
\begin{equation}\label{def:MN}
M_N= N^{1+\frac{1}{d}}.
\end{equation}
\noindent 
$\bullet$ For the first part
of the proof of Theorem \ref{res:hydrodynamics}, we  consider a Markov process 
with  state space $\widehat\Sigma_{N,M_N}$ (cf. \eqref{Si-Nn})
and generator $\mf L_{N,M_N}$, where for any positive integer $n>1$,
$\mf L_{N,n}$ denotes the restriction of the generator $\mf L_{N}$
to the box $\Lambda_{N,n}$:  
\begin{equation}\label{def:gen-Nn}
\mf L_{N,n}=N^2\mathcal L_{N,n}\, +\, \mathbb L_{N,n}\, +\, N^2 L_{\widehat b,N,n}  \, ,
 \end{equation} 
 with,  for $\mathcal L^{x,x+e_k}$ defined in \eqref{def:genExch-x}, 
 $\mathbb L^{x}_{\Lambda_{N,n}}$ in
  \eqref{def:genCP-DRE-x},  and 
$L^{x}_{\widehat b,N}$ in \eqref{def:LbN-x}, 
\begin{equation}\label{def:gen-x}
\mathcal L_{N,n} =\sum\limits_{k=1}^d \sum_{x,x+e_k\in\Lambda_{N,n}}
\mathcal L^{x,x+e_k}\, ,\quad
\mathbb L_{N,n} =\sum_{x\in \Lambda_{N,n}}\mathbb L^{x}_{\Lambda_{N,n}}\, ,\quad
L_{\widehat b,N,n} = \sum_{x\in \Lambda_{N,n}\cap\Gamma_N} L^{x}_{\widehat b,N}\, .
 \end{equation} 
Observe that this finite volume dynamics 
can be seen as a dynamics $(\zeta_t,\chi_t)_{t\in [0,T]}$ evolving in the infinite volume $\Lambda_N$,
where transitions taking place outside $\Lambda_{N,M_N}$, or involving particles 
outside $\Lambda_{N,M_N}$, are suppressed. For the exchange part of the dynamics, it means that
particles outside $\Lambda_{N,M_N}$ do not move
and particles inside $\Lambda_{N,M_N}$ jump as in the original infinite volume
process in $\Lambda_N$, with the restriction that jumps off $\Lambda_{N,M_N}$ are suppressed.
For the CPRS part, it means that 
transitions outside $\Lambda_{N,M_N}$, as well as births in $\Lambda_{N,M_N}$
induced by particles outside $\Lambda_{N,M_N}$, are suppressed. For the boundary dynamics part,
it means that transitions outside $\Lambda_{N,M_N}$ are suppressed.
By abuse of language, we still denote the generator of 
$(\zeta_t,\chi_t)_{t\in [0,T]}$ by $\mf L_{N,M_N}$.
Given a probability measure $\mu_N$ on $\widehat \Sigma_N$,
we denote by ${\widetilde {\mathbb P}}_{\mu_N}^{M_N,\widehat b}$ 
the law of the process $(\zeta_t,\chi_t)_{t\in [0,T]}$ with initial distribution $\mu_N$, by
${\widetilde {\mathbb E}}_{\mu_N}^{M_N,\widehat b}$ the corresponding  expectation, and by
$\widetilde {Q}_{\mu_N}^{M_N,\widehat b}  =  
{\widetilde {\mathbb P}}^{M_N,\widehat b}_{\mu_N} \circ (\widehat \pi^N)^{-1} $ 
  the law of the process
$\big(\widehat \pi^N (\zeta_t,\chi_t) \big)_{t\in [0,T]}$. 
 We define $(\widetilde \eta_1,\widetilde \eta_2,\widetilde \eta_3)$ then
$\widetilde \eta_0$ associated to $(\zeta,\chi)$ as we did for
 $(\eta_1,\eta_2,\eta_3)$ and $\eta_0$ w.r.t. $(\xi,\omega)$ 
in \eqref{omega}--\eqref{omega-compl}.
\smallskip

\noindent
Following the Guo, Papanicolaou and Varadhan method \cite{gpv},
to derive the hydrodynamic behaviour of our system in large finite volume, 
we divide the proof into several steps.
 We first prove through the
 martingales associated to the process $(\zeta_t,\chi_t)_{t\in [0,T]}$
that the limit is the Dirac mass concentrated on the set of weak solutions to the system
of equations \eqref{f00}:
\begin{itemize}
 \item[{(1)}] tightness of the measures 
$(\widetilde {Q}_{\mu_N}^{M_N,\widehat b})_{N \geq 1}$ in $D([0,T],(\mathcal{M}_+^1)^3)$,
\item[{(2)}] identification of the limit points of $(\widetilde {Q}_{\mu_N}^{M_N,\widehat b})_{N \geq 1}$
 as  absolutely continuous paths whose density satisfies conditions (IB1) and (IB2).
\end{itemize}
Then
condition \eqref{cts_profile} implies (IB3), and we have to prove
\begin{itemize}
 \item[{(3)}] uniqueness of a weak solution to the hydrodynamic equations \eqref{f00},
\end{itemize} 
which allows to conclude that
the limit is the Dirac measure associated to the unique solution of  \eqref{f00}. 
\smallskip

\noindent
The proof of {(1)}  and of part of {(2)} is by now standard and left to the reader
(we refer to \cite{kl} for details).
We postpone the proof of {(3)} 
 to Section \ref{sec:uniqueness}. As $M_N\gg N$, the hydrodynamic limit we obtain
is the system of equations \eqref{f00} in infinite volume with 
 reservoirs.  
 To understand how \eqref{f00} appears as limit point in  {(2)}, 
let us consider, for any function 
$\widehat G=(G_1,G_2,G_3) \in \mathcal  C_{c,0}^{1,2} ([0,T] \times \overline \Lambda ; \mathbb R^3)$, 
the mean-zero martingale 
  \begin{eqnarray}
\widehat M_T^{N} (\widehat G)& =& \sum_{i=1}^3 M_T^{N,i} (G_i) 
\qquad\mbox{where, for } 1\le i\le 3,\nonumber\\
M_T^{N,i} (G_i)&=&
\langle \pi_T^{N,i}, G_{i,T} \rangle - \langle \pi_0^N, G_{i,0} \rangle
  - \int_0^T \langle \pi_s^{N,i}, \partial_s G_{i,s} \rangle ds
 - \int_0^T \mathfrak L_{N,M_N} \langle \pi_s^{N,i}, G_{i,s} \rangle ds \, .
 \label{martingale}
  \end{eqnarray}
 To establish the convergence of $\widehat M_T^{N} (\widehat G)$
and exhibit a limit point,  we  compute, for $1\le i\le 3$: 
\begin{eqnarray}
N^2 \mathcal L_{N,M_N} \langle \pi_s^{N,i}, G_{i,s} \rangle 
&=& \langle \pi_s^{N,i}, \Delta_N G_{i,s} \rangle 
- \dfrac{1}{N^{d-1}}\sum\limits_{x \in 
 \Gamma_N^+}
  \partial_{e_1}^N G_{i,s}((x-e_1)/N) \widetilde\eta_{i,s}(x)\nonumber\\
&& + \dfrac{1}{N^{d-1}}\sum\limits_{x \in 
\Gamma_N^-}
 \partial_{e_1}^N G_{i,s}(x/N) \widetilde\eta_{i,s}(x)\, , \label{martingale_diff}
\end{eqnarray}
where $\Gamma_N^\pm 
= \{(u_1, \dots , u_d)\in \overline \Lambda_N  : u_1 = \pm N \}$.
Indeed, since $M_N\gg N$ and $G$ has compact support, for $N$ large enough $M_N$
does not appear on the r.h.s. of \eqref{martingale_diff}.\\ 
We also compute $\widehat f = (f_1,f_2,f_3) : \ws_N \rightarrow \mathbb R^3$:
\begin{equation}\label{eq:fmicro}
\begin{cases}
f_1 (\zeta,\chi) & = \;\; \mathbb L_{N,M_N} \widetilde\eta_1(0) 
= \beta_{\Lambda_{N,M_N}}(0,\zeta,\chi)  \widetilde\eta_0(0)
 + \widetilde\eta_3(0) - (r+1)\widetilde\eta_1(0), \\ 
f_2 (\zeta,\chi)   & =  \;\; \mathbb L_{N,M_N} \widetilde\eta_2(0) 
= r \widetilde\eta_0(0) + \widetilde\eta_3(0)
 -\beta_{\Lambda_{N,M_N}}(0,\zeta,\chi) \widetilde\eta_2(0) - \widetilde\eta_2(0), \\ 
 f_3 (\zeta,\chi)  & = \;\; \mathbb L_{N,M_N} \widetilde\eta_3(0) 
 = \beta_{\Lambda_{N,M_N}}(0,\zeta,\chi) \widetilde\eta_2(0)
  + r \widetilde\eta_1(0) - 2 \widetilde\eta_3 (0)\, ,
\end{cases}
\end{equation}
so that
\begin{equation}\label{martingale_diff-2}
 \mathbb L_{N,M_N} \langle \pi_s^{N,i}, G_{i,s} \rangle 
=  \dfrac{1}{N^{d}}\sum\limits_{x \in \Lambda_{N,M_N}} G_{i,s}(x/N) \tau_x f_i (\zeta_s,\chi_s), 
\end{equation} 
 where $\tau_\cdot$ denotes the shift operator, that is, 
for $x \in \Lambda_{N,M_N}$, 
$\tau_x \widetilde \eta_1 (0) = \mathbf 1_{\{\widetilde \eta (x) = 1\}} =\zeta(x)(1-\chi(x))$.\\
Again, for $N$ large enough,  $M_N$ does not play any role 
on the r.h.s. of \eqref{martingale_diff-2}.\\
Since $\widehat G$ vanishes at the boundaries on $\overline \Lambda$,
the generator $L_{\widehat b,N,M_N}$ is not needed.\\
Therefore, to close the equations in $\widehat M_t^{N} (\widehat G)$, we need to do two replacements,
stated in Lemma \ref{replacement} and Proposition \ref{bddinf2} below:
we have to replace local functions in the bulk by a function of the empirical density 
in \eqref{martingale_diff-2}, and  
to replace the density at the boundary by a value of the function 
$\widehat b$ in \eqref{martingale_diff}.
Lemma \ref{replacement} and Proposition \ref{bddinf2} are the main steps to show that
 any limit point of the sequence 
 $(\widetilde {Q}_{\mu_N}^{M_N,\widehat b})_{N \geq 1}$
is concentrated on trajectories that are weak solutions of the system of equations \eqref{f00}.
The proof of Lemma \ref{replacement} relies on uniform upper bounds 
on the entropy production and the Dirichlet form stated in 
Subsection \ref{subsec:specific} (Theorem \ref{dirichlet}) 
and proved in Subsection \ref{sec:entropy}.
The proof of Proposition \ref{bddinf2} relies on the properties of the boundary dynamics.
\begin{remark}\label{rem_bdd} 
Except for the replacement lemma at the boundary (that is, Proposition \ref{bddinf2}),
all the results needed in steps (1), (2) and (3) are valid both in infinite volume and in a
large finite volume with length
$M_N=N^{1+\frac1d}$. Therefore, we shall state and prove all these results in infinite volume.
\end{remark}
For any smooth profile $\widehat{\theta}=(\theta_1,\theta_2,\theta_3) : \overline \L \to (0,1)^3$ 
satisfying \eqref{eq:cC} and \eqref{b-for-rev}, and for
any cylinder function $\phi(\xi,\omega)$, denote by $\widetilde \phi (\widehat \theta) $ 
the expectation of $\phi$ with respect to $\nu_{\widehat \theta }^N$. 
For any $\ell \in \mathbb N$, define the empirical mean densities in a box of size 
$(2\ell+1)^d$ centered at $x$ by 
$\widehat \eta^\ell (x) = (\eta_1^\ell (x),\eta_2^\ell (x),\eta_3^\ell (x))$:
$$\eta_i^\ell (x) = \dfrac{1}{(2\ell+1)^d} 
\sum_{\|y-x\| \leq \ell\atop y\in \Lambda_N} \eta_i(y), 
\mbox{ for  } 1\le i \le 3.$$
so that we can define for any $\epsilon >0$ small enough (as usual we omit 
  to write integer parts in bounds of intervals: 
   $\epsilon N$  replaces $\lfloor\epsilon N\rfloor$),
  \begin{equation}\label{block-V}
V_{\epsilon N} (\xi,\omega) = \Big| \dfrac{1}{(2\epsilon N+1)^d} 
 \sum\limits_{\|y\| \leq \epsilon N}\tau_y \phi(\xi,\omega) 
 - \widetilde \phi(\widehat \eta^{\epsilon N}(0)) \Big|.
\end{equation}
\begin{lemma}[replacement in the bulk]\label{replacement}
For any $G \in \mathcal C_c^\infty ([0,T] \times  \Lambda, \mathbb R)$,
\begin{equation*}\label{replacement_1}
\limsup\limits_{\epsilon \rightarrow 0} \limsup\limits_{N \rightarrow \infty} 
 \mathbb E_{\mu_N}^{N,\widehat b} \Big( \dfrac{1}{N^d} \sum\limits_{x \in \Lambda}
   \int_0^T |G_s(x/N)| \tau_x V_{\epsilon N}(\xi_s,\omega_s) ds \Big) = 0.
\end{equation*}
\end{lemma}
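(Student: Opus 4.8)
The plan is to follow the classical one-block/two-block estimate of Guo, Papanicolaou and Varadhan \cite{gpv}, adapted to our infinite-volume setting, but the whole argument rests on the entropy and Dirichlet form bounds of Theorem \ref{dirichlet} (Subsection \ref{subsec:specific}), which is why Remark \ref{rem_bdd} allows us to work directly in infinite volume. As usual, the replacement is carried out in two stages. First, a \emph{one-block estimate}: one replaces the microscopic average $(2\ell+1)^{-d}\sum_{\|y\|\le\ell}\tau_y\phi$ over a box of \emph{fixed} size $\ell$ by its conditional expectation $\widetilde\phi$ given the empirical density $\widehat\eta^{\ell}(0)$ in that same box. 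Second, a \emph{two-blocks estimate}: one shows that the density $\widehat\eta^{\ell}(0)$ in a fixed box is, in the relevant sense, close to the density $\widehat\eta^{\epsilon N}(0)$ in a macroscopic box of size $\epsilon N$, so that $\widetilde\phi(\widehat\eta^{\ell}(0))$ may be replaced by $\widetilde\phi(\widehat\eta^{\epsilon N}(0))$. Combining the two and letting first $N\to\infty$, then $\ell\to\infty$, then $\epsilon\to0$, yields the statement; the continuity of $\widehat\theta\mapsto\widetilde\phi(\widehat\theta)$ (equivalently of $\widehat\rho\mapsto\widetilde\phi$) makes the passage to the limit in the block sizes legitimate.

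The mechanism in each stage is the standard entropy inequality. Writing $f_s^N$ for the density (with respect to a reference product measure $\nu^N_{\widehat\theta(\cdot)}$) of the law of $(\xi_s,\omega_s)$ averaged over $s\in[0,T]$, one bounds the expectation in the lemma by
\begin{equation*}
\frac{1}{\alpha N^d}\, H(\mu_N\,|\,\nu^N_{\widehat\theta(\cdot)})
\;+\;\frac{1}{\alpha N^d}\log \mbe^{\nu^N_{\widehat\theta(\cdot)}}\!\Big[\exp\Big\{\alpha\sum_{x}\int_0^T |G_s(x/N)|\,\tau_x V_{\epsilon N}(\xi_s,\omega_s)\,ds\Big\}\Big]
\end{equation*}
for every $\alpha>0$, and then applies a Feynman--Kac / variational estimate to the second term, which reduces matters to controlling the Rayleigh quotient involving the Dirichlet form. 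Here Theorem \ref{dirichlet} supplies exactly the needed input: the specific entropy $N^{-d}H(\mu_N\,|\,\nu^N_{\widehat\theta(\cdot)})$ is bounded uniformly in $N$, and the time-integrated specific Dirichlet form of $f^N_s$ is $O(N^{-2})$, so that after the $N^2$ speed-up the exchange part of the generator dominates the local perturbation. The one-block estimate then follows from the equivalence of ensembles in the finite box of size $\ell$ together with the fact that the conditional measures converge to the canonical (hence, for fixed density, to the grand-canonical) measures; the two-blocks estimate follows from the standard argument transferring mass between the fixed box and the macroscopic box along the exchange dynamics, again paying only a $o(1)$ price in the Dirichlet form.

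The part that needs genuine care, rather than a citation to \cite{kl}, is the adaptation to infinite volume and to the fact that the reference measure $\nu^N_{\widehat\theta(\cdot)}$ has a slowly varying chemical potential rather than a constant one. Concretely: the sums over $x\in\Lambda_N$ are infinite, so one must localize using the compact support of $G$ (only $x$ with $x/N$ in a fixed compact subset of $\Lambda$ contribute, and the macroscopic boxes of size $\epsilon N$ around them stay, for $N$ large, inside $\Lambda_{N,M_N}$ where $M_N=N^{1+1/d}\gg N$, so that none of the suppressed transitions near the artificial boundary interferes); and one must check that replacing the inhomogeneous $\nu^N_{\widehat\theta(\cdot)}$ by the locally constant product measure costs only a negligible error, using the $\mathcal C^2$ regularity \eqref{eq:cC} of $\widehat\theta$ and the smoothness of $\widehat b$. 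The CPRS part $\mathbb L_N$ and the boundary part $L_{\widehat b,N}$ are not speeded up by $N^2$, so their contribution to the Dirichlet-form estimate is of lower order and does not obstruct the argument; this is the reason the generator $L_{\widehat b,N,M_N}$ plays no role in the bulk replacement. I expect the bookkeeping of these localization and inhomogeneity errors, together with verifying that Theorem \ref{dirichlet} applies uniformly across the compact set of relevant sites, to be the main technical obstacle; the block estimates themselves are then routine.
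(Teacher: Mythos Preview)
Your overall strategy---localize, then reduce to one-block and two-block estimates using the entropy/Dirichlet bounds of Theorem \ref{dirichlet}---is correct and matches the paper's. Two points, however, need correction.

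First, the entropy inequality as you display it cannot be the starting point: in infinite volume $H(\mu_N\,|\,\nu^N_{\widehat\theta(\cdot)})$ is typically infinite, and the specific entropy $\mathcal S$ of \eqref{def:spec-ent} is not $N^{-d}H$ but a weighted sum of finite-box entropies. The paper therefore localizes \emph{first}, restricting to the box $\Lambda_{\lfloor N(1-\delta)\rfloor,NK}$ determined by the support of $G$, and then works entirely with the marginal on $\Lambda_{N,N(K+2)}$. No entropy inequality or Feynman--Kac step is used: instead one writes the expectation as $T\int(\ldots)\,\bar f^T\,d\nu$ for the time-averaged marginal density $\bar f^T=T^{-1}\int_0^T f^s_{N(K+2)}\,ds$, inserts $-\gamma T N^{2-d} D_{N(K+2)}(\bar f^T)+\gamma C_1$ (the latter bounding the former by Theorem \ref{dirichlet}), and then takes the supremum over densities $f$ with $D_{N(K+2)}(f)\le CN^{d-2}$. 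Your Feynman--Kac route would also work once properly localized, but the paper's route is more direct.

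Second, the inhomogeneity of the reference measure is not handled by a $\mathcal C^2$-error estimate as you suggest, but by choosing the reference profile $\widehat\theta_a$ to be \emph{constant} (equal to some $\widehat\alpha$) on a box $\Lambda(1-a,K)$ containing the support of $G$, while still equal to $\widehat b$ at $\Gamma$. The one- and two-block estimates (Lemmas \ref{1block} and \ref{2block}) are then stated and proved with respect to the homogeneous product measure $\nu^N_{\widehat\alpha}$, so no slowly-varying-potential bookkeeping is needed at all.

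A minor factual slip: the boundary generator $L_{\widehat b,N}$ \emph{is} speeded up by $N^2$ in \eqref{eq:gen}. This is immaterial for the bulk replacement since $G$ is supported away from $\Gamma$, but your explanation of why the boundary part is harmless is misattributed.
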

The proof of Lemma \ref{replacement} is postponed to Subsection 
\ref{subsec:rep-lem}.\\
We now state that the limiting trajectories for the system in large finite volume 
satisfy the Dirichlet boundary 
conditions with value $\widehat b(\cdot)$. 
The proof of Proposition \ref{bddinf2} is postponed to  Section \ref{sec:hydro_fini}.
\begin{proposition}[replacement at the boundary]\label{bddinf2} 
For any bounded function $H : [0,T] \times \Gamma\to \R$ with compact support in $\Gamma$, for any 
$\delta >0$, for all $i\in\{1,2,3\}$,
$$
\limsup\limits_{N \rightarrow \infty} 
{\widetilde {\mathbb P}}_{\mu_N}^{M_N,\widehat b}
\Big( \Big| \int_0^T 
\dfrac{1}{N^{d-1}} \sum\limits_{x \in \Lambda_{N,M_N}\cap\Gamma_N}  
H_t(x/N) \Big( {\widetilde \eta}_{i,t}(x) - b_i(x/N) \Big)
 dt \Big|>\delta \Big) =0.
$$
\end{proposition}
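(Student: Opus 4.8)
The plan is to exploit the reversibility of the boundary generator $L_{\widehat b,N}$ with respect to $\nu^N_{\widehat\theta}$, together with an entropy estimate, to show that the empirical density on $\Gamma_N$ is forced to the value $\widehat b$. First I would reduce the statement to an estimate on the relative entropy of the law of the process at a fixed time with respect to the reference measure $\nu^N_{\widehat\theta}$: by the entropy inequality (exponential Chebyshev), for any $A>0$,
\[
{\widetilde{\mathbb P}}_{\mu_N}^{M_N,\widehat b}\Big(\,\big|X^N_i\big|>\delta\Big)\;\le\;\frac{1}{A\delta}\Big(H\big(\mu_N\mid \nu^N_{\widehat\theta}\big)+\log\mbe^{\nu^N_{\widehat\theta}}\big[\exp\{A\,|X^N_i|\}\big]\Big),
\]
where $X^N_i$ denotes the time integral appearing in the statement. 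The first term is $O(N^d)$ by the construction of $\mu_N$ (or, more precisely, one uses the entropy of the law at time $t$, controlled by $H(\mu_N\mid\nu^N_{\widehat\theta})$ plus an entropy-production term from the Dirichlet-form bound of Theorem \ref{dirichlet}, divided by the $1/(A\delta)$ prefactor which one takes with $A$ proportional to $N^d$ so that this does not blow up). The real work is then a Feynman--Kac / variational estimate on the exponential moment.

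Next I would invoke Feynman--Kac to bound $\log\mbe[\exp\{A|X^N_i|\}]$ by a time integral of the largest eigenvalue of a perturbed operator, which by the standard variational formula is controlled by
\[
\sup_f\Big\{\frac{A}{N^{d-1}}\sum_{x\in\Lambda_{N,M_N}\cap\Gamma_N}H_t(x/N)\big(\eta_i(x)-b_i(x/N)\big)\,f(\xi,\omega)\;-\;N^2\,\mathcal D_N(\sqrt f\,)\Big\},
\]
where $f$ ranges over densities with respect to $\nu^N_{\widehat\theta}$ and $\mathcal D_N$ is the full Dirichlet form; here I would discard the bulk (exchange and CPRS) parts of the Dirichlet form and keep only the boundary part $N^2 D_{\widehat b,N}$. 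The key algebraic identity is that, because $\nu^N_{\widehat\theta}$ is reversible for $L_{\widehat b,N}$ (Consequences \ref{csq:chg-var}(ii)) and because $\nu^N_{\widehat\theta}$ restricted to a boundary site $x$ gives $\eta_i(x)$ mean exactly $\theta_i(x/N)=b_i(x/N)$, the linear functional $\eta_i(x)-b_i(x/N)$ is, up to a bounded multiplicative factor, of the form $(L^x_{\widehat b,N}g)$ for an explicit bounded local function $g$; equivalently it is a coboundary for the boundary dynamics. Then an integration by parts against $f$ turns the linear term into something dominated by the boundary Dirichlet form via Cauchy--Schwarz: schematically
\[
\Big|\int (\eta_i(x)-b_i(x/N))\,f\,d\nu^N_{\widehat\theta}\Big|\;\le\;\text{(const)}\,\sqrt{D^x_{\widehat b,N}(\sqrt f)}\;\le\;\text{(const)}\Big(\kappa + \tfrac1\kappa D^x_{\widehat b,N}(\sqrt f)\Big),
\]
for any $\kappa>0$. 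Summing over the $O(N^{d-1}\cdot(\text{support of }H))$ boundary sites and choosing $A$ and $\kappa$ so that the $\tfrac1\kappa D^x$ terms are absorbed by the $N^2 D_{\widehat b,N}(\sqrt f)$ available in the variational problem (the $N^2$ speed-up is exactly what makes this work, beating the $N^{d-1}$ normalisation), one gets that the supremum above is $O(N^{d-1}/\kappa)=o(N^d)$ after optimisation; dividing by $A\sim N^d$ and then letting $N\to\infty$ gives the claim. A separate small point: one must check that the compact support of $H$ makes the number of relevant boundary sites of order $N^{d-1}$ and ensures $M_N\gg N$ plays no role, exactly as in the discussion preceding the statement.

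The main obstacle I anticipate is the combination of bookkeeping and the coboundary representation: one needs the precise statement that $\eta_i(x)-b_i(x/N)$ can be written using the boundary-flip operators at $x$ with coefficients bounded uniformly in $N$ (this uses the strict bounds \eqref{eq:cC} on $\widehat\theta$ to keep the rates $b_i$, and hence the inverse factors appearing in the integration by parts, bounded away from $0$ and $\infty$), together with the correct matching of powers of $N$ so that the Dirichlet-form cost is genuinely negligible. A secondary technical point is that $\eta_i(x)$ for $i=1,2,3$ are not independent single-site indicators but functions of the pair $(\xi(x),\omega(x))$, so the boundary generator couples the two species at $x$; however since $L^x_{\widehat b,N}$ acts on the whole four-state variable at $x$ and is reversible for the (product over boundary sites) marginal of $\nu^N_{\widehat\theta}$, the argument goes through verbatim once one works with the four-state single-site chain rather than two Bernoulli variables. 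Everything else — tightness of the exponential moment, the entropy inequality, Feynman--Kac — is routine and I would cite \cite{kl}.
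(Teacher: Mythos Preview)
Your overall strategy---Feynman--Kac, the variational formula, the identity expressing $\eta_i(x)-b_i(x/N)$ as a coboundary for the boundary flip at $x$, and absorption into the boundary Dirichlet form $N^2 D^{\widehat b}$---is exactly the paper's approach. The coboundary step you anticipate is precisely the content of \eqref{replacement_eq2}--\eqref{replacement_eq3}: using the changes of variables \eqref{change_jump1}--\eqref{change_jump3} together with $\widehat\theta|_\Gamma=\widehat b$, one rewrites $\int(\eta_i(x)-b_i(x/N))f\,d\nu^N_{\widehat\theta}$ as a sum of terms $\int b_i(x/N)\eta_j(x)(f(\text{flip})-f)\,d\nu^N_{\widehat\theta}$, then applies $2ab\le \kappa a^2+\kappa^{-1}b^2$ to extract a piece of $(D^{\widehat b})^x$.

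There is, however, a genuine gap in your power-counting, and it is not cosmetic. You claim the initial entropy is $O(N^d)$ and take $A\propto N^d$; but on $\Lambda_{N,M_N}$ the entropy (equivalently the log of the Radon--Nikodym bound) is $O(|\Lambda_{N,M_N}|)=O\!\big(N\,M_N^{\,d-1}\big)=O\!\big(N^{\,d+\frac{d-1}{d}}\big)$, which for $d\ge 2$ is $\gg N^d$, so the term $H(\mu_N\mid\nu)/A$ diverges. More subtly, you propose to ``discard the bulk (exchange and CPRS) parts of the Dirichlet form'': you cannot, because $\nu^N_{\widehat\theta}$ is \emph{not} reversible for $\mathcal L_{N,M_N}$ or $\mathbb L_{N,M_N}$. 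Lemma~\ref{estimates_bdd} shows instead that
\[
\langle N^2\mathcal L_{N,M_N}\sqrt f,\sqrt f\rangle\;\le\;-A_0N^2\mathcal D^0_{M_N}(f)+A_0'|\Lambda_{N,M_N}|,\qquad
\langle \mathbb L_{N,M_N}\sqrt f,\sqrt f\rangle\;\le\;A_1|\Lambda_{N,M_N}|,
\]
so the variational supremum picks up an unavoidable additive $O(|\Lambda_{N,M_N}|)$, not $o(N^d)$. This forces the normalisation $|\Lambda_{N,M_N}|$, not $N^d$. The paper therefore does \emph{not} use your entropy inequality with a fixed $A$: it bounds the Radon--Nikodym derivative $d\mu_{N,M_N}/d\nu^N_{\widehat\theta,M_N}\le e^{K_1|\Lambda_{N,M_N}|}$ and proves the \emph{super-exponential} estimate $\limsup_N|\Lambda_{N,M_N}|^{-1}\log \widetilde{\mathbb P}^{M_N,\widehat b}_{\nu^N_{\widehat\theta}}(\cdot)=-\infty$, obtained by taking $a\uparrow\infty$ in the exponential Chebyshev with parameter $a|\Lambda_{N,M_N}|$. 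Finally, your remark that ``$M_N\gg N$ plays no role'' is the opposite of the truth: after the Cauchy--Schwarz split the error is $\dfrac{C\,a^2|\Lambda_{N,M_N}|}{N^{d+1}}$, and it is precisely the choice $M_N=N^{1+1/d}$ (so $|\Lambda_{N,M_N}|/N^{d+1}=N^{-1/d}\to 0$) that makes this vanish; see \eqref{eq:last-3.3}.
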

\noindent 
$\bullet$ For the second part 
of the proof of Theorem \ref{res:hydrodynamics},
we couple the original process $(\xi_t,\omega_t)_{t\in [0,T]}$ in infinite volume
with $(\zeta_t,\chi_t)_{t\in [0,T]}$. 
Let ${\overline \mu}_N$ be the measure on ${\widehat\Sigma}_N\times {\widehat\Sigma}_N$
concentrated on its diagonal and with
marginals equal to ${\mu}_N$.
Denote by ${\overline {\mathbb P}}_{{\overline \mu}_N}^{M_N,\widehat b}$
the law of the coupled process $((\xi_t,\omega_t), (\zeta_t,\chi_t))_{t\in [0,T]}$
with initial distribution  ${\overline \mu}_N$, 
and by ${\overline {\mathbb E}}_{{\overline \mu}_N}^{M_N,\widehat b}$
the corresponding expectation. 
By Tchebycheff inequality, for all $\delta >0$ and $t\ge 0$,
\begin{eqnarray}\nonumber
&& \mathbb P_{\mu_N}^{N,\widehat b} \Big( \Big|  \dfrac{1}{N^{d}} 
\sum\limits_{x \in \Lambda_N}  G_{i,t}(x/N) \Big( \eta_{i,t}(x) - \rho_i(t,x/N) \Big)
 \Big|>\delta \Big)\\
&& \quad  \le
{\widetilde {\mathbb P}}_{\mu_N}^{M_N,\widehat b} \Big( \Big| \dfrac{1}{N^{d}} 
\sum\limits_{x \in \Lambda_N}  G_{i,t}(x/N) \Big( \widetilde\eta_{i,t}(x) - \rho_i(t,x/N)  \Big)
 \Big|>\frac{\delta}{2} \Big)\label{large-finite}\\
&& \qquad  +\frac2\delta
{\overline {\mathbb E}}_{\overline \mu_N}^{M_N,\widehat b} \Big( \Big| \dfrac{1}{N^{d}} 
\sum\limits_{x \in \Lambda_N}  G_{i,t}(x/N) \Big( \eta_{i,t}(x) -\widetilde\eta_{i,t}(x) \Big)
 \Big|\Big). \label{infinite-finite}
\end{eqnarray}
 The hydrodynamic result in large finite volume 
enables to deal with \eqref{large-finite}. For \eqref{infinite-finite},
 we have to prove the
following coupling result, which will conclude the proof of Theorem \ref{res:hydrodynamics}.
\begin{proposition}\label{bddinf3} 
For any bounded function $\widehat G  = (G_1,G_2,G_3): [0,T] \times \Lambda\to \R^3$ 
with compact support in $\Lambda$, for all $i\in\{0,1,2,3\}$,
\begin{equation*}
\limsup\limits_{N \rightarrow \infty} 
{\overline {\mathbb E}}_{\overline \mu_N}^{M_N,\widehat b} 
\Big( \Big|\dfrac{1}{N^{d}} \sum\limits_{x \in \Lambda_N}  
G_{i,t}(x/N) \Big( {\widetilde \eta}_{i,t}(x) -\eta_{i,t}(x) \Big)\Big|\Big) =0\, .
\end{equation*}
\end{proposition}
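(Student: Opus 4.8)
The plan is to exploit the basic coupling between the infinite-volume process $(\xi_t,\omega_t)$ and the large-finite-volume process $(\zeta_t,\chi_t)$, both built from the same Poisson clocks, so that the two configurations agree everywhere initially (since $\overline\mu_N$ sits on the diagonal) and only \emph{discrepancies} created near the boundary of $\Lambda_{N,M_N}$ can propagate inward. The key point is that $M_N = N^{1+1/d}\gg N$, while $\widehat G$ has compact support in $\Lambda$, so the support of $\widehat G$, rescaled, lies inside a box of macroscopic radius $O(1)$, i.e.\ microscopic radius $O(N)$; discrepancies born at distance $\sim M_N$ from the origin need to travel a microscopic distance $\gg N$ to reach the support of $\widehat G$, and the aim is to show this is too far within a time interval $[0,T]$ under the diffusive speed-up $N^2$.

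Concretely, I would proceed as follows. First, note that in the basic coupling the $\xi$- and $\omega$-marginals can be treated separately only up to the CPRS birth rates (which couple them), so one works with the $\eta_i$-labels directly; a \textbf{discrepancy} at a site $x$ at time $t$ means $\widetilde\eta_{i,t}(x)\neq \eta_{i,t}(x)$ for some $i$. Let $\xi^{\mathrm{dis}}_t\subset\Lambda_N$ be the set of discrepancy sites. Second, show the \emph{absorption/monotonicity} property: a site $x$ can become a discrepancy at a given clock ring only if a neighbour $y$ with $\|y-x\|\le 1$ is already a discrepancy, or if the ring is a suppressed transition, which (by construction of the restricted generator) happens only for bonds or sites touching the complement of $\Lambda_{N,M_N}$; hence new discrepancies are created only within microscopic distance $1$ of $\Lambda_N\setminus\Lambda_{N,M_N}$, and otherwise discrepancies spread at finite speed through nearest-neighbour exchanges and CPRS births. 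Third, \textbf{dominate the discrepancy front} by a growth process: the exchange part, sped up by $N^2$, is the dominant mechanism and, after the diffusive rescaling, behaves like a random walk with diffusivity $O(1)$ in macroscopic coordinates; a standard large-deviation estimate for the maximal displacement of a symmetric random walk (or a comparison with independent spreading / a supermartingale argument \`a la the finite-speed-of-propagation bounds in \cite{kl}) gives that
$$
{\overline {\mathbb P}}_{\overline\mu_N}^{M_N,\widehat b}\Big(\,\xi^{\mathrm{dis}}_t\cap B(0,CN)\neq\emptyset\ \text{for some }t\le T\,\Big)\;\le\; C_1\exp\big(-C_2\, M_N^2/(N^2 T)\big)\;=\;C_1\exp\big(-C_2\,N^{2/d}/T\big)\longrightarrow 0,
$$
where $B(0,CN)$ is a microscopic ball containing the support of all $G_{i,t}(\cdot/N)$.

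Given this front estimate, the conclusion is immediate: on the complementary (overwhelmingly probable) event, $\widetilde\eta_{i,t}(x)=\eta_{i,t}(x)$ for every $x$ in the support of $G_{i,t}(\cdot/N)$, so the average in the statement vanishes identically; on the bad event the average is bounded by $\|\widehat G\|_\infty$ times the (finite) number of sites in the support, times $N^{-d}$, which is $O(1)$, and is multiplied by a probability that is exponentially small in $N^{2/d}$. Taking ${\overline {\mathbb E}}_{\overline\mu_N}^{M_N,\widehat b}$ and letting $N\to\infty$ gives the claim. \textbf{The main obstacle} is establishing the finite-speed-of-propagation / discrepancy-front bound cleanly in the presence of the $N^2$ time-speed-up and the three interacting mechanisms (exchange, CPRS, boundary): one must check that the CPRS and boundary parts, which are \emph{not} sped up, only contribute finite-speed spreading and boundary-localised source terms, and that the exchange part's contribution to the front, though sped up, still cannot cover a macroscopic-order microscopic distance $M_N = N^{1+1/d}$ in time $T$ — which is exactly why the choice \eqref{def:MN} of $M_N$ with the gap $N^{1/d}$ is what makes the estimate work.
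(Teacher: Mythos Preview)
Your approach is genuinely different from the paper's and, if carried through, would be more quantitative; but as written it defers the entire substance to what you yourself flag as the ``main obstacle''.

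The paper does \emph{not} track a discrepancy front or invoke any propagation-speed estimate. Instead it introduces the weighted $L^1$ discrepancy functional
\[
h_{M_N,i}(\widetilde\eta,\eta)=N^{-d-1}\sum_{n=1}^{M_N-1}e^{-n/N}\sum_{x\in\Lambda_{N,n}}|\widetilde\eta_i(x)-\eta_i(x)|,
\]
shows the quantity in the statement is dominated by $\sum_i h_{M_N,i}$, and computes $\partial_t\overline{\mathbb E}\big[\sum_i h_{M_N,i}\big]$ via the coupled generator. The exchange part is controlled by the coupled Dirichlet-form bound (Lemma~\ref{dirichlet-couple}, the analogue of Theorem~\ref{dirichlet}); the CPRS part yields a term linear in $\sum_i h_{M_N,i}$; and the coupled boundary generator turns out to be \emph{nonpositive} on $|\widetilde\eta_i-\eta_i|$ (it forces both marginals to the same value). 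The layer $M_N-N\le n\le M_N-1$ is handled by the crude bound $\overline{\mathfrak L}_N|\widetilde\eta_i-\eta_i|\le CN^2$ times the weight $e^{-n/N}\le e^{-N^{1/d}+1}$, which absorbs the $N^2$. A Gronwall inequality then gives a rate $O(N^{-1/2})$. This route recycles the entropy/Dirichlet machinery already built in Section~\ref{sec:specific} and requires no new probabilistic input.

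Your picture is correct in spirit --- discrepancies are seeded only near $\partial\Lambda_{N,M_N}$ and must cross a microscopic distance $\sim N^{1+1/d}$ to reach $\mathrm{supp}\,\widehat G$ --- and the Gaussian tail $\exp(-cM_N^2/(N^2T))=\exp(-cN^{2/d}/T)$ is the right heuristic for a \emph{single} random walk at rate $N^2$. What is missing is the passage from a single walker to the full discrepancy process: there are $O(N\,M_N^{d-2})$ boundary sites, each firing at rate $N^2$ (note, contrary to what you write, the boundary generator \emph{is} sped up by $N^2$, cf.~\eqref{eq:gen}), plus CPRS-induced branching at rate $O(1)$, so a union bound requires controlling a polynomially growing number of sources. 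This can be made rigorous via a graphical construction dominating the discrepancy set by a branching random walk, but it is not a one-line appeal to \cite{kl}, and you have not supplied it. In short: the paper's argument is softer and self-contained given the earlier sections; yours would be sharper but needs an independent front-propagation lemma that is itself the heart of the matter.
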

In Section \ref{sec:hydro_infini} we shall define the appropriate coupling
between $(\xi_t,\omega_t)_{t\in [0,T]}$ and
 $(\zeta_t,\chi_t)_{t\in [0,T]}$,  which turns out to be basic coupling, 
 and prove Proposition \ref{bddinf3}.
%
%
\section{Specific entropy, Dirichlet forms and proof of Lemma \ref{replacement}}\label{sec:specific}
\subsection{Specific entropy: Definitions and results}\label{subsec:specific}
We start by defining the two main ingredients needed in the proof 
of the hydrodynamic limit: the specific entropy and the specific Dirichlet form 
of a measure on  $\widehat \Sigma_N$ with respect to some reference product 
measure.
For each positive integer $n$ and a measure $\mu$ on ${\widehat \Sigma_N}$, 
we denote by $\mu_n$ the marginal of $\mu$ on ${\widehat \Sigma}_{N,n}$: 
For each $(\zeta,\chi)\in \widehat \Sigma_{N,n}$,
\begin{equation}\label{eq:marginal}
\mu_n(\zeta,\chi)=\mu\big\{ (\xi,\omega)\; :\; (\xi(x),\omega(x))=(\zeta(x),\chi(x)) \; \; 
 {\rm for}\ x \in \Lambda_{N,n} \big\} \, .
\end{equation}
We fix as \textit{reference measure} a  product measure 
$\nu^N_{\widehat\theta}:=\nu^N_{\widehat\theta(\cdot)} $, 
where $\widehat\theta=(\theta_1,\theta_2,\theta_3):\Lambda \to (0,1)^3$ 
is  a smooth function satisfying \eqref{eq:cC} and \eqref{b-for-rev}.
  In other words (recall \eqref{eq:rest-bar-nu},
 \eqref{eq:rho_i}), introducing the function 
 $\theta_0(.) = 1-\theta_1(.)-\theta_2(.)-\theta_3(.)$,
we have
\begin{eqnarray}\label{def:ref-measure}
\nu_{\widehat\theta(\cdot),n}^N (\xi,\omega) &=& \widehat Z_{\widehat \theta,n}^{-1} 
\exp \Bigg\{ \sum_{i=1}^3 \sum_{x\in \Lambda_{N,n}} 
\Bigg(\log \frac{\theta_i(x/N)}{\theta_0(x/N)}\Bigg) \eta_i(x) \Bigg\}\\
\hbox{  with}\qquad\widehat Z_{\widehat \theta,n}^{-1} &=&\prod_{x\in \Lambda_{N,n}} \theta_0(x/N) .\nonumber
\end{eqnarray}
We denote by $s_n(\mu_n | \nu_{\widehat\theta, n}^N)$ 
the relative entropy of $\mu_n$ with respect to $\nu_{\widehat\theta, n}^N$ defined by
\begin{equation}\label{def:rel-ent}
s_n(\mu_n | \nu_{\widehat\theta, n}^N) = \sup_{U\in C_b( \widehat \Sigma_{N,n})} \Big\{
\int U(\xi,\omega) d\mu_n(\xi,\omega) - \log \int e^{U(\xi,\omega)}d\nu_{\widehat\theta, n}^N (\eta,\xi)
\Big\} .
\end{equation}
In this formula $C_b( \widehat \Sigma_{N,n})$ stands for the space of all bounded continuous
functions on $\widehat \Sigma_{N,n}$. Since the measure $\nu_{\widehat\theta, n}^N$ gives a positive probability to each
configuration, all the measures on $\widehat \Sigma_{N,n}$ are absolutely continuous with respect to
$\nu_{\widehat\theta, n}^N$ and we have an explicit formula for the entropy :
\begin{equation}\label{def:rel-ent_exp}
s_n(\mu_n | \nu_{\widehat\theta, n}^N)= \int 
\log \left( f_n(\xi,\omega) \right)  d\mu_n (\xi,\omega),
\end{equation}
where $f_n$ is the probability density of $\mu_n$ with respect to $\nu_{\widehat\theta, n}^N$.

Define the  Dirichlet forms
\begin{equation}\label{def:Dn}
 D_n(\mu_n | \nu_{\widehat\theta, n}^N ) 
=\mathcal D_n^0 (\mu_n | \nu_{\widehat \theta ,n}^N)\,
+\, D^{\widehat b}_n  (\mu_n | \nu_{\widehat \theta ,n}^N) \, ,
\end{equation}
with
\begin{eqnarray}\label{def:Dn0}
\mathcal D_n^0 (\mu_n | \nu_{\widehat \theta ,n}^N)
 &=&\sum\limits_{k=1}^d\sum_{x:(x,x+e_k)\in \Lambda_{N,n}\times\Lambda_{N,n} }
 (\mathcal D_n^0)^{x,x+e_k} (\mu_n | \nu_{\widehat \theta ,n}^N)\, ,\\\label{def:Dnb}
 D^{\widehat b}_n  (\mu_n | \nu_{\widehat \theta ,n}^N)
 & =& \sum_{x\in \Lambda_{N,n}\cap\Gamma_N}(D^{\widehat b}_n)^{x}  (\mu_n | \nu_{\widehat \theta ,n}^N)\, ,
\end{eqnarray}
where, writing $y=x+e_k$,
\begin{align}\label{def:Dn0xy}
& (\mathcal D_n^0)^{x,y} (\mu_n | \nu_{\widehat \theta ,n}^N) =  
\int \Big( \sqrt{f_n(\xi^{x,y},\omega^{x,y})}
 - \sqrt{f_n(\xi,\omega)} \Big)^2d\nu_{\widehat\theta, n}^N(\xi,\omega), \\
& \begin{multlined}\label{def:Dnbx}
(D^{\widehat b}_n)^{x} (\mu_n | \nu_{\widehat \theta ,n}^N)= 
\int c_x\big(\widehat b(x/N), \xi,\sigma^x \omega\big) 
\Big( \sqrt{f_n(\xi, \sigma^x \omega)} - \sqrt{f_n(\xi, \omega)} \Big)^2
d\nu_{\widehat\theta, n}^N(\xi,\omega)\\ 
+ 
\int  c_x\big(\widehat b(x/N), \sigma^x\xi, \omega\big)
\Big( \sqrt{f_n(\sigma^x\xi, \omega)} - \sqrt{f_n(\xi, \omega)} \Big)^2
d\nu_{\widehat\theta, n}^N(\xi,\omega)\\ 
+  
\int  c_x\big(\widehat b(x/N), \sigma^x\xi, \sigma^x\omega\big) 
\Big( \sqrt{ f_n(\sigma^x\xi, \sigma^x\omega)}
 -\sqrt{ f_n(\xi, \omega)} \Big)^2
 d\nu_{\widehat\theta, n}^N(\xi,\omega) \, .
 \end{multlined}
\end{align} 
  We shall also need
\begin{equation}\label{def:dir-form-reaction}
\mathbb D_n (\mu_n | \nu_{\widehat \theta ,n}^N)
=\sum_{x\in \Lambda_{N,n}} (\mathbb D_n)^{x} (\mu_n | \nu_{\widehat \theta ,n}^N)\, ,
\end{equation} 
where
\begin{align}
& \begin{multlined} \label{def:dir-form-reaction-x}
(\mathbb D_n)^{x} (\mu_n | \nu_{\widehat \theta ,n}^N) =  
\int \Big(r(1-\omega(x))+\omega(x)\Big) \Big(\sqrt{f_n(\xi,\sigma^x \omega)} 
- \sqrt{f_n(\xi,\omega)} \Big)^2 d\nu_{\widehat\theta, n}^N(\xi,\omega)\\ 
+ 
\int \Big(\beta_{\Lambda_{N,n}} (x,\xi,\omega)(1-\xi(x))+\xi(x)\Big)
\Big( \sqrt{f_n(\sigma^x \xi,\omega)} - \sqrt{f_n(\xi,\omega)} \Big)^2
d\nu_{\widehat\theta, n}^N(\xi,\omega)\, .
\end{multlined}
\end{align}\\ 
Define  the \textit{specific entropy} $\mathcal S (\mu|\nu_{\widehat\theta}^N)$ 
and the \textit{Dirichlet form} $\mathfrak D(\mu|\nu_{\widehat\theta}^N)$ of a
measure $\mu$ on $\widehat \Sigma_N$ with respect to $\nu_{\widehat\theta}^N$ as
\begin{align}\label{def:spec-ent}
\mathcal S (\mu|\nu_{\widehat\theta}^N) 
& =N^{-1} \sum_{n\geq 1} s_n(\mu_n | \nu_{\widehat\theta,n}^N)e^{- n/N},\\
\mf D(\mu|\nu_{\widehat\theta}^N)  & = N^{-1}\sum_{n\geq 1} D_n(\mu_n | \nu_{\widehat\theta,n}^N)e^{-n/N}.
\end{align}
Notice that by the entropy convexity and since 
$\sup_{x\in \Lambda_N}\{\xi(x)+\omega(x) \}$ is finite, for any positive
measure $\mu$ on $\widehat \Sigma_N$ and any integer $n$, we have
\begin{equation}\label{entropy_0}
s_n(\mu_n | \nu_{\widehat\theta,n}^N)\, \le\, C_0 N n^{d-1}\, ,
\end{equation}
for some constant $C_0$ that depends on $\widehat \theta$ 
(cf. comments following Remark V.5.6 in \cite{kl}).
Moreover there exists a positive constant
$C_0' \equiv C(\widehat \theta)$ such that for any positive measure $\mu$ on $\widehat \Sigma_N$,
\begin{equation}\label{entropy_Nd}
\mathcal S (\mu|\nu_{\widehat\theta}^N) \, \le\, C_0' N^{d}\, .
\end{equation}
Indeed, by \eqref{def:spec-ent} and \eqref{entropy_0} we have
\begin{equation}\label{entropy_Nd-detail}
\mathcal S (\mu|\nu_{\widehat\theta}^N) \, \le\, 
N^d C_0 \frac{1}{N} \sum_{n\geq 1}\, e^{- n/N}  {\Big(\frac{n}{N}\Big)}^{d-1}
\end{equation}
that we bound comparing the Riemann sum with an integral.
\smallskip

\smallskip
\noindent
 The goal of this section is to  prove the following 
 appropriate bounds on the entropy production and the Dirichlet form. 
\begin{theorem}\label{dirichlet}
Let $\widehat\theta=(\theta_1,\theta_2,\theta_3):\overline{\Lambda} \to (0,1)^3$ 
be a smooth function
satisfying \eqref{eq:cC} and \eqref{b-for-rev}.
For any time $t\ge 0$, there exists a positive finite constant 
$C_1\equiv C(t,\widehat \theta, \lambda_1, \lambda_2, r)$, so  that 
$$
\int_0^t \mf D(\mu (s)|\nu_{\widehat\theta}^N)\, ds\, \le\, C_1 N^{d-2}\, .
$$
\end{theorem}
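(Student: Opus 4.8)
The plan is to follow the classical entropy-production argument (as in \cite{kl}, Chapter V), adapted to the infinite-volume setting with stochastic boundary reservoirs. Write $f^N_s$ for the density of $\mu(s)$ with respect to $\nu_{\widehat\theta}^N$ and, for each $n$, $f^N_{n,s}$ for the density of the marginal $\mu_n(s)$ with respect to $\nu_{\widehat\theta,n}^N$. The starting point is the entropy-production inequality: since $\partial_s s_n(\mu_n(s)\,|\,\nu_{\widehat\theta,n}^N)$ can be bounded, using $\partial_s\mu(s)=\mf L_N^*\mu(s)$ and the spectral-gap-free convexity argument, by a negative multiple of the Dirichlet form of the marginal plus an error term coming from the fact that $\nu_{\widehat\theta}^N$ is \emph{not} invariant under $\mf L_N$ (only the reservoir part $L_{\widehat b,N}$ is reversible for $\nu_{\widehat\theta}^N$, while $N^2\mathcal L_N$ has a nonzero drift because $\widehat\theta$ is nonconstant, and $\mathbb L_N$ is genuinely non-reversible). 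Concretely I would show, for each $n$,
\begin{equation*}
\partial_s s_n(\mu_n(s)\,|\,\nu_{\widehat\theta,n}^N)\;\le\; -\,C\,N^2 D_n(\mu_n(s)\,|\,\nu_{\widehat\theta,n}^N)\;+\;C\,N^2\,\mathbb D_n(\mu_n(s)\,|\,\nu_{\widehat\theta,n}^N)^{1/2}\,(\text{something of size }N n^{(d-1)/2})\;+\;C\,\mathbb D_n(\mu_n(s)\,|\,\nu_{\widehat\theta,n}^N)\;+\;C\,Nn^{d-1},
\end{equation*}
where the first term is the dissipation produced by $N^2\mathcal L_{N,n}+N^2 L_{\widehat b,N,n}$, the second and third come from estimating $\int \mf L_N^*\mu(s)\,\log f^N_{n,s}\,d\nu$ after the standard manipulation that turns the generator acting on $\log f$ into (a multiple of) the Dirichlet integral plus a remainder controlled by the gradient $\nabla\widehat\theta$ and by the birth/death rates $\beta_{\Lambda_{N,n}}$, $r$, and the last is the crude a priori bound \eqref{entropy_0}. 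The boundary terms entering through bonds that cross $\partial\Lambda_{N,n}$ (i.e.\ exchanges $x,x+e_k$ with only one endpoint in $\Lambda_{N,n}$) contribute a surface term of order $N n^{d-2}$, harmless after summation.

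Next I would absorb the "bad" terms into the good one by Young's inequality: $C N^2 \mathbb D_n^{1/2}\cdot Nn^{(d-1)/2}\le \tfrac12 CN^2 D_n+CN^2 n^{d-1}$ once one observes that $\mathbb D_n\le C(\lambda_1,\lambda_2,r)\,(\mathcal D^0_n + D^{\widehat b}_n + \text{lower order})$ — this is the key algebraic point, namely that the CPRS Dirichlet form $\mathbb D_n$ (a spin-flip form) is dominated, up to bounded multiplicative constants depending only on the rates, by the sum of the exchange Dirichlet form and the boundary Dirichlet form, because every flip can be realized through a bounded number of exchanges plus boundary flips; alternatively one keeps $\mathbb D_n$ but notes it appears with a coefficient $O(1)$ against the $O(N^2)$ coefficient of $D_n$, hence is negligible for $N$ large. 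After this absorption we get
\begin{equation*}
\partial_s s_n(\mu_n(s)\,|\,\nu_{\widehat\theta,n}^N)\;\le\; -\,\tfrac{C}{2}\,N^2 D_n(\mu_n(s)\,|\,\nu_{\widehat\theta,n}^N)\;+\;C'\,N^2\, n^{d-1}.
\end{equation*}
Integrating in $s$ over $[0,t]$, using $s_n\ge 0$ and $s_n(\mu_n(0)\,|\,\cdot)\le C_0 Nn^{d-1}$, rearranging, multiplying by $N^{-1}e^{-n/N}$ and summing over $n\ge 1$ gives
\begin{equation*}
\int_0^t \mf D(\mu(s)\,|\,\nu_{\widehat\theta}^N)\,ds\;\le\; \frac{C}{N^2}\cdot\frac{1}{N}\sum_{n\ge 1}e^{-n/N}\big(C_0 Nn^{d-1}+C' t\,N^2 n^{d-1}\big),
\end{equation*}
and the Riemann-sum comparison $\frac1N\sum_{n\ge1}e^{-n/N}(n/N)^{d-1}\le C$ (already used for \eqref{entropy_Nd-detail}) shows the right-hand side is $\le C_1 N^{d-2}$, which is the claim.

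The main obstacle is the treatment of the non-reversible and non-stationary pieces of the generator, i.e.\ controlling $\int (\mathbb L_N^*+N^2\mathcal L_N^*)\mu(s)\cdot\log f^N_{n,s}\,d\nu_{\widehat\theta,n}^N$ without the luxury of $\nu_{\widehat\theta}^N$-invariance. For the exchange part this is the standard computation producing a term of the form $N^2\sum_{x,x+e_k}\int (\text{ratio of }\theta)\,(f_n(\xi^{x,y},\omega^{x,y})-f_n(\xi,\omega))\,d\nu$, bounded via Cauchy–Schwarz by $(\text{const})\cdot N^2 (\mathcal D^0_n)^{1/2}(Nn^{(d-1)/2})$ using the smoothness and \eqref{eq:cC} of $\widehat\theta$; the $N^2$ is exactly compensated because the gradient of $\theta(x/N)$ contributes an extra $N^{-1}$, so effectively one pairs $N^2$-scaled dissipation against an $N^2$-scaled-but-square-rootable error — Young's inequality then closes it with room to spare. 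For the CPRS part one does the same with the spin-flip form $\mathbb D_n$, and here the point is that $\mathbb L_N$ carries no extra $N^2$, so its contribution is genuinely lower order. Handling the interaction between the two species in $\beta_{\Lambda_{N,n}}$ and bounding it by the ($\widehat\theta$-dependent, hence uniformly positive and bounded) rates, together with keeping careful track of how the exponential weight $e^{-n/N}$ interacts with the $n$-dependent a priori bounds, are the technical heart of the argument; everything else is bookkeeping.
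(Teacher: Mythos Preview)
Your overall strategy---entropy production at each scale $n$, then summation with the weight $N^{-1}e^{-n/N}$---is exactly the paper's, but the execution has a power-counting error that breaks the conclusion. Your claimed post-absorption bound $\partial_s s_n \le -\tfrac{C}{2}N^2 D_n + C'N^2 n^{d-1}$ carries one factor of $N$ too many in the error term: summing $N^{-1}e^{-n/N}\cdot C'N^2 n^{d-1}$ over $n$ gives a quantity of order $N^{d+1}$, so after dividing by $N^2$ you end up with $\int_0^t\mf D\,ds\le C_1 N^{d-1}$, not $N^{d-2}$. The correct error at the level of $\partial_s s_n$ is $O(Nn^{d-1})+O(N^2 n^{d-2})$, both of which sum to $O(N^d)$; this is what the paper obtains in \eqref{eq:last-for-s}. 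The missing $N^{-1}$ is precisely the gradient factor you mention in prose but never insert into your displayed cross term: the exchange remainder, once the $R^{x,y}_{i,j}=O(N^{-1})$ from \eqref{Rij-Taylor} is used, scales as $N^2\cdot N^{-1}\cdot(\mathcal D^0_n)^{1/2}(Nn^{d-1})^{1/2}$, and Young's inequality then yields $\tfrac14 N^2\mathcal D^0_n+CNn^{d-1}$, not $CN^2 n^{d-1}$. Also, the cross term involves $\mathcal D^0_n$, not $\mathbb D_n$.

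Two further points. First, your claim $\mathbb D_n\le C(\mathcal D^0_n+D^{\widehat b}_n)$ is false: a bulk spin-flip changes the particle count of each type and occurs away from $\Gamma_N$, so it cannot be realised by exchanges and boundary flips. Your ``alternative''---that $\mathbb D_n$ carries no $N^2$ and can simply be dropped as a nonpositive contribution---is what actually works, and is exactly what the paper does in \eqref{entbound_int2}. Second, the exchanges across $\partial\Lambda_{N,n}$ are not a harmless surface term of size $Nn^{d-2}$: they produce Dirichlet-form pieces $(\mathcal D^0_{n+\ell})^{x,x+e_k}$ living at larger scales $n+\ell$ (see \eqref{eq:bd1}), and these must be reabsorbed into $\mf D$ \emph{after} the specific summation via the inequality \eqref{dirformalter}, choosing the free constant $A$ appropriately. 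This reabsorption is the genuine infinite-volume subtlety; without it the boundary-crossing contribution does not close.
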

To get this result, we need to 
 bound the entropy production in terms of the Dirichlet form. This is given by the following lemma.
\begin{lemma}\label{ent_bound}
There exist positive constants $A_0, A_1$ such that for any $t> 0$,
\begin{equation}\label{ent_bound:eq}
\partial_t \mathcal S (\mu(t)|\nu_{\widehat\theta}^N) \leq -A_0 N^2 \mathfrak D(\mu (t)|\nu_{\widehat\theta}^N) + A_1 N^d\, .
\end{equation}
\end{lemma}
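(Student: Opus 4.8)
The plan is to follow the by-now classical entropy method (see \cite{kl}, Chapter V, and \cite{els1,els2} for the boundary-driven setting), adapting it to the infinite-volume specific entropy $\mathcal S(\mu(t)|\nu^N_{\widehat\theta})$. First I would recall Yau's inequality: writing $f^N_n(t)$ for the density of $\mu_n(t)$ with respect to $\nu^N_{\widehat\theta,n}$, one has, for each fixed level $n$,
\[
\partial_t s_n(\mu_n(t)\,|\,\nu^N_{\widehat\theta,n})\;\le\;
\int \frac{\mathfrak L_{N,n}^*\,\sqrt{f^N_n(t)}}{\sqrt{f^N_n(t)}}\,d\mu_n(t),
\]
where $\mathfrak L_{N,n}^*$ is the adjoint of (the restriction of) the generator in $L^2(\nu^N_{\widehat\theta,n})$, and the speeded-up factor $N^2$ in front of $\mathcal L_{N,n}$ and $L_{\widehat b,N,n}$ is kept. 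The point is that the adjoint splits as a symmetric part plus a remainder: the exchange part $N^2\mathcal L_{N,n}$ and the boundary part $N^2 L_{\widehat b,N,n}$ are reversible with respect to $\nu^N_{\widehat\theta}$ \emph{when $\widehat\theta$ is constant}, so their adjoints equal themselves up to an error measuring the variation of $\widehat\theta$ over a bond (of order $1/N$), while the CPRS part $\mathbb L_{N,n}$ is genuinely non-reversible and contributes a bulk error of order $1$ per site. The symmetric contributions yield exactly $-N^2\bigl(\mathcal D_n^0 + D_n^{\widehat b}\bigr)(\mu_n(t)\,|\,\nu^N_{\widehat\theta,n})$ after the standard rewriting in terms of squared gradients of $\sqrt{f^N_n}$.

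Next I would estimate the three error terms. The exchange error, coming from replacing $\nu^N_{\widehat\theta(\cdot)}$ by a locally constant measure, is handled by a Cauchy--Schwarz (or weighted Young) inequality of the form $ab \le \varepsilon a^2 + b^2/(4\varepsilon)$: the ``$a$'' factor is a gradient of $\sqrt{f^N_n}$, absorbed into a small multiple of $N^2\mathcal D_n^0$, and the ``$b$'' factor carries a $\nabla\widehat\theta = O(1/N)$ which, combined with the $N^2$ speed-up and summed over the $O(n^{d-1})$ bonds touching level $n$, produces a term of order $n^{d-1}$. For the CPRS part one proceeds similarly, splitting off a small multiple of $N^2\mathbb D_n$ (which is \emph{nonnegative} and can simply be dropped, since it does not appear on the left side of \eqref{ent_bound:eq}) against a term bounded using the boundedness of the rates $r,1,\beta_{\Lambda_{N,n}} \le 2d\lambda_1$ and of $\log(\theta_i/\theta_0)$, giving order $n^{d-1}$ per level \emph{but without the $N^2$}, hence after the overall $N^{-1}\sum_n e^{-n/N}$ of order $N^d$. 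The boundary error is of the same flavour as the exchange error, restricted to the $O(n^{d-1})$ sites of $\Lambda_{N,n}\cap\Gamma_N$, and absorbed into a small multiple of $N^2 D_n^{\widehat b}$ plus a term of order $n^{d-1}$; here one uses \eqref{eq:cC} to bound the rates $c_x(\widehat b(x/N),\cdot,\cdot)$ and their ratios away from $0$ and $\infty$.

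Finally I would multiply the level-$n$ inequality by $e^{-n/N}$, sum over $n\ge 1$, multiply by $N^{-1}$, and recognise $\mathfrak D(\mu(t)|\nu^N_{\widehat\theta})$ on the right (having chosen $\varepsilon$ small enough that the absorbed fractions of $N^2\mathcal D_n^0$, $N^2 D_n^{\widehat b}$, $N^2\mathbb D_n$ leave a definite negative coefficient $-A_0$ in front of $\mathfrak D$, with the $\mathbb D_n$ piece discarded). The error terms sum to
\[
\text{const}\cdot N^{-1}\sum_{n\ge 1} e^{-n/N} n^{d-1}\;=\;\text{const}\cdot N^{d}\cdot\frac{1}{N}\sum_{n\ge1} e^{-n/N}\Bigl(\frac nN\Bigr)^{d-1}\;\le\; A_1 N^d,
\]
by comparison of the Riemann sum with $\int_0^\infty e^{-u}u^{d-1}\,du = (d-1)!$, exactly as in \eqref{entropy_Nd-detail}. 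I expect the main obstacle to be bookkeeping rather than conceptual: carefully tracking which portion of each Dirichlet form ($\mathcal D_n^0$, $D_n^{\widehat b}$, $\mathbb D_n$) is consumed by which error term so that the final coefficient of $\mathfrak D$ stays strictly negative uniformly in $N$ and $n$, and making sure the non-reversible CPRS contribution really does land at order $N^d$ and not $N^{d+2}$ — which it does precisely because $\mathbb L_{N,n}$ is \emph{not} speeded up by $N^2$ in \eqref{def:gen-Nn}.
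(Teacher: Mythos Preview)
Your plan has a genuine gap at its very first step. You write the level-$n$ entropy production as if the marginal $\mu_n(t)$ were itself a Markov evolution governed by the restricted generator $\mathfrak L_{N,n}$, leading to an inequality of the form $\partial_t s_n \le \langle \mathfrak L_{N,n}\sqrt{f_n^t},\sqrt{f_n^t}\rangle$. But in infinite volume this is false: the marginal on $\Lambda_{N,n}$ is \emph{not} autonomous, because exchanges across the boundary layer $\Lambda_{N,n+1}\setminus\Lambda_{N,n}$ (and CPRS births induced from outside) move mass into and out of $\widehat\Sigma_{N,n}$. The correct Kolmogorov forward equation is $\partial_t f_n^t = \langle \mathfrak L_{N,n+1}^* f_{n+1}^t\rangle_{n+1}$, and consequently $\partial_t s_n = \int f_{n+1}^t\,\mathfrak L_{N,n+1}(\log f_n^t)\,d\nu_{\widehat\theta,n+1}^N$, which couples level $n$ to level $n+1$. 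Handling this coupling is the heart of the infinite-volume argument: one must split $\mathcal L_{N,n+1}$ into an interior part (bonds inside $\Lambda_{N,n}$, giving your expected $-\tfrac12 N^2\mathcal D_n^0$ plus the $O(N^{-2})$-per-bond error from $\nabla\widehat\theta$) and a boundary-layer part (bonds crossing into $\Lambda_{N,n}^c$), and the latter does \emph{not} produce a Dirichlet form at level $n$. The paper controls it by writing the integrand as a difference of conditional expectations of $f_{n+1}^t$, then artificially introducing an average $\tfrac1N\sum_{\ell=1}^N$ over levels so that, after Cauchy--Schwarz, one gets pieces of $(\mathcal D_{n+\ell}^0)^{x,y}$ for $\ell=1,\dots,N$; only \emph{after} multiplying by $N^{-1}e^{-n/N}$ and summing over $n$ do these cross-level pieces reassemble into a constant multiple of $\mathfrak D$, which can then be absorbed by choosing $A$ large. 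None of this mechanism appears in your plan.

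A secondary point: you say the boundary generator $L_{\widehat b,N,n}$ is reversible with respect to $\nu_{\widehat\theta}^N$ only when $\widehat\theta$ is constant, and that there is a residual error to estimate. In fact the assumption \eqref{b-for-rev} (namely $\widehat\theta|_\Gamma=\widehat b$) makes $\nu_{\widehat\theta}^N$ \emph{exactly} reversible for $L_{\widehat b,N}$, so $\Omega_3\le -N^2 D_n^{\widehat b}$ with no error term at all; this is precisely why the boundary dynamics was chosen as it was, and the paper relies on it to avoid an uncontrollable $O(N^2 n^{d-1})$ term.
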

 Note that since we do not know invariant measures for our dynamics, we control 
the entropy production with respect to a product measure which is not stationary,
which leads to the correction term $A_1 N^d$ in \eqref{ent_bound:eq}. 
\medskip
\subsection{Specific entropy: Proof of Theorem \ref{dirichlet}}\label{sec:entropy}
We now prove Theorem \ref{dirichlet} and Lemma \ref{ent_bound}.
\begin{proof}[Proof of Theorem \ref{dirichlet}]
Integrate the expression \eqref{ent_bound:eq} from 0 to $t$ and use \eqref{entropy_Nd}.
\end{proof}
\begin{proof}[Proof of Lemma \ref{ent_bound}]
 For a measure $\mu_n$ on  $\widehat \Sigma_{N,n}$, 
 denote by $f_n^t$ the density of $\mu_n(t)$ with respect to 
 $\nu_{\widehat \theta, n}^N$. 
  By the definition \eqref{def:spec-ent} of specific entropy, we need to bound 
 $\partial_t s_n(\mu_n (t)| \nu_{\widehat\theta , n}^N)$. 
 For any subset ${\mathcal A} \subset \Lambda$ 
 and any function $f \in L^1(\nu_{\widehat \theta}^N)$, denote by 
 $\langle f \rangle_{\mathcal A}$ the function on 
 $(\{0,1\}\times \{0,1\} )^{\Lambda \setminus {\mathcal A}}$ obtained by integrating 
 $f$ with respect to $\nu_{\widehat \theta}^N$ over the coordinates 
 $\{(\xi(x),\omega(x)), x \in {\mathcal A}\}$.  We denote
\begin{equation}\label{def:LNn-ext}
\Lambda_{N,n}^c=\Lambda_{N,n+1}\setminus\Lambda_{N,n}.
\end{equation} 
In the case where 
 ${\mathcal A} = \Lambda_{N,n}^c$, we simplify the notation $\langle f \rangle_{\mathcal A}$  by 
 $\langle f \rangle_{n+1}$. Note that
\begin{equation} \label{4.21bis}
\langle f_{n+1}^t \rangle_{n+1}=f_n^t\, .
\end{equation} 
Following the Kolmogorov forward equation, one has
\begin{equation}\label{kolmogorov}
\partial_t f_n^t = \langle \mathfrak L_{N,n+1}^* f_{n+1}^t \rangle _{n+1},
\end{equation}
where $ \mathfrak L_{N,n}^*$ stands for the adjoint operator of 
$\mathfrak L_{N,n}$ in $L^2(\nu_{\widehat \theta,n}^N)$. 
By \eqref{kolmogorov},
\begin{align}\nonumber
& \partial_t s_n(\mu_n (t)| \nu_{\widehat\theta , n}^N) =  \partial_t \int f_n^t \big( \log f_n^t  \big)
d\nu_{\widehat \theta ,n}^N  
 = \int \big( \log f_n^t \big) \mathfrak L_{N,n+1}^* f_{n+1}^t d\nu_{\widehat \theta ,n+1}^N  \\
 & =  N^2 \int f_{n+1}^t  \mathcal L_{N,n+1} \big(\log f_n^t \big)  d\nu_{\widehat \theta ,n+1}^N 
+ \int f_{n+1}^t  \mathbb L_{N,n+1} \big(\log f_n^t\big)  d\nu_{\widehat \theta ,n+1}^N \nonumber\\
&\qquad + N^2 \int f_{n+1}^t L_{\widehat b,N,n+1} \big(\log f_n^t\big)  d\nu_{\widehat \theta ,n+1}^N 
\label{thelast3}\\
&\  =: \Omega_1\, +\,\Omega_2\, +\, \Omega_3\, .
\end{align}
We   shall first derive useful tools (in Step 1 below), then
obtain (in Steps 2,3,4) the following bounds on the three integrals $\Omega_1$, $\Omega_2$ 
and $\Omega_3$ in terms of the entropy and Dirichlet forms. 
There exists positive constants $C,C''_1,K_2$ such that
\begin{eqnarray}\nonumber
\Omega_1 &\leq&
- \frac{N^2}{2}  
 \mathcal D_n^0 (\mu_n(t) | \nu_{\widehat \theta ,n}^N) +  (Cn+C''_1A N)Nn^{d-2} \\
&& + \frac{N^2}{A}\sum_{\ell=1}^N\sum\limits_{k=2}^d 
\sum\limits_{(x,x+e_k) \in (\Lambda_{N,n}\times\Lambda_{N,n}^c)
\cup (\Lambda_{N,n}^c\times\Lambda_{N,n})}  
({\mathcal D}^0_{n+\ell})^{x,x+e_k}
(\mu_{n+\ell}(t) | \nu_{\widehat \theta ,n+\ell}^N)
\label{entbound_int1}\, ,\\
\label{entbound_int2}
\Omega_2 &\leq& - \mathbb D_n (\mu_n(t) | \nu_{\widehat \theta  ,n}^N)  + K_2 Nn^{d-1}\, ,\\
\label{entbound_int3}
\Omega_3 &\leq& - N^2 D_n^{\widehat b}(\mu_n (t)| \nu_{\widehat \theta  ,n}^N)\, .
\end{eqnarray}
The constant $A$ comes from \eqref{ineq2} in Step 1 below.
Gathering \eqref{entbound_int1}, \eqref{entbound_int2} and \eqref{entbound_int3} gives 
\begin{eqnarray}\nonumber
\partial_t s_n (\mu_n(t) | \nu_{\widehat \theta ,n}^N) 
&\leq& -\frac{N^2}{2} \mathcal D^0_n (\mu_n(t)|\nu_{\widehat \theta(\cdot),n}^N) \\ \nonumber
&&+ \frac{N^2}{A}\sum_{\ell=1}^N\sum\limits_{k=2}^d 
\sum\limits_{(x,x+e_k) \in (\Lambda_{N,n}\times\Lambda_{N,n}^c)
\cup (\Lambda_{N,n}^c\times\Lambda_{N,n})}  
({\mathcal D}^0_{n+\ell})^{x,x+e_k}
(\mu_{n+\ell}(t) | \nu_{\widehat \theta ,n+\ell}^N)\\ 
&&- \mathbb D_n (\mu_n(t) | \nu_{\widehat \theta  ,n}^N)
 -N^2 D_n^{\widehat b}  (\mu_n(t) | \nu_{\widehat \theta ,n}^N)
 + \big( (C + K_2)  n+C''_1A N\big)Nn^{d-2}\, . 
\label{eq:last-for-s}\end{eqnarray}
 Note that for any $M>\!\!> N$ large enough, for some positive constant  $K_1'$,
\begin{equation}\label{dirformalter}
\begin{aligned}
&\sum_{n= 1}^{M}\, \frac{1}{N} e^{- n/N} 
 \sum_{\ell=1}^N\sum\limits_{k=2}^d 
\sum\limits_{(x,y) \in \Lambda_{N,n}\times\Lambda_{N,n}^c\atop
y\in\{x+e_k,x-e_k\}}  
({{\mathcal D}}^0_{n+\ell})^{x,y} 
({\mu}_{n+\ell}(t) | {\nu}_{\widehat \theta ,n+\ell}^N)
\\
&\qquad\qquad\qquad\qquad
\le  K_1' \sum_{n= 1}^{M +N}\, \frac{1}{N} e^{- n/N}
{{\mathcal D}}^0_n ({\mu}_n(t)|{\nu}_{\widehat \theta,n}^N)
\; \le  K_1'\mathfrak D(\mu (t)|\nu_{\widehat\theta}^N)\, .
\end{aligned}
\end{equation}
To conclude the proof of the Lemma it remains to multiply \eqref{eq:last-for-s} 
by $N^{-1}\exp(-n/N)$ and sum over $n \in \mathbb N$ to write an upper bound for
$\partial_t \mathcal S (\mu(t)|\nu_{\widehat\theta}^N)$. 
Using \eqref{dirformalter} and choosing (for instance) $A=4K_1'$ on one hand, 
and dealing with the last term on the r.h.s. of 
\eqref{eq:last-for-s} as in \eqref{entropy_Nd-detail} on the other hand,
we get \eqref{ent_bound:eq}.  \\ \\
\noindent\textit{Step 1: Tools.}
To do changes of variables,
it is convenient to write \eqref{def:ref-measure} as follows:
\begin{eqnarray}\label{def:ref-measure-bis}
\nu_{\widehat \theta(\cdot),n}^N (\xi,\omega) = 
\exp \Big\{ \sum_{j=0}^3 \sum_{x\in \Lambda_{N,n}} \vartheta_j(x/N) \eta_j(x) \Big\}\qquad
\hbox{  with}\qquad\vartheta_j(x/N) =\log \theta_j(x/N)\,   \label{def:var-et-theta}\, .
\end{eqnarray}
\noindent$\bullet$\textit{ Changes of variables formulas}: For a cylinder function $f$
 on  $\widehat \Sigma_{N}$,  and $x,y\in\Lambda_N$, we have
\begin{itemize} 
 \item[(i)]
for $(i,j) \in \{0,1,2,3\}^2$ such that $ i\not=j$,
\begin{equation}\label{change_stir}
\int \eta_i(x) \eta_j(y) f(\xi^{x,y}, \omega^{x,y}) d\nu_{\widehat \theta }^N (\xi,\omega) 
 = \int \eta_j(x) \eta_i(y) 
 (R_{i,j}^{x,y} (\widehat \theta)+1) f(\xi, \omega) d\nu_{\widehat \theta }^N (\xi,\omega)
\end{equation}
\begin{eqnarray}\label{eq:Rij}
\mbox{where}&& R_{i,j}^{x,y} (\widehat \theta)  = 
\exp \Big((\vartheta_j(y/N) - \vartheta_j(x/N))- (\vartheta_i(y/N)-\vartheta_i(x/N)) \Big) - 1\\
\mbox{satisfies}&& R_{i,j}^{x,y} (\widehat \theta)  = O(N^{-1}). \label{Rij-Taylor}
\end{eqnarray} 
 \item[(ii)] for  $(i,j) \in \{(1,2),(2,1),(3,0),(0,3)\}$,
 \begin{equation}\label{change_jump1}
  \int \eta_i(x) f(\sigma^x\xi, \sigma^x\omega) d\nu_{\widehat \theta }^N (\xi,\omega)  
 = \int \eta_j(x) e^{\vartheta_i(x/N) 
 - \vartheta_j(x/N)} f(\xi, \omega) d\nu_{\widehat \theta }^N (\xi,\omega), 
 \end{equation}
\item[(iii)] for  $(i,j) \in \{(1,0),(0,1),(3,2),(2,3)\}$,
\begin{equation}\label{change_jump2}
 \int \eta_i(x) f(\sigma^x\xi, \omega) d\nu_{\widehat \theta }^N (\xi,\omega) 
 = \int \eta_j(x) e^{\vartheta_i(x/N)
  - \vartheta_j(x/N)} f(\xi, \omega) d\nu_{\widehat \theta }^N (\xi,\omega),
 \end{equation}
 \item[(iv)] for  $(i,j) \in \{(1,3),(3,1),(2,0),(0,2)\}$,
\begin{equation}\label{change_jump3}
 \int \eta_i(x) f(\xi, \sigma^x\omega) d\nu_{\widehat \theta }^N (\xi,\omega)  
= \int \eta_j(x) e^{\vartheta_i(x/N) 
- \vartheta_j(x/N)} f(\xi, \omega) d\nu_{\widehat \theta }^N (\xi,\omega) \, .
 \end{equation}
  \end{itemize}

    \noindent$\bullet$\textit{ Inequalities}:  For any positive $a,b,A$, 
\begin{eqnarray}\label{ineq1}
a (\log b - \log a) &\leq& - \big( \sqrt{b} - \sqrt{a} \big)^2 + (b-a),\\
\label{ineq:encore}
\log a &\leq& 2 (\sqrt{a} - 1),\\
2ab &\leq& \dfrac{N}{A} a^2 + \dfrac{A}{N} b^2.\label{ineq2}
\end{eqnarray}
\begin{consequence}\label{csq:chg-var}
  \begin{itemize} 
 \item[\textit{(i)}] 
For all $x$, $1\le k\le d$ such that $x,x+e_k\in \Lambda_{N,n}$
we have, 
\begin{align*}
& 
\int  \mathcal L^{x,x+e_k} f(\xi,\omega) d\nu_{\widehat \theta,n}^N (\xi,\omega) 
= \dfrac{1}{2}\int  \mathcal L^{x,x+e_k} f(\xi,\omega) d\nu_{\widehat \theta,n}^N (\xi,\omega)
\\
&\quad+\dfrac{1}{2}\sum\limits_{0\le i\not=j\le 3}
\int \eta_j(x) \eta_i(x+e_k)(R_{i,j}^{x,x+e_k}(\widehat \theta)+1)
\Big(f(\xi,\omega)-f(\xi^{x,x+e_k},\omega^{x,x+e_k}) \Big)  
 d\nu_{\widehat \theta,n}^N (\xi,\omega)\\
& =  -\dfrac{1}{2}  \sum\limits_{0\le i\not=j\le 3} \int \eta_j(x) 
\eta_i(x+e_k)   R_{i,j}^{x,x+e_k}(\widehat \theta) 
  \Big(f(\xi^{x,x+e_k},\omega^{x,x+e_k})- f(\xi,\omega) \Big)  
 d\nu_{\widehat \theta,n}^N (\xi,\omega)\\
&\le \dfrac{1}{2} (\mathcal D_{n}^0 )^{x,x+e_k}(f)\\
&\quad+\dfrac{1}{8}\sum\limits_{0\le i\not=j\le 3} \int \eta_j(x) 
\eta_i(x+e_k)   \big(R_{i,j}^{x,x+e_k}(\widehat \theta)\big)^2 
\Big(\sqrt{f(\xi^{x,x+e_k},\omega^{x,x+e_k})}+\sqrt{ f(\xi,\omega)} \Big)^2
 d\nu_{\widehat \theta,n}^N (\xi,\omega)\\  
 &\le  \dfrac{1}{2} (\mathcal D_{n}^0 )^{x,x+e_k}(f)\\  
&\quad+ \dfrac{1}{8}\sum\limits_{0\le i\not=j\le 3} \int \eta_j(x) 
\eta_i(x+e_k)   \big(R_{i,j}^{x,x+e_k}(\widehat \theta)\big)^2 
 \Big(3 f(\xi^{x,x+e_k},\omega^{x,x+e_k})
 +\dfrac{3}{2} f(\xi,\omega) \Big) 
 d\nu_{\widehat \theta,n}^N (\xi,\omega) \\ 
  &\le  \dfrac{1}{2} (\mathcal D_{n}^0 )^{x,x+e_k}(f)\\  
 &\quad+ \dfrac{3}{8}\sum\limits_{0\le i\not=j\le 3} \int \eta_j(x+e_k) 
\eta_i(x)   \big(R_{i,j}^{x,x+e_k}(\widehat \theta)\big)^2 
 (R_{j,i}^{x+e_k,x}(\widehat \theta)+1)   f(\xi,\omega)
 d\nu_{\widehat \theta,n}^N (\xi,\omega)\\
  &\quad+ \dfrac{3}{16}\sum\limits_{0\le i\not=j\le 3} 
  \int \eta_j(x) \eta_i(x+e_k)   \big(R_{i,j}^{x,x+e_k}(\widehat \theta)\big)^2
   f(\xi,\omega) d\nu_{\widehat \theta,n}^N (\xi,\omega)\\
&  \leq 
\dfrac{1}{2}(\mathcal D_{n}^0 )^{x,x+e_k}(f)
 + 
 C  N^{-2} \| \sqrt f\|_{L^2(\nu_{\widehat \theta,n}^N)}^2 
\end{align*} 
for some positive constant $C$, where we  have used the change of variables formula
\eqref{change_stir} for the first equality and  last but one  inequality, 
\eqref{ineq2} with $A=N/2$
for the first and second inequalities (there we have first expanded the square),  
and \eqref{Rij-Taylor} 
twice with \eqref{eq:cC} for the last inequality.
 \item[\textit{(ii)}] Because the restriction of $\widehat\theta$ to $\Gamma$ is equal to $\widehat b$
 (see \eqref{b-for-rev}),
 \eqref{change_jump1}--\eqref{change_jump3} yield that
 the measure $\nu_{\widehat \theta}^N$ is reversible with respect 
  to the operator $L_{\widehat b ,N}$.
  \end{itemize}
  \end{consequence}
\noindent\textit{Step 2: Bound on $\Omega_1$}. We decompose the generator 
$\mathcal L_{N,n+1}$ into a part associated to exchanges within $\Lambda_{N,n}$ 
and a part associated to exchanges at the boundaries, that is,  
\begin{eqnarray}\nonumber
\Omega_1 
& = & N^2 \sum\limits_{k=1}^d\sum\limits_{(x,x+e_k) \in \Lambda_{N,n} \times \Lambda_{N,n}}  
 \int f_{n+1}^t \mathcal L^{x,x+e_k} (\log f_n^t) d\nu_{\widehat \theta ,n+1}^N \\ \nonumber
&&   + N^2 \sum\limits_{k=2}^d
\sum\limits_{(x,x+e_k) \in (\Lambda_{N,n}\times\Lambda_{N,n}^c)
\cup (\Lambda_{N,n}^c\times\Lambda_{N,n})}  
\int f_{n+1}^t \mathcal L^{x,x+e_k} (\log f_n^t) d\nu_{\widehat \theta ,n+1}^N\\
& = & N^2 \sum\limits_{k=1}^d\sum\limits_{(x,x+e_k) \in \Lambda_{N,n} \times \Lambda_{N,n}} 
 \Omega_1^{(1)} (x,x+e_k)\label{two-parts-1}\\  
& & + N^2 \sum\limits_{k=2}^d
\sum\limits_{(x,x+e_k) \in (\Lambda_{N,n}\times\Lambda_{N,n}^c)
\cup (\Lambda_{N,n}^c\times\Lambda_{N,n})}  
\Omega_1^{(2)}(x,x+e_k)\label{two-parts-2}\, .
\end{eqnarray}
 Successively,  for the term \eqref{two-parts-1}, writing $y=x+e_k$,
\begin{align}
& \Omega_1^{(1)} (x,y) = 
\int f_{n+1}^t (\xi,\omega) \Big( \log f_n^t(\xi^{x,y},\omega^{x,y}) 
- \log f_n^t (\xi,\omega) \Big) d\nu_{\widehat \theta  ,n+1}^N (\xi,\omega) \nonumber\\
 & \qquad =    \int \langle  f_{n+1}^t (\xi,\omega)\rangle_{n+1} \log 
\dfrac{ f_n^t(\xi^{x,y},\omega^{x,y})}{ f_n^t (\xi,\omega)} 
d\nu_{\widehat \theta  ,n}^N (\xi,\omega) \nonumber\\ 
& \qquad \leq - (\mathcal D_n^0)^{x,y} (\mu_n(t) | \nu_{\widehat \theta ,n}^N) 
+ \int \mathcal L^{x,y}f_n^t(\xi,\omega) d\nu_{\widehat \theta ,n}^N(\xi,\omega)\nonumber \\
&\qquad \leq  - \frac 12 (\mathcal D_n^0)^{x,y} (\mu_n(t) | \nu_{\widehat \theta ,n}^N) 
+ \frac{C}{N^2}\,\, ,
\label{successively}
\end{align}
 where we used \eqref{4.21bis} and \eqref{ineq1} for the first inequality
and Consequences \ref{csq:chg-var}\textit{(i)} combined with the fact that
$f_n^t$ is a probability density for the second one. \\ 

For the part \eqref{two-parts-2} associated to the boundaries, we shall write 
for each pair $(x,y)= (x,x+e_k)\in (\Lambda_{N,n} \times\Lambda_{N,n}^c)\cup
(\Lambda_{N,n}^c \times\Lambda_{N,n})$,
\begin{equation}\label{gen_ij}
\mathcal L^{x,y} = \sum\limits_{0\le i\not= j\le 3} \mathcal L^{x,y}_{i \leftrightarrow j} 
\end{equation}
where $\mathcal L^{x,y}_{i \leftrightarrow j}$ stands for the exchange of values $i$ and $j$
between sites $x$ and $y$.
 \begin{equation}\label{eq:L_ij}
\begin{aligned}
\mathcal L^{x,y}_{i \leftrightarrow j}  f(\xi,\omega)
 &= \eta_i(x) \eta_j(y)\Big( f(\xi^{x,y},\omega^{x,y}) 
 - f(\xi,\omega) \Big) +\eta_j(x)\eta_i(y) \Big( f(\xi^{x,y},\omega^{x,y}) - f(\xi,\omega) \Big)\, .
 \end{aligned}
\end{equation}
So that,  
\begin{eqnarray}\nonumber
\Omega_1^{(2)}  (x,y)
 & = & \sum\limits_{0\le i\not= j\le 3}  
 \int \eta_i(x)\eta_j(y) f_{n+1}^t (\xi,\omega) \log \dfrac{f_n^t(\xi^{x,y},\omega^{x,y})}{f_n^t(\xi,\omega)}  
 d\nu_{\widehat \theta  ,n+1}^N (\xi,\omega)\\ 
&& + \sum\limits_{0\le i\not= j\le 3}  \int \eta_j(x)\eta_i(y) f_{n+1}^t (\xi,\omega) 
\log \dfrac{f_n^t(\xi^{x,y},\omega^{x,y})}{f_n^t(\xi,\omega)} d\nu_{\widehat \theta  ,n+1}^N (\xi,\omega)\, .
\label{so-that-last-rhs}
\end{eqnarray}
Let us detail the computation for $(x,y)\in \Lambda_{N,n} \times\Lambda_{N,n}^c$ and $i=1$, $j=3$, the other values would be deduced in a similar way.
By a change of variables $(\xi',\omega') = (\xi^{x,y},\omega^{x,y})$ 
in the integral corresponding to $i=1,j=3$ in the second term of the 
r.h.s. \eqref{so-that-last-rhs},  
using \eqref{change_stir},\eqref{eq:Rij}, and noticing 
that if $\eta_1(x)\eta_3(y)=1$ then $\xi^{x,y}=\xi$, 
and that since $f_n^t$ does not depend on $y$, 
$f_n^t (\xi,\omega^{x,y})=f_n^t (\xi,\sigma^x \omega)$, we have
\begin{align}\nonumber 
& \int f_{n+1}^t(\xi,\omega) \mathcal L^{x,y}_{1 \leftrightarrow 3}(\log f_n^t (\xi,\omega))
d\nu_{\widehat \theta  ,n+1}^N (\xi,\omega)
 =  \int \eta_1(x)  \eta_3(y) f_{n+1}^t (\xi,\omega)
\log \dfrac{f_n^t(\xi^{x,y},\omega^{x,y})}{f_n^t(\xi,\omega)}  d\nu_{\widehat \theta  ,n+1}^N (\xi,\omega)\\ \nonumber 
&  \qquad+ \int  \eta_1(x) \eta_3(y) (R_{1,3}^{x,y} (\widehat \theta)+1)
 f_{n+1}^t (\xi^{x,y},\omega^{x,y})  \log \dfrac{f_n^t(\xi,\omega)}{f_n^t(\xi^{x,y},\omega^{x,y})} 
d\nu_{\widehat \theta  ,n+1}^N (\xi,\omega)\\\nonumber 
& \quad =  \int \eta_1(x) \langle \eta_3(y) f_{n+1}^t (\xi,\omega) \rangle_{n+1} 
\log \dfrac{f_n^t(\xi,\sigma^x \omega)}{f_n^t(\xi,\omega)}  d\nu_{\widehat \theta  ,n}^N (\xi,\omega)+O(N^{-1})\\ \nonumber 
&\qquad  + \int  \eta_1(x) \langle \eta_3(y) f_{n+1}^t (\xi,\omega^{x,y}) \rangle_{n+1}
\log \dfrac{f_n^t(\xi,\omega)}{f_n^t(\xi,\sigma^x \omega)} d\nu_{\widehat \theta  ,n}^N (\xi,\omega)\\
& \quad = \int \eta_1(x) \Big( \langle F_{1,3}^{(1)}(\xi,\omega) \rangle_{n+1} 
- \langle F_{1,3}^{(2)}(\xi,\omega) \rangle_{n+1} \Big) 
\log \dfrac{f_n^t(\xi,\sigma^x \omega)}{f_n^t(\xi,\omega)}  
d\nu_{\widehat \theta  ,n}^N (\xi,\omega) + O(N^{-1})\, ,\label{ineq:last-int}
\end{align}
where  we used \eqref{Rij-Taylor} in the second equality, and where
\begin{equation}\label{Fij}
F_{i,j}^{(1)} (\xi,\omega)= \eta_j(y) f_{n+1}^t (\xi,\omega) \, ,
\quad F_{i,j}^{(2)}(\xi,\omega) = \eta_j(y) f_{n+1}^t (\xi^{x,y},\omega^{x,y})\,.
\end{equation}
If we now define
\begin{eqnarray}
E_1(i,j) &:=& \{(\xi,\omega) :\langle F_{i,j}^{(1)}(\xi,\omega) \rangle_{n+1}
\geq \langle F_{i,j}^{(2)}(\xi,\omega) \rangle_{n+1} ,  
{f_n^t(\xi,\sigma^x \omega)} \geq {f_n^t(\xi,\omega)} \}\label{def:E1} \\
E_2(i,j) &:=&\{(\xi,\omega) : \langle F_{i,j}^{(1)}(\xi,\omega) \rangle_{n+1} 
\leq \langle F_{i,j}^{(2)}(\xi,\omega) \rangle_{n+1} , 
{f_n^t(\xi,\sigma^x \omega)} \leq {f_n^t(\xi,\omega)} \} \label{def:E2}
\end{eqnarray}
the integral in the r.h.s. of \eqref{ineq:last-int}
is non-negative on $E_1(1,3) \cup E_2(1,3)$. Then, thanks to the inequalities
\eqref{ineq:encore}--\eqref{ineq2},  for $A$ to be chosen later,  
the integral in the r.h.s. of \eqref{ineq:last-int} is bounded by
\begin{align}\nonumber
& \int_{E_1(1,3) \cup E_2(1,3)} \eta_1(x) \Big( \langle F_{1,3}^{(1)}(\xi,\omega) \rangle_{n+1} 
- \langle F_{1,3}^{(2)}(\xi,\omega) \rangle_{n+1} \Big) 
\log \dfrac{f_n^t(\xi,\sigma^x \omega)}{f_n^t(\xi,\omega)} 
d\nu_{\widehat \theta  ,n}^N (\xi,\omega) \\\nonumber
& \begin{multlined} \quad \leq 2 \int_{E_1(1,3) \cup E_2(1,3)} 
 \eta_1(x) \Big( \langle F_{1,3}^{(1)}(\xi,\omega) \rangle_{n+1} 
  - \langle F_{1,3}^{(2)}(\xi,\omega)  \rangle_{n+1} \Big) 
\times \Big(\sqrt{\dfrac{f_n^t(\xi,\sigma^x \omega)}{f_n^t(\xi,\omega)}}-1\Big)  
d\nu_{\widehat \theta  ,n}^N (\xi,\omega) \end{multlined} \\
& \begin{multlined} \quad \leq \dfrac{N}{A} \int_{E_1(1,3) \cup E_2(1,3)} 
 \eta_1(x) 
\Big( \sqrt{\langle F_{1,3}^{(1)}(\xi,\omega) \rangle_{n+1}} -
\sqrt{\langle F_{1,3}^{(2)}(\xi,\omega)  \rangle_{n+1}} \Big)^2 d\nu_{\widehat \theta  ,n}^N (\xi,\omega) \\
 + \dfrac{A}{N} \int_{E_1(1,3) \cup E_2(1,3)}  
 \Big( \sqrt{\langle F_{1,3}^{(1)}(\xi,\omega) \rangle_{n+1}} 
 + \sqrt{\langle F_{1,3}^{(2)}(\xi,\omega)  \rangle_{n+1}} \Big)^2 
  \times\Big(\sqrt{\dfrac{f_n^t(\xi,\sigma^x \omega)}{f_n^t(\xi,\omega)}}-1\Big)^2  
d\nu_{\widehat \theta  ,n}^N (\xi,\omega)\end{multlined}\nonumber\\ 
&\quad=: I_1+I_2\label{last-rhs} \, .
 \end{align} 
 In order to get rid of $N$ in $I_1$,
we use \eqref{4.21bis} to introduce a new sum in $m$, and to rewrite 
$I_1$ as
\begin{equation}\label{new-sum}
 \begin{aligned}
 I_1  =  \dfrac{N}{A} \dfrac{1}{N} \sum\limits_{m=n+1}^{n+N} 
\int_{E_1(1,3) \cup E_2(1,3)} \eta_1(x) \Big( \sqrt{\langle \eta_3(y) 
f_m^t(\xi^{x,y},\omega^{x,y}) \rangle_{\Lambda_{N,m}\setminus\Lambda_{N,n}}} \\ 
- \sqrt{\langle \eta_3(y)f_m^t(\xi,\omega)  \rangle_{\Lambda_{N,m}\setminus\Lambda_{N,n}}} \Big)^2
d\nu_{\widehat \theta  ,m}^N (\xi,\omega).
 \end{aligned}
\end{equation}
We now apply Cauchy-Schwarz inequality to bound $I_1$ by a piece of the specific Dirichlet form, 
\begin{align}
&\begin{multlined} I_1\leq  \dfrac{1}{A} \sum\limits_{m=n+1}^{n+N} 
\int_{E_1(1,3) \cup E_2(1,3)}  \eta_1(x) \Big\langle\eta_3(y)
\Big( \sqrt{ f_m^t(\xi^{x,y},\omega^{x,y})} 
- \sqrt{ f_m^t(\xi,\omega)}  \Big)^2 \Big\rangle_{\Lambda_{N,m}\setminus\Lambda_{N,n}}
 d\nu_{\widehat \theta  ,m}^N (\xi,\omega) \end{multlined} \nonumber\\
&  =  \dfrac{1}{A} \sum\limits_{m=n+1}^{n+N} 
\int_{E_1(1,3) \cup E_2(1,3)} \eta_1(x)\eta_3(y) 
\Big( \sqrt{ f_m^t(\xi^{x,y},\omega^{x,y})}  - \sqrt{ f_m^t(\xi,\omega)}  \Big)^2  
 d\nu_{\widehat \theta  ,m}^N (\xi,\omega)
 \nonumber \\ 
& \begin{multlined}  \leq  \dfrac{1}{A} \sum\limits_{m=n+1}^{n+N} 
\int
\eta_1(x)\eta_3(y)\Big( \sqrt{ f_m^t(\xi^{x,y},\omega^{x,y})} 
 - \sqrt{ f_m^t(\xi,\omega)}  \Big)^2  d\nu_{\widehat \theta  ,m}^N (\xi,\omega) 
 \end{multlined} \, .
\label{eq:bd1}\end{align}
  Now, to bound $I_2$, we separate the integrations on
 $E_1(1,3)$  and on $E_2(1,3)$. We first look at the integral on $E_1(1,3)$, to get 
\begin{align}
& \begin{multlined}\dfrac{A}{N} \int_{E_1(1,3)} \eta_1(x)
\Big( \sqrt{\langle F_{1,3}^{(1)}(\xi,\omega) 
\rangle_{n+1}} + \sqrt{\langle F_{1,3}^{(2)}(\xi,\omega)  \rangle_{n+1}} \Big)^2
 \Big(\sqrt{\dfrac{f_n^t(\xi,\sigma^x \omega)}{f_n^t(\xi,\omega)}}-1\Big)^2  
d\nu_{\widehat \theta  ,n}^N (\xi,\omega) \end{multlined}\nonumber\\
& \quad \leq \dfrac{4A}{N} \int_{E_1(1,3)} \eta_1(x) 
\dfrac{\langle F_{1,3}^{(1)}(\xi,\omega) \rangle_{n+1}}
{f_n^t(\xi,\omega)} \Big(\sqrt{f_n^t(\xi,\sigma^x \omega)} 
- \sqrt{f_n^t(\xi,\omega)}\Big)^2  d\nu_{\widehat \theta  ,n}^N (\xi,\omega)\nonumber \\
& \quad \leq \dfrac{4A}{N} \int_{E_1(1,3)} \eta_1(x)\Big(f_n^t(\xi,\sigma^x \omega) 
- 2\sqrt{f_n^t(\xi,\sigma^x \omega)}\sqrt{f_n^t(\xi,\omega)}+f_n^t(\xi,\omega)\Big)  d\nu_{\widehat \theta  ,n}^N (\xi,\omega) \nonumber\\
& \quad \leq \dfrac{4A}{N} \int_{E_1(1,3)} \eta_1(x)\Big(f_n^t(\xi,\sigma^x \omega) 
-f_n^t(\xi,\omega)\Big)  d\nu_{\widehat \theta  ,n}^N (\xi,\omega) \nonumber\\
& \quad \leq \dfrac{4A}{N} \int \eta_1(x)f_n^t(\xi,\sigma^x \omega) 
  d\nu_{\widehat \theta  ,n}^N (\xi,\omega)   = \dfrac{4A}{N}\int \eta_3(x) e^{(\vartheta_1(x/N) - \vartheta_3(x/N))}f_n^t(\xi, \omega)
   d\nu_{\widehat \theta  ,n}^N (\xi,\omega) \nonumber\\
& \quad \leq \dfrac{AC_1}{N} \label{cl:E1}
\end{align} 
for some positive constant $C_1$.
We have used the definition \eqref{def:E1} of $E_1(1,3)$ for the first and third inequalities,
 the definition \eqref{Fij} of $F_{1,3}^{(1)}(\xi,\omega)$ with the bound 
 $\langle F_{1,3}^{(1)}(\xi,\omega)  \rangle_{n+1}
 \leq\langle f_{n+1}^t (\xi,\omega)\rangle_{n+1}=f_n^t(\xi,\omega)$ for the second inequality,
 \eqref{change_jump3} for the equality, \eqref{def:var-et-theta}, 
  \eqref{eq:cC}
 and that $f_n^t$ is a probability density to conclude.\\
 
We now look at the integral on $E_2(1,3)$, to get
\begin{align}\nonumber
& \begin{multlined}\dfrac{A}{N} \int_{E_2(1,3)} \eta_1(x)
\Big( \sqrt{\langle F_{1,3}^{(1)}(\xi,\omega) 
\rangle_{n+1}} + \sqrt{\langle F_{1,3}^{(2)}(\xi,\omega)  \rangle_{n+1}} \Big)^2 
\Big(\sqrt{\dfrac{f_n^t(\xi,\sigma^x \omega)}{f_n^t(\xi,\omega)}}-1\Big)^2  
d\nu_{\widehat \theta  ,n}^N (\xi,\omega) \end{multlined}\\ \nonumber
& \quad \leq \dfrac{  4 A}{N} \int_{E_2(1,3)} \eta_1(x) 
\dfrac{\langle F_{1,3}^{(2)}(\xi,\omega) \rangle_{n+1}}
{f_n^t(\xi,\omega)} \Big(\sqrt{f_n^t(\xi,\sigma^x \omega)} 
- \sqrt{f_n^t(\xi,\omega)}\Big)^2  d\nu_{\widehat \theta  ,n}^N (\xi,\omega) \\ \nonumber
& \quad  \leq \dfrac{8 A}{N} \int_{E_2(1,3)} \eta_1(x) 
\dfrac{\langle F_{1,3}^{(2)}(\xi,\omega) \rangle_{n+1}}
{f_n^t(\xi,\omega)}  
f_n^t(\xi,\omega)  d\nu_{\widehat \theta  ,n}^N (\xi,\omega) \\ \nonumber
& \quad \leq \dfrac{  8 A}{N} \int
\eta_3(x) \eta_1(y) (R_{1,3}^{x,y} (\widehat \theta)+1)
 f_{n+1}^t(\xi, \omega)
  d\nu_{\widehat \theta  ,n+1}^N (\xi,\omega) \\
& \quad \leq \dfrac{AC_1'}{N} \label{cl:E2}
\end{align}
 for some positive constant $C'_1$.
We have used the definition \eqref{def:E2} of $E_2(1,3)$ for the first and second inequalities,
 the definition \eqref{Fij} of $F_{1,3}^{(2)}(\xi,\omega)$ with \eqref{change_stir} 
 for the third inequality, and \eqref{def:var-et-theta}, 
  \eqref{eq:cC}
 and finally that $f_n^t$ is a probability density.\\
 %
 Combining \eqref{successively} with \eqref{eq:bd1}, \eqref{cl:E1}, \eqref{cl:E2}, we get 
the upper bound \eqref{entbound_int1} of $\Omega_1$. 
\mbox{}\\ \\
\noindent\textit{Step 3: Bound on $\Omega_2$.} We decompose 
  the generator of the reaction part into a part involving only sites
within $\Lambda_{N,n}$ and a part 
involving sites in $\Lambda_{N,n}^c$. 
Recalling  \eqref{def:gen-Nn}, \eqref{def:gen-x}, 
we have
\begin{eqnarray}\nonumber
\Omega_2 & = & \int f_{n+1}^t \mathbb L_{N,n+1} (\log f_n^t )d\nu_{\widehat \theta , n+1}^N 
=  \int f_{n+1}^t \mathbb L_{N,n} (\log f_n^t) d\nu_{\widehat \theta , n+1}^N 
 + \Omega_2^{(1)} \, .\label{eq:(A)}
\end{eqnarray}
Proceeding as for \eqref{successively}, we get
\begin{eqnarray}
\int f_{n+1}^t \mathbb L_{N,n} \log f_n^t d\nu_{\widehat \theta , n+1}^N
 \leq  - \mathbb D_n (\mu_n(t) | \nu_{\widehat \theta ,n}) + \int \mathbb L_{N,n} f_n^t 
d\nu_{\widehat \theta , n}^N\, .\label{eq:(B)}
\end{eqnarray}
The second term on the r.h.s. is of order $O(Nn^{d-1})$ since the rates $\beta_{\Lambda_{N,n}}(.,.)$
are bounded.
Moreover, denoting $\partial \Lambda_{N,n}=\{x\in\Lambda_{N,n}:\exists y\in\Lambda_{N,n}^c, \|y-x\|=1\}$, 
\begin{eqnarray*}
\Omega_2^{(1)} & = & \sum\limits_{x \in \partial \Lambda_{N,n}}  
\int f_{n+1}^t (\xi,\omega) \Big( \lambda_1 \sum\limits_{y \in\Lambda_{N,n}^c\atop\|y-x\|=1} 
\xi(y)(1-\omega(y)) \\ & & 
+ \lambda_2 \sum\limits_{y \in\Lambda_{N,n}^c\atop\|y-x\|=1} \xi(y)\omega(y) \Big)
 (1-\xi(x))\log \dfrac{f_n^t(\sigma^x\xi, \omega)}{f_n^t (\xi,\omega)} 
 d\nu_{\widehat \theta , n+1}^N(\xi,\omega)  
\end{eqnarray*} 
which can be proved to be of order $O(Nn^{d-2})$  in an analogous way to the computation done for 
$\Omega_1^{(2)}$, and using that the rates  $\beta_{\Lambda_{N,n}}(.,.)$ 
are bounded,  inequalities
  \eqref{ineq:encore}--\eqref{ineq2}.  Combining this with \eqref{eq:(A)} 
yields the upper bound \eqref{entbound_int2} for $\Omega_2$. \\ \\ 
\noindent\textit{Step 4: Bound on $\Omega_3$.} It is in this step that 
the reversibility of the measure $\nu_{\widehat \theta  ,n}^N$ with respect to
the generator $L_{\widehat b ,N,n}$ plays a crucial role. It implies that,
for any $x \in \Lambda_{N,n} \cap \Gamma_N$, 
\begin{equation}\label{int-rev0}
\int L_{\widehat b, N}^x f_n^t d\nu_{\widehat \theta, n}^N=0.
\end{equation}
Since $L_{\widehat b, N,n+1} = \sum\limits_{x \in \Lambda_{N,n+1} \cap \Gamma_N} L_{\widehat b, N}^x$,  
using \eqref{4.21bis} and  inequality \eqref{ineq1}  we derive 
\eqref{entbound_int3} as follows. 
\begin{eqnarray*} \nonumber
\Omega_3 & = & N^2 \sum\limits_{x \in \Lambda_{N,n+1} \cap \Gamma_N} 
\int f_{n+1}^t L_{\widehat b, N}^x (\log f_n^t) d\nu_{\widehat \theta, n+1}^N \\\nonumber
& = &  N^2 \sum\limits_{x \in \Lambda_{N,n} \cap \Gamma_N} 
\int \langle f_{n+1}^t(\xi,\omega)\rangle_{n+1} L_{\widehat b, N}^x (\log f_n^t) d\nu_{\widehat \theta, n}^N \\\nonumber 
& \leq & - N^2 D_n^{\widehat b}(\mu_n (t)| \nu_{\widehat \theta  ,n}^N) 
 + N^2 \sum\limits_{x \in \Lambda_{N,n} \cap \Gamma_N} \int L_{\widehat b, N}^x f_n^t d\nu_{\widehat \theta, n}^N \\ 
& = &- N^2 D_n^{\widehat b}(\mu_n (t)| \nu_{\widehat \theta  ,n}^N)\, , 
\end{eqnarray*} 
where we used that $f_n^t$ does not depend on $\Lambda_{N,n}^c$ for the second equality. 
Thanks to \eqref{int-rev0}, in the last equality we got rid of a term with an order too large in $N$.
\end{proof}
\subsection{Replacement lemma in the bulk
(proof of Lemma \ref{replacement})}\label{subsec:rep-lem}
Fix $G \in \mathcal C_c^\infty ([0,T] \times \Lambda, \mathbb R)$, let $K>0,\delta>0$ 
be such that the (compact) support of $G$ is contained in the box 
$\Lambda(1-\delta,K) := [-1+\delta,1-\delta] \times [-K,K]^{d-1}$.
Let $0<a<\delta/2$, 
and let $\widehat \theta_a = (\theta_{a,1},\theta_{a,2},\theta_{a,3}) :\Lambda \rightarrow (0,1)^3$ 
be a smooth function,  equal in  
$\Lambda(1-a,K)$ 
to some constant, say $\widehat \alpha$, and to $\widehat b$ at the boundaries. 
Therefore
\begin{eqnarray*}
\mathbb E_{\mu_N}^{N,\widehat b} \Big( \dfrac{1}{N^d} \sum\limits_{x \in \Lambda_N} 
 \int_0^T |G_s(x/N)|  \tau_x V_{\epsilon N}(\xi_s,\omega_s) ds \Big)
\leq \|G\|_\infty 
\mathbb E_{\mu_N}^{N,\widehat b} \Bigg( \dfrac{1}{N^d} 
\sum\limits_{x \in \Lambda_{\lfloor N(1-\delta)\rfloor,NK}} 
  \int_0^T \tau_x V_{\epsilon N}(\xi_s,\omega_s) ds \Bigg).
  \label{replacement_eq1}
\end{eqnarray*} 
Denote  $\bar f^T = T^{-1} \displaystyle\int_0^T f_{N(K+2)}^s ds$. 
By Theorem \ref{dirichlet}, there exists some positive constant $C_1$ 
such that the expectation on the above r.h.s. 
 is bounded by 
\begin{equation*}
\dfrac{T }{N^d} \int \sum\limits_{x \in \Lambda_{\lfloor N(1-\delta)\rfloor,NK}} 
\tau_x V_{\epsilon N}(\xi,\omega) 
\bar f^T(\xi,\omega) d\nu_{\widehat \theta_a,N(K+2)}^N (\xi,\omega) 
 - \gamma T N^{2-d}  D_{N(K+2)} (\bar f^T)  + \gamma C_1,
 \end{equation*}
for all positive $\gamma$. 
To prove the Lemma, it thus remains to show that for all positive $\gamma, a$,
\begin{multline*}
\limsup\limits_{ \epsilon \rightarrow 0} \limsup\limits_{N \rightarrow \infty} 
\sup\limits_{f} \Bigg( \dfrac{1 }{N^d} \int \sum\limits_{x \in \Lambda_{\lfloor N(1-\delta)\rfloor,NK}} 
\tau_x V_{\epsilon N}(\xi,\omega) f(\xi,\omega) d\nu_{\widehat \alpha, N(K+2)}^N (\xi,\omega) 
- \gamma  N^{2-d}  D_{N(K+2)} (f) \Bigg)  = 0, 
\end{multline*}
where the supremum is carried over all densities $f$ with respect to  
$\nu_{\widehat \alpha,N(K+2)}^N$ such that $D_{N(K+2)} (f)\le C N^{d-2}$.
This is a consequence of the one and two 
block estimates stated below 
 (see \cite{gpv, kl} for the now standard proofs).
The one block estimate ensures the average of local functions in some large 
microscopic boxes can be replaced by its mean with respect to the grand-canonical measure 
parametrized by the particle density in these boxes. While the two block estimate ensures 
the particle density over large microscopic boxes is close to the one over small macroscopic boxes:
\begin{lemma}[One block estimate]\label{1block}
 Given  a constant profile $\widehat \rho = (\rho_1,\rho_2,\rho_3) \in (0,1)^3$,
\begin{equation*}
\limsup\limits_{k \rightarrow \infty} \limsup\limits_{N \rightarrow \infty} 
\sup\limits_{f : \mathcal D^0_{N(K+2)}(f) \leq CN^{d-2}} \int \dfrac{1}{N^d} 
\sum\limits_{x \in \Lambda_{N,NK}} \tau_x V_k(\xi,\omega) f(\xi,\omega) 
d\nu_{\widehat \rho,{N(K+2)}}^N(\xi,\omega)= 0\, ,
\end{equation*}
where for $k \in \mathbb N$, $V_k (\xi,\omega)$ was defined in \eqref{block-V}.
\end{lemma}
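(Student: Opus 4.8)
The plan is to follow the classical one-block estimate of Guo, Papanicolaou and Varadhan \cite{gpv} (see also \cite[Chapter~5]{kl}), the only structural difference being that the exchange dynamics here conserves the three empirical counts $\sum_x\eta_1(x)$, $\sum_x\eta_2(x)$, $\sum_x\eta_3(x)$ inside any box, so that the relevant canonical measures are indexed by a triple $\widehat K=(K_1,K_2,K_3)$ rather than by a single integer. First I would \emph{localise}. Since the profile $\widehat\rho$ is constant, $\nu_{\widehat\rho,N(K+2)}^N$ is translation invariant, so for $x\in\Lambda_{N,NK}$ one has $\int\tau_x V_k\,f\,d\nu_{\widehat\rho,N(K+2)}^N=\int V_k\,\tau_{-x}f\,d\nu_{\widehat\rho,N(K+2)}^N$ up to a boundary correction of order $k/N$ (coming from the $O(kN^{d-1})$ sites of $\Lambda_{N,NK}$ within distance $k$ of the caps $\{x_1=\pm N\}$). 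Averaging over $x$ and using convexity of $f\mapsto\int V_k f\,d\nu_{\widehat\rho,N(K+2)}^N$, the quantity to estimate is bounded, up to $o(1)$, by $\int V_k\,\bar f\,d\nu_{\widehat\rho,N(K+2)}^N$ with $\bar f=|\Lambda_{N,NK}|^{-1}\sum_{x\in\Lambda_{N,NK}}\tau_x f$. As $V_k$ is a \emph{local} function, depending only on the coordinates in the cube $\mathcal Q_\ell=\{-\ell,\dots,\ell\}^d$ with $\ell=k+r$ ($r$ being the range of $\phi$), this equals $\int V_k\,(\bar f)_{\mathcal Q_\ell}\,d\nu_{\widehat\rho,\mathcal Q_\ell}^N$, the marginal $(\bar f)_{\mathcal Q_\ell}$ of $\bar f$ on $\mathcal Q_\ell$; and translation invariance of $\nu_{\widehat\rho}^N$ together with the convexity and translation covariance of the bond terms of $\mathcal D^0$ gives, by the standard bound, $\mathcal D^0_{\mathcal Q_\ell}\big((\bar f)_{\mathcal Q_\ell}\big)\le C\,\ell^d\,|\Lambda_{N,NK}|^{-1}\,\mathcal D^0_{N(K+2)}(f)\le C'\ell^dN^{-2}$, which vanishes as $N\to\infty$ for fixed $k$.

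Next I would send $N\to\infty$ at fixed $k$. The densities $(\bar f)_{\mathcal Q_\ell}$ live on the finite configuration space attached to $\mathcal Q_\ell$, hence form a relatively compact family, and any limit point $g$ — a density with respect to $\nu_{\widehat\rho,\mathcal Q_\ell}^N$ — satisfies $\mathcal D^0_{\mathcal Q_\ell}(g)=0$ by lower semicontinuity of the Dirichlet form. Because the exchange dynamics restricted to $\mathcal Q_\ell$ is irreducible on each hyperplane $\big\{\sum_{x\in\mathcal Q_\ell}\widehat\eta(x)=\widehat K\big\}$, the vanishing of $\mathcal D^0_{\mathcal Q_\ell}(g)$ forces $g$ to be constant on each such hyperplane, i.e.\ $g$ is a convex combination of the canonical measures $\nu_{\mathcal Q_\ell,\widehat K}$ obtained by conditioning $\nu_{\widehat\rho,\mathcal Q_\ell}^N$ on $\big\{\sum_{x\in\mathcal Q_\ell}\widehat\eta(x)=\widehat K\big\}$. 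Hence
$$
\limsup_{N\to\infty}\ \sup_{f:\,\mathcal D^0_{N(K+2)}(f)\le CN^{d-2}}\
\frac1{N^d}\sum_{x\in\Lambda_{N,NK}}\int\tau_x V_k\,f\,d\nu_{\widehat\rho,N(K+2)}^N
\ \le\ \sup_{\widehat K}\ \int V_k\,d\nu_{\mathcal Q_\ell,\widehat K}\,,
$$
the last supremum running over all admissible count vectors $\widehat K$.

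It then remains to let $k\to\infty$ in the right-hand side, and this is where the equivalence of ensembles enters. Under $\nu_{\mathcal Q_\ell,\widehat K}$ the block density $\widehat\eta^k(0)$ concentrates around $\widehat K/|\mathcal Q_\ell|$, while the equivalence of ensembles together with the $L^2$ law of large numbers for $\nu_{\mathcal Q_\ell,\widehat K}$ shows that the block average $(2k+1)^{-d}\sum_{\|y\|\le k}\tau_y\phi$ concentrates in $L^1(\nu_{\mathcal Q_\ell,\widehat K})$ around $\widetilde\phi\big(\widehat K/|\mathcal Q_\ell|\big)$; therefore $\int V_k\,d\nu_{\mathcal Q_\ell,\widehat K}\to0$ as $k\to\infty$, \emph{uniformly} in $\widehat K$, which is the assertion. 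The step I expect to be the main obstacle is precisely this uniformity of the equivalence of ensembles over all count vectors $\widehat K$, including the degenerate ones on the boundary of the density simplex — for those one replaces the quantitative bound by the crude uniform estimate $\|\phi\|_\infty$, which merely keeps $V_k$ bounded — while the $O(r/k)$ discrepancy caused by requiring $y+\mathrm{supp}(\phi)\subset\mathcal Q_\ell$ is harmless. All of this goes through exactly as in \cite{gpv} and \cite[Chapter~5]{kl}, the three-dimensional conservation law only enlarging the index set of the canonical measures.
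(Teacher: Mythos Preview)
Your proposal is correct and follows precisely the classical Guo--Papanicolaou--Varadhan argument that the paper itself defers to: the authors do not prove Lemma~\ref{1block} but simply write ``see \cite{gpv, kl} for the now standard proofs.'' Your adaptation to the three-species setting (canonical measures indexed by a triple $\widehat K$, irreducibility of the exchange dynamics on each hyperplane of fixed $\widehat K$, equivalence of ensembles uniformly over admissible $\widehat K$) is exactly what is needed, and the boundary correction near $\{x_1=\pm N\}$ is handled as you indicate.
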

\begin{lemma}[Two block estimate]\label{2block}
Given a constant profile $\widehat \rho = (\rho_1,\rho_2,\rho_3) \in (0,1)^3$, for $i\in\{1,2,3\}$,
\begin{multline*}
\limsup\limits_{k \rightarrow \infty} \limsup\limits_{\epsilon \rightarrow 0} 
\limsup\limits_{N \rightarrow \infty} \sup\limits_{f : \mathcal D^0_{N(K+2)} (f) \leq CN^{d-2}} \\ 
\sup\limits_{|h| \leq \epsilon N} \dfrac{1}{N^d} \int \sum\limits_{x \in \Lambda_{N,NK}} 
\big|  \eta_i^k (x+h)-  \eta_i^{\epsilon N}(x) \big| f(\xi,\omega) 
d\nu_{\widehat \rho,{N(K+2)}}^N (\xi,\omega) = 0.
\end{multline*}
\end{lemma}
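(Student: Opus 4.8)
The plan is to follow the classical two blocks argument of \cite{gpv,kl}, which in the present setting is especially transparent for two reasons: the reference measure $\nu^N_{\widehat\rho}$ has \emph{constant} profile, so it is a product of identical site-marginals and is invariant under every transposition $(\xi,\omega)\mapsto(\xi^{x,y},\omega^{x,y})$; and under $\mathcal L_N$ the $\xi$- and $\omega$-particles do not interact, each performing the natural exclusion dynamics. As in Lemma \ref{1block}, only $\mathcal D^0_{N(K+2)}$ enters, through the uniform bound $\mathcal D^0_{N(K+2)}(f)\le CN^{d-2}$ of Theorem \ref{dirichlet}.

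\textbf{Step 1: reduction of the two scales.} Writing $B_k(x)=\{y\in\Lambda_N:\|y-x\|\le k\}$, a direct count gives $\eta_i^{\epsilon N}(x)=|B_{\epsilon N}|^{-1}\sum_{y\in B_{\epsilon N}(x)}\eta_i^k(y)+R(x)$ with $|R(x)|\le Ck/(\epsilon N)$, the remainder coming only from the $O((\epsilon N)^{d-1}k)$ sites in the boundary layer of $B_{\epsilon N}(x)$. Since the integrand in the statement is bounded by $1$, the $x$'s in an $O(\epsilon N)$-wide strip along $\{u_1=\pm N\}$ contribute $O(\epsilon)$ to $N^{-d}\sum_x(\cdot)$; discarding them and using the triangle inequality, it suffices to bound, uniformly in $\|h\|\le\epsilon N$ and in admissible $f$,
\[
\frac{1}{N^d}\sum_x\frac{1}{|B_{\epsilon N}|}\sum_{y\in B_{\epsilon N}(x)}\int\bigl|\eta_i^k(x+h)-\eta_i^k(y)\bigr|\,f\,d\nu^N_{\widehat\rho}\;+\;O\!\Bigl(\epsilon+\tfrac{k}{\epsilon N}\Bigr).
\]
Setting $y=x+z$ with $\|z\|\le\epsilon N$ and $w=x+h$, and over-estimating the resulting index set for $w$ by $\Lambda_{N,N(K+1)}$, the first term is at most
\[
\sup_{\|z'\|\le 2\epsilon N}\;\frac{1}{N^d}\sum_{w\in\Lambda_{N,N(K+1)}}\int\bigl|\eta_i^k(w)-\eta_i^k(w+z')\bigr|\,f\,d\nu^N_{\widehat\rho}.
\]

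\textbf{Step 2: transport across a macroscopic distance.} Fix $z'$ with $\|z'\|\le 2\epsilon N$ and a monotone nearest-neighbour path from $0$ to $z'$ with $\sum_j|z'_j|\le\sqrt d\,\|z'\|$ unit steps. By invariance of $\nu^N_{\widehat\rho}$ under transpositions, for any bond $\{a,b\}$ of $\Lambda_{N,N(K+2)}$ and any density $f$,
\[
\int(\eta_i(a)-\eta_i(b))\,f\,d\nu^N_{\widehat\rho}=\int\eta_i(a)\bigl(f(\xi,\omega)-f(\xi^{a,b},\omega^{a,b})\bigr)\,d\nu^N_{\widehat\rho};
\]
factoring this difference as $\bigl(\sqrt f-\sqrt{f(\xi^{a,b},\omega^{a,b})}\bigr)\bigl(\sqrt f+\sqrt{f(\xi^{a,b},\omega^{a,b})}\bigr)$ and using Cauchy--Schwarz together with $f\ge 0$, $\int f\,d\nu^N_{\widehat\rho}=1$, gives
\[
\Bigl|\int(\eta_i(a)-\eta_i(b))\,f\,d\nu^N_{\widehat\rho}\Bigr|\le 2\sqrt{(\mathcal D^0_{N(K+2)})^{a,b}(f)}\,.
\]
Telescoping $\eta_i^k(w)-\eta_i^k(w+z')$ along the path (it becomes $|B_k|^{-1}$ times a sum, over offsets $u\in B_k(w)$ and over the $\le\sqrt d\,\|z'\|$ steps, of differences $\eta_i(a)-\eta_i(b)$ over the step bonds) --- legitimate because, for $w\in\Lambda_{N,N(K+1)}$ outside an $O(\epsilon N+k)$-wide strip along $\{u_1=\pm N\}$ (which again contributes $O(\epsilon)$) and $N$ large, all sites met stay inside $\Lambda_{N,N(K+2)}$, which is exactly why that enlarged box was introduced --- then summing over $w$, applying the bound above on each step and Cauchy--Schwarz over all triples (site $w$, offset $u$, step), and noting that each bond of $\Lambda_{N,N(K+2)}$ is re-used with multiplicity at most $\sqrt d\,\|z'\|\,|B_k|$, yields
\[
\frac{1}{N^d}\sum_{w\in\Lambda_{N,N(K+1)}}\int\bigl|\eta_i^k(w)-\eta_i^k(w+z')\bigr|\,f\,d\nu^N_{\widehat\rho}\;\le\;\frac{C(K,d)\,\|z'\|}{N}\sqrt{N^{2-d}\,\mathcal D^0_{N(K+2)}(f)}\;\le\;C'(K,d)\,\epsilon,
\]
using $\|z'\|\le 2\epsilon N$ and $\mathcal D^0_{N(K+2)}(f)\le CN^{d-2}$.

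\textbf{Step 3: conclusion and main obstacle.} Combining Steps 1 and 2, the expression in the statement is $\le C'(K,d)\,\epsilon+O(\epsilon+k/(\epsilon N))$ for all large $N$, uniformly over admissible $f$ and $\|h\|\le\epsilon N$; letting $N\to\infty$ then $\epsilon\to0$ gives $0$, and since the bound does not depend on $k$, the outer $\limsup_{k\to\infty}$ is immaterial. The delicate point is the bookkeeping in Step 2: obtaining the sharp multiplicity $\sqrt d\,\|z'\|\,|B_k|$ with which each lattice bond occurs in the triple sum, so that Cauchy--Schwarz turns $\mathcal D^0_{N(K+2)}(f)\le CN^{d-2}$ into an $O(\epsilon)$ bound rather than a quantity diverging in $N$; this, together with keeping the telescoping paths inside $\Lambda_{N,N(K+2)}$, is precisely where the diffusive speed-up $N^2$ in front of $\mathcal L_N$ (equivalently, the $N^{d-2}$ rather than $N^d$ bound of Theorem \ref{dirichlet}) is used. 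Because $\xi$- and $\omega$-particles do not interact under $\mathcal L_N$ and $\widehat\rho$ is constant, no difficulty beyond the classical one-species exclusion case arises, so the argument of \cite{gpv,kl} goes through essentially verbatim.
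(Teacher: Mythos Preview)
The paper does not prove this lemma; it sends the reader to the standard argument of \cite{gpv,kl}. Your reconstruction contains a genuine gap in Step~2. The inequality you correctly derive,
\[
\Bigl|\int(\eta_i(a)-\eta_i(b))\,f\,d\nu^N_{\widehat\rho}\Bigr|\le 2\sqrt{(\mathcal D^0_{N(K+2)})^{a,b}(f)}\,,
\]
controls the absolute value of the integral, not the integral of the absolute value. After telescoping $\eta_i^k(w)-\eta_i^k(w+z')$ and applying the triangle inequality, what remains is a sum of terms $\int|\eta_i(a)-\eta_i(b)|\,f\,d\nu^N_{\widehat\rho}$, and for these the displayed bound is simply false: take $f\equiv 1$, so that $(\mathcal D^0)^{a,b}(f)=0$, while $\int|\eta_i(a)-\eta_i(b)|\,d\nu^N_{\widehat\rho}=2\rho_i(1-\rho_i)>0$. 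Indeed, were your Step~2 valid it would give a $k$-independent $O(\epsilon)$ bound, hence the conclusion already for $k=0$, i.e.\ for single-site occupations; with $f\equiv 1$ the quantity in Step~1 is then a positive constant independent of $N$ and $\epsilon$, so this is impossible.

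The route in \cite{kl} does not telescope occupation variables. One telescopes \emph{square roots of densities} (the moving-particle lemma) to show that, after averaging over translates, the two-block marginal of $f$ on $B_k(0)\cup B_k(z')$ has Dirichlet form --- for exchanges within each block and across a single virtual bond linking the two blocks --- of order at most $C(k)\epsilon^2$ under the constraint $\mathcal D^0_{N(K+2)}(f)\le CN^{d-2}$. A compactness argument then produces a subsequential limit of these marginals with zero Dirichlet form, hence a convex combination of canonical (exchangeable) measures on $B_k(0)\cup B_k(z')$; equivalence of ensembles finally makes $|\eta_i^k(0)-\eta_i^k(z')|$ small in expectation as $k\to\infty$. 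The outer $\limsup_{k\to\infty}$ is therefore genuinely used, contrary to your Step~3.
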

\section{Hydrodynamic limit in large finite volume: 
 Proof of Proposition \ref{bddinf2}}\label{sec:hydro_fini}
In this section, we prove the last result to derive
the hydrodynamic limit in large finite volume (that is of size 
$M_N= N^{1+\frac{1}{d}}$), Proposition \ref{bddinf2}. 
As mentioned in Remark \ref{rem_bdd}, Proposition \ref{bddinf2} is
 the only difference in our proof of hydrodynamics with the case of 
 infinite volume dynamics. 
\subsection{Estimates for finite volume}\label{sec:est_fini}
Next estimates will be useful to prove Proposition \ref{bddinf2}.
\begin{lemma}\label{estimates_bdd}
For a smooth profile 
$\widehat \theta =(\theta_1,\theta_2,\theta_3) : \overline \Lambda \to (0,1)^3$  
satisfying \eqref{eq:cC} and \eqref{b-for-rev},
there exist positive constants $A_0$, $A_0'$ and $A_1$ depending only 
on $\widehat \theta$ such that for any $c >0$, for any cylinder function
$f \in L^2 (\nu_{\widehat \theta,M_N}^N)$,
\begin{align}
&\langle L_{\widehat b,N,M_N} \sqrt f, \sqrt f\rangle 
= - D_{M_N}^{\widehat b} (f), \label{estimates_bdd_1} \\
& \langle \mathcal L_{N,M_N}\sqrt  f,\sqrt f \rangle 
 \le  -A_0 \mathcal D_{M_N}^0 (f) + A_0' |\L_{N,M_N}| N^{-2} 
\|\sqrt f\|_{L^2(\nu_{\widehat \theta,M_N}^N)}^2 , \label{estimates_bdd_2} \\
& \langle \mathbb L_{N,M_N} \sqrt f,\sqrt f\rangle 
 \le 
 A_1 |\L_{N,M_N}| \|\sqrt f\|_{L^2(\nu_{\widehat \theta,M_N}^N)}^2, \label{estimates_bdd_3}
\end{align}
 where $|\L_{N,M_N}|=(2N+1)(2{M_N}+1)^{d-1}$ stands for the cardinality of the set $\L_{N,M_N}$. 
\end{lemma}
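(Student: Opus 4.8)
The three estimates are of the same nature: they evaluate the Dirichlet-type bilinear form $\langle \mathsf G \sqrt f,\sqrt f\rangle$ for each of the three generators appearing in $\mf L_{N,M_N}$, using the reference measure $\nu_{\widehat\theta,M_N}^N$. My plan is to treat them one at a time, each time writing out the action of the generator term by term, symmetrising the integral by a change of variables, and then either recognising the resulting expression exactly (for the boundary part, which is reversible) or controlling the error coming from the spatially varying chemical potential (for the exchange and CPRS parts). The first identity \eqref{estimates_bdd_1} is the cleanest: by Consequences \ref{csq:chg-var}\textit{(ii)} the measure $\nu_{\widehat\theta}^N$ is reversible with respect to $L_{\widehat b,N}$ (this is where \eqref{b-for-rev} is used), so for each $x\in\L_{N,M_N}\cap\Gamma_N$ the operator $L_{\widehat b,N}^x$ is self-adjoint in $L^2(\nu_{\widehat\theta,M_N}^N)$. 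Then the standard identity $\langle L^x_{\widehat b,N}\sqrt f,\sqrt f\rangle = -\tfrac12\int c_x(\cdots)(\sqrt{f(\cdots)}-\sqrt{f(\cdot)})^2\,d\nu$, summed over the three flip types in \eqref{def:LbN-x} and over $x$, gives exactly $-D_{M_N}^{\widehat b}(f)$ as defined in \eqref{def:Dnb}--\eqref{def:Dnbx}; one only needs to check the combinatorial bookkeeping between the rates $c_x(\widehat b(x/N),\xi,\sigma^x\omega)$ and $c_x(\widehat b(x/N),\sigma^x\xi,\omega)$ written in the generator versus the Dirichlet form.

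For \eqref{estimates_bdd_2}, the argument is the same computation already carried out in the proof of Lemma \ref{ent_bound}, Step 2 (Consequences \ref{csq:chg-var}\textit{(i)}), applied to $\sqrt f$ instead of a density. Concretely, for a single bond $(x,x+e_k)$ inside $\L_{N,M_N}$ I would write $\langle \mathcal L^{x,x+e_k}\sqrt f,\sqrt f\rangle$, use \eqref{change_stir} to symmetrise, isolating the genuine Dirichlet form $(\mathcal D^0_{M_N})^{x,x+e_k}(f)$ and a remainder proportional to $R_{i,j}^{x,x+e_k}(\widehat\theta)$; then \eqref{Rij-Taylor}, \eqref{eq:cC} and the elementary inequality \eqref{ineq2} turn that remainder into $C N^{-2}\|\sqrt f\|_{L^2}^2$. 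Summing over the at most $d\,|\L_{N,M_N}|$ bonds gives \eqref{estimates_bdd_2} with suitable $A_0,A_0'$. For \eqref{estimates_bdd_3}, the CPRS generator has uniformly bounded rates (the birth rates $\beta_{\Lambda_{N,M_N}}$ are bounded by $2d\max(\lambda_1,\lambda_2)$, death and $r$-rates are constant), so for each site $x$ and each of the two flip families in \eqref{def:genCP-DRE-x} one writes $\langle \mathbb L^x\sqrt f,\sqrt f\rangle$, applies the appropriate change of variables \eqref{change_jump1}--\eqref{change_jump3}, and bounds each term crudely by $C\|\sqrt f\|_{L^2}^2$ using Cauchy--Schwarz (there is no need to extract a Dirichlet form here, only an upper bound); summing over $x\in\L_{N,M_N}$ yields the factor $|\L_{N,M_N}|$.

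I do not expect a serious obstacle: everything reduces to bookkeeping with the changes of variables \eqref{change_stir}, \eqref{change_jump1}--\eqref{change_jump3} and the inequalities \eqref{ineq1}--\eqref{ineq2}, all already assembled in the excerpt. The one point requiring a little care is \eqref{estimates_bdd_2}: because $\widehat\theta$ is non-constant, the generator $\mathcal L_{N,M_N}$ is \emph{not} self-adjoint in $L^2(\nu_{\widehat\theta,M_N}^N)$, so the error term generated by symmetrisation must be handled exactly as in Consequences \ref{csq:chg-var}\textit{(i)} — in particular one must expand the square $(\sqrt{f(\xi^{x,y},\omega^{x,y})}+\sqrt{f(\xi,\omega)})^2 \le 3 f(\xi^{x,y},\omega^{x,y}) + \tfrac32 f(\xi,\omega)$ and use the change of variables once more to express $\int \eta_j(x)\eta_i(x+e_k) f(\xi^{x,y},\omega^{x,y})\,d\nu$ back in terms of $\|\sqrt f\|_{L^2}^2$, which is where the $N^{-2}$ factor and the volume factor $|\L_{N,M_N}|$ combine correctly. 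The fact that $f$ here is only assumed to be in $L^2(\nu_{\widehat\theta,M_N}^N)$ rather than a probability density is irrelevant, since all bounds are homogeneous of degree two in $\sqrt f$.
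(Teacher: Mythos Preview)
Your proposal is correct and follows essentially the same route as the paper. The only organisational difference worth noting is in \eqref{estimates_bdd_2}: the paper first applies the elementary identity $A(B-A)=-\tfrac12(B-A)^2+\tfrac12(B^2-A^2)$ to write $\langle\mathcal L_{N,M_N}\sqrt f,\sqrt f\rangle=-\tfrac12\mathcal D_{M_N}^0(f)+\tfrac12\int\mathcal L_{N,M_N}f\,d\nu_{\widehat\theta,M_N}^N$, and only then invokes Consequences~\ref{csq:chg-var}\textit{(i)} on the second term; your direct symmetrisation via \eqref{change_stir} amounts to the same computation with the steps interleaved.
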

\begin{proof}
The reversibility of $\nu_{\widehat \theta,M_N}^N$ with respect to the generator 
$L_{\widehat b,N,M_N}$ yields \eqref{estimates_bdd_1}.\par 
To prove \eqref{estimates_bdd_2}, 
observe that for all $A,B>0$, $\displaystyle A(B-A) = -\frac12(B-A)^2 +\frac12 (B^2-A^2)$, so that
\begin{align}\nonumber
& \langle \mathcal L_{N,M_N} \sqrt f, \sqrt f\rangle
 = \sum\limits_{k=1}^d\sum\limits_{x,x+e_k \in \Lambda_{N,M_N}} 
 \int \sqrt { f(\xi,\omega) } \Big(\sqrt {f(\xi^{x,x+e_k},\omega^{x,x+e_k})} 
 - \sqrt{ f(\xi,\omega) }\Big) d\nu_{\widehat \theta,M_N}^N (\xi,\omega)\\
&  = -\dfrac{1}{2}\mathcal D_{M_N}^0 (f) 
+ \dfrac{1}{2} \sum\limits_{k=1}^d\sum\limits_{x,x+e_k \in \Lambda_{N,M_N}}
\int  \mathcal L^{x,x+e_k} f(\xi,\omega) d\nu_{\widehat \theta,M_N}^N (\xi,\omega)\, .\label{forbd2-1}
\end{align}
By  Consequences \ref{csq:chg-var}\textit{(i)}, we have
\begin{equation}\label{forbd2-2}
\dfrac{1}{2} 
\int  \mathcal L_{N,M_N} f(\xi,\omega) d\nu_{\widehat \theta,M_N}^N (\xi,\omega)
\le \dfrac{1}{4}\mathcal D_{M_N}^0 (f) +
   C |\L_{N,M_N}| N^{-2} \| \sqrt f\|_{L^2(\nu_{\widehat \theta,M_N}^N)}^2 
\end{equation}
for some positive constant $C$.  Putting together \eqref{forbd2-1} and \eqref{forbd2-2}
we obtain \eqref{estimates_bdd_2}. \par 
To prove  \eqref{estimates_bdd_3}, we have
\begin{align*}
& \langle \mathbb L_{N,M_N} \sqrt f, \sqrt f\rangle = I_1 + I_2\\
& \quad := \sum\limits_{x \in \Lambda_{N,M_N}} 
\int \Big(\beta_{\Lambda_{N,M_N}}(x,\xi,\omega) (1-\xi(x))+\xi(x)\Big) \sqrt {f(\xi,\omega)} 
\Big( \sqrt {f(\sigma^x \xi,\omega)} - \sqrt {f(\xi,\omega)} \Big) d\nu_{\widehat \theta,M_N}^N(\xi,\omega)\\
& \qquad + \sum\limits_{x \in \Lambda_{N,M_N}} \int \Big(r (1-\omega(x))+\omega(x)\Big) 
\sqrt {f(\xi,\omega)} \Big(\sqrt {f(\xi,\sigma^x \omega)} - \sqrt {f(\xi,\omega)} \Big) 
d\nu_{\widehat \theta,M_N}^N(\xi,\omega)\, .\\
\end{align*}
For $I_1$, using first that all the rates are bounded,
then \eqref{ineq2} with $A=2N$, we have
\begin{align*}
&  I_1 \leq C(\lambda_1,\lambda_2,r) \sum\limits_{x \in \Lambda_{N,M_N}} 
\int \Big(\sqrt {f(\xi,\omega)} \sqrt {f(\sigma^x \xi,\omega)}
 +  {f(\xi,\omega)} \Big) d\nu_{\widehat \theta,M_N}^N(\xi,\omega)\\
& \quad \leq C(\lambda_1,\lambda_2,r) \sum\limits_{x \in \Lambda_{N,M_N}} 
 \int\Big(  f(\xi,\omega) + \dfrac{1}{4}  f(\sigma^x \xi,\omega) 
 +  f(\xi,\omega) \Big) d\nu_{\widehat \theta,M_N}^N(\xi,\omega)
\end{align*}
for some  constant $C(\lambda_1,\lambda_2,r)$. 
We conclude using the change of variables \eqref{change_jump2} and \eqref{eq:cC}.
We proceed similarly for $I_2$, using the change of variables \eqref{change_jump3}.
\end{proof}

\medskip
\subsection{Boundary conditions in large finite volume} 
Recall that $\mu_N$ stands for the initial measure and 
$\mu_{N,M_N}$ for the marginal of $\mu_N$ on $\widehat \Sigma_{N,M_N}$ (cf. \eqref{eq:marginal}).
Recall from Section \ref{sec:hydrodynamics} that we denoted by 
${\widetilde {\mathbb P}}_{\mu_N}^{M_N,\widehat b}$ the law of the 
finite volume process $(\zeta_t,\chi_t)_{t\in [0,T]}$, by
${\widetilde {\mathbb E}}_{\mu_N}^{M_N,\widehat b}$ the corresponding  expectation and by
$(\widetilde \eta_1,\widetilde \eta_2,\widetilde \eta_3)$ then
$\widetilde \eta_0$ the associated conserved quantities for the exchange dynamics as in 
\eqref{omega}--\eqref{omega-compl}.
\begin{proof}[Proof of Proposition \ref{bddinf2}]
Let $\widehat \theta =(\theta_1,\theta_2,\theta_3):\L\to (0,1)^3$ be a smooth profile
satisfying \eqref{eq:cC} and \eqref{b-for-rev}.
Denote by $\nu_{\widehat \theta,\L_N\setminus \L_{N,M_N}}^N$ the marginal of $\nu^N_{\widehat \theta}$ on 
$\big(\{0,1\}\times \{0,1\}\big)^{\L_N\setminus \L_{N,M_N}}$, 
and by $\widetilde \mu^{N,M_N}_{\widehat \theta}$ the measure on  $\widehat \Sigma_{N}$
given by 
\begin{equation}\label{def:mu-comp}
\widetilde \mu^{N,M_N}_{\widehat \theta}
=\mu_{N,M_N}\otimes \nu_{\widehat \theta,\L_N\setminus \L_{N,M_N}}^N.\end{equation}
Let $H : [0,T] \times \Gamma\to \R$ be a bounded function with compact support contained 
in $\{-1,1\}\times[-K,K]^{d-1}\subset\Gamma$ for some $K>0$; let $\delta >0$, and $i\in\{1,2,3\}$. 
To shorten the notation, we denote 
\begin{eqnarray*}\label{block-W}
B_i^{H_t} (\zeta_t,\chi_t) 
&=& \dfrac{1}{N^{d-1}} \sum\limits_{x \in \L_{N,M_N}\cap\Gamma_N} H_t(x/N)  
\Big( {\widetilde \eta}_{i,t}(x) - b_i(x/N) \Big)\\
 &=& \dfrac{1}{N^{d-1}} \sum\limits_{x \in \L_{N,NK}\cap\Gamma_N} H_t(x/N)  
\Big( {\widetilde \eta}_{i,t}(x) - b_i(x/N)\Big)\, . 
\end{eqnarray*}
Because  $H$ has compact support  
in $\Gamma$, we have (cf. \eqref{def:mu-comp}),
\begin{equation*}
{\widetilde {\mathbb P}}_{\mu_N}^{M_N,\widehat b} \Big( \Big| \int_0^T 
B_i^{H_t} (\zeta_t,\chi_t)
 dt \Big|>\delta \Big) 
 =  {\widetilde {\mathbb P}}_{\widetilde \mu^{N,M_N}_{\widehat \theta}}^{M_N,\widehat b}\Big( \Big| \int_0^T 
B_i^{H_t} (\zeta_t,\chi_t)
 dt \Big|>\delta \Big)\, .
\end{equation*}
To prove the proposition it is enough to show that
\begin{equation*}
\limsup\limits_{N \rightarrow \infty} 
\frac{1}{|\L_{N,M_N}|}\log 
{\widetilde {\mathbb P}}_{\widetilde \mu^{N,M_N}_{\widehat \theta}}^{M_N,\widehat b}
\Big( \Big| \int_0^T B_i^{H_t} (\zeta_t,\chi_t)
 dt \Big| > \delta \Big) = -\infty\, .
\end{equation*}
Since the Radon-Nikodym derivative $\displaystyle{\frac{d\mu_{N,M_N}}{d\nu_{\widehat \theta,M_N}^N}}$ 
is bounded by $\exp(N M_N^{d-1}K_1)$ for some positive constant $K_1$, by \eqref{def:mu-comp}
it is enough to show that
\begin{equation*}
\limsup\limits_{N \rightarrow \infty} 
\frac{1}{|\L_{N,M_N}|}\log 
{\widetilde {\mathbb P}}_{\nu_{\widehat \theta}^N}^{M_N,\widehat b}
\Big( \Big| \int_0^T B_i^{H_t} (\zeta_t,\chi_t)
 dt \Big| > \delta \Big) = -\infty\, .
\end{equation*}
By exponential Chebyshev's inequality, the expression in the last limit is bounded above by
$$
-a\delta + \frac{1}{|\L_{N,M_N}|}\log 
{\widetilde {\mathbb E}}_{\nu_{\widehat \theta}^N}^{M_N,\widehat b}
\exp \Big(a|\L_{N,M_N}|\Big| \int_0^T B_i^{H_t} (\zeta_t,\chi_t)
 dt \Big|\Big)
$$
for any $a>0$. Using that $e^{|\alpha|} \leq e^\alpha + e^{-\alpha}$ and 
 that, for two generic sequences $(a_L)_{L\in\N},(b_L)_{L\in\N}$ we have 
$$\limsup\limits_L L^{-1} \log \big(a_L + b_L \big) 
\leq \max \Big( \limsup\limits_L L^{-1} \log a_L , \limsup\limits_L L^{-1} b_L \Big),$$
one can pull off the absolute value even if it means replacing $H$ by $-H$.
 Therefore, to prove the proposition, we have to show that,
for any bounded function $H$, there exists a positive constant $C>0$, such that for any $a>0$,
$$
\limsup_{N\to +\infty}\frac{1}{|\L_{N,M_N}|}\log 
{\widetilde {\mathbb E}}_{\nu_{\widehat \theta}^N}^{M_N,\widehat b}
\exp \Big(a|\L_{N,M_N}| \int_0^T B_i^{H_t} (\zeta_t,\chi_t)
 dt \Big)\le C
$$
and then to let $a\uparrow +\infty$.

By Feynman-Kac formula (cf. Appendix 1, Section 7 in \cite{kl}),
\begin{equation}
\begin{aligned}
& 
\frac{1}{|\L_{N,M_N}|}\log 
{\widetilde {\mathbb E}}_{\nu_{\widehat \theta}^N}^{M_N,\widehat b}\exp \Big(a|\L_{N,M_N}| 
\int_0^T B_i^{H_t} (\zeta_t,\chi_t) dt \Big)
 \\ \label{block-W_eq1}
& \ \leq \int_0^T \sup\limits_{f} 
\Bigg\{ \int a B_i^{H_t} (\zeta,\chi) f(\zeta,\chi) d\nu_{\widehat \theta,M_N}^N (\zeta, \chi) 
+ |\L_{N;M_N}|^{-1} \langle \mathfrak L_{N,M_N} \sqrt f, \sqrt f \rangle \Bigg\} dt \, ,
\end{aligned}
\end{equation}
where the supremum is carried over all densities $f$ with respect to $\nu_{\widehat \theta,M_N}^N$. 
By Lemma \ref{estimates_bdd}, 
\begin{equation}\label{by-5.1}
\begin{aligned}
\langle \mathfrak L_{N,M_N} \sqrt f, \sqrt f \rangle &
\leq - N^2 D_{M_N} ( f) + A_0 |\L_{N,M_N}| \leq - N^2 D_{M_N}^{\widehat b} ( f) + A_0 |\L_{N,M_N}|
\end{aligned}
\end{equation}
for some positive constant $A_0$, where $D_{M_N}$ and $D_{M_N}^{\widehat b}$ are defined  in
\eqref{def:Dn}--\eqref{def:dir-form-reaction}. \\

To evaluate $\displaystyle \int a B_i^{H_t} (\zeta,\chi) f(\zeta,\chi) d\nu_{\widehat \theta,M_N}^N (\zeta, \chi) $, observe first that for all  $x\in \L_{N,M_N}$, $1\le i\le 3$, 
\begin{align}\label{replacement_eq2}
  \widetilde\eta_i(x)-b_i(x/N) 
& =  \widetilde\eta_i(x)(1-b_i(x/N)) -b_i(x/N)(1-\widetilde\eta_i(x))  \nonumber \\
&  = \sum\limits_{0\le j \neq i\le 3} \Big( \widetilde\eta_i(x)  b_j(x/N) - 
b_i(x/N)  \widetilde\eta_j(x/N) \Big) .
\end{align} 
We detail for instance the case $i=1$, the others follow the same way.
\begin{align}
& \int a B_1^{H_t} (\zeta,\chi) f(\zeta,\chi) d\nu_{\widehat \theta,M_N}^N (\zeta, \chi) 
=\frac{a}{N^{d-1}}\sum_{x\in \L_{N,NK}\cap\Gamma_N}
\int H_{t}(x/N) \Bigg( \Big( \widetilde\eta_1(x) b_0(x/N)  - b_1(x/N) \widetilde\eta_0(x)\Big)\nonumber \\ \nonumber
& \qquad  + \Big( \widetilde\eta_1(x) b_3(x/N)  - b_1(x/N) \widetilde\eta_3(x)\Big)
+ \Big( \widetilde\eta_1(x) b_2(x/N)  - b_1(x/N) \widetilde\eta_2(x)\Big) \Bigg)
f(\zeta,\chi) d  \nu_{\widehat \theta,M_N}^N (\zeta,\chi) \\ \nonumber
& = \frac{a}{N^{d-1}}\sum_{x\in \L_{N,NK}\cap\Gamma_N} 
\int H_{t}(x/N) \Bigg( b_1 (x/N)\widetilde\eta_0(x)\Big( f(\sigma^x \zeta,\chi) - f(\zeta,\chi) \Big)\\ \nonumber
& \qquad  
 + b_1(x/N) \widetilde\eta_3(x)\Big(f(\zeta,\sigma^x \chi) - f(\zeta,\chi) \Big) 
+b_1 (x/N)\widetilde\eta_2(x)\Big(f(\sigma^x\zeta,\sigma^x \chi) - f(\zeta,\chi) \Big) \Bigg)
 d  \nu_{\widehat \theta,M_N}^N (\zeta,\chi)\\ \nonumber
&  \leq \frac{N^2}{2|\L_{N,M_N}|}\sum_{x\in \L_{N,NK}\cap\Gamma_N}\int  b_1 (x/N)
\Bigg(\widetilde\eta_0(x) \Big( \sqrt{f(\sigma^x \zeta,\chi)} - \sqrt{f(\zeta,\chi) }\Big)^2 \\\nonumber
&\qquad\quad + \widetilde\eta_3(x)\Big(\sqrt{f(\zeta,\sigma^x \chi)} - \sqrt{f(\zeta,\chi)} \Big)^2  
+ \widetilde\eta_2(x)\Big(\sqrt{f(\sigma^x\zeta,\sigma^x \chi)} - \sqrt{f(\zeta,\chi)} \Big)^2 \Bigg)
d\nu_{\widehat \theta,M_N}^N (\zeta,\chi) \\ \label{up-for-B1}
& \qquad + \dfrac{C'_1 a^2}{2}\frac{|\L_{N,M_N}|}{N^{d+1}} || H^2 ||_\infty\, ,
\end{align}
where $C_1'$ is some positive constant. 
We used \eqref{replacement_eq2},
 changes of variables given by \eqref{change_jump1}--\eqref{change_jump3} 
 with \eqref{b-for-rev} for the  two equalities. 
Next, we used \eqref{ineq2} with $A=|\L_{N,M_N}|aN^{-d}$,
and that $f$ is a probability density, 
with a computation similar to the one done for Consequences \ref{csq:chg-var}\textit{(i)}.\\ 
Note that the last sum above is a part of the Dirichlet
form for the boundaries $D_{M_N}^{\widehat b} ( f)$ (see \eqref{def:Dnb}, \eqref{def:Dnbx}).\\ 

Collecting all previous estimates, that is \eqref{block-W_eq1},
\eqref{by-5.1}, \eqref{up-for-B1} and the similar ones
for $B_i,i\in\{0,2,3\}$, 
we proved that
\begin{equation}\label{eq:last-3.3}
\begin{aligned}
& \frac{1}{|\L_{N,M_N}|}\log 
{\widetilde {\mathbb E}}_{\mu_N}^{M_N,\widehat b}\exp \Big(a|\L_{N,M_N}| \Big| \int_0^T B_i^{H_t} (\zeta_t,\chi_t)
 dt \Big|\Big)
\leq  T\frac{C'_1 a^2}{2}\frac{|\L_{N,M_N}|}{N^{d+1}} || H^2 ||_\infty + TA_0\, .
\end{aligned}
\end{equation}
Because $M_N=N^{1+\frac1d}$, the r.h.s. of \eqref{eq:last-3.3} goes to $TA_0$
when 
$N\uparrow +\infty$, which concludes the proof.
\end{proof}

\section{Hydrodynamic limit in infinite volume}\label{sec:hydro_infini}
 \subsection{The coupled process}\label{subsec:coupl-proc}
We couple $(\zeta_t,\chi_t)_{t\in [0,T]}$ to our original dynamics in infinite volume
$(\xi_t,\omega_t)_{t\in [0,T]}$.  We denote with a ``bar'' everything related to this
coupling. As explained in the introduction,
our model allows the use of basic coupling for each of the involved dynamics; namely, 
the coupled particles try to behave similarly as much as possible. 
The generator ${\overline {\mf L}}_{N}$ of the coupled process  is given by 
$$ {\overline {\mf L}}_{N}= N^2{\overline {\mc L}}_{N} + {\overline {\mathbb L}}_{N} 
+  N^2{\overline { L}}_{\widehat b, N}\, . $$
For this coupling, we shall come back to the initial equivalent formulation of configurations:
$\eta\in\{0,1,2,3\}^{\Lambda_N}$ corresponds to $(\xi,\omega)$, and  
$\widetilde\eta\in\{0,1,2,3\}^{\Lambda_N}$ to $(\zeta,\chi)$  via the application \eqref{eq:omxieta},
that is $\eta=\eta((\xi,\omega)),\widetilde\eta=\widetilde\eta((\zeta,\chi))$. 
For $x \in \Lambda_N$, $k\in\{1,...,d\}$, define $\eta^{x,x+ e_k}$ to be
the configuration obtained from $\eta$ by exchanging the values of $\eta$
at $x$ and $x+ e_k$. Notice that via \eqref{eq:2.0},  
$\eta^{x,x+e_k}$ is equivalent to $(\xi^{x,x+e_k},\omega^{x,x+e_k})$. \\ \\
Define the coupled generator for the exchange part by 
\begin{equation}\label{def:Lcoupl-exch}
{\overline {\mathcal L}}_{N}={\overline {\mathcal L}}_{N}^1
+{\overline {\mathcal L}}_{N}^2\, ,
\end{equation} 
where, for a cylinder function $f$ on $(\{0,1,2,3\}^{\Lambda_N})^2$, 
and $(\xi,\omega)\in{\widehat \Sigma}_N,(\zeta,\chi)\in{\widehat \Sigma}_N$, 
abbreviating  $\widetilde \eta$ for $\widetilde \eta((\zeta,\chi))$, 
and $\eta$ for $\eta((\xi,\omega))$ (which will be done in all this section), we have 
\begin{eqnarray}
{\overline {\mathcal L}}_{N}^1 f(\widetilde \eta,\eta) &
=&  \sum_{k=1}^d\sum_{x,x+ e_k \in \Lambda_{N,M_N}} 
{\overline {\mathcal L}}^{1,(x,x+ e_k)}f(\widetilde \eta,\eta)
\qquad\mbox{with}\nonumber\\
{\overline {\mathcal L}}^{1,(x,x+ e_k)} f(\widetilde \eta,\eta)&=& 
 f(\widetilde \eta^{x,x+e_k},\eta^{x,x+ e_k})
- f(\widetilde \eta,\eta) \, ,\label{Lcoupl-exch3}\\
{\overline {\mathcal L}}_{N}^2 f(\widetilde \eta,\eta)   & =&
\sum_{k=2}^d \sum_{x\in {\mathcal A}(k)}
{\overline {\mathcal L}}^{2,(x,x+ e_k)}f(\widetilde \eta,\eta)
\qquad\mbox{with}\nonumber\\
{\overline {\mathcal L}}^{2,(x,x+ e_k)} f(\widetilde \eta,\eta)&=&
 f(\widetilde \eta,\eta^{x,x+ e_k}) - f(\widetilde \eta, \eta)\, ,\label{Lcoupl-exch4}
\end{eqnarray}
where
${\mathcal A}(k)=\{x\in \Lambda_N: (x,x+e_k) \notin \Lambda_{N,M_N}\times\Lambda_{N,M_N}\}$.
This means that
either $(x,x+e_k) \in (\Lambda_{N,M_N}\times\Lambda_{N,M_N}^c)\cup(\Lambda_{N,M_N}^c\times\Lambda_{N,M_N})$, or $(x \notin \Lambda_{N,M_N}$ and $x+e_k\notin\Lambda_{N,M_N})$  
(recall that $\Lambda_{N,M_N}^c$ was defined in \eqref{def:LNn-ext}). \\ \\
For $1\le j\le 3$ and $x\in \L_N$, denote by $\eta_x^j$ 
the configuration obtained from $\eta$ by flipping the state of $x$ to $j$.
A (basic) coupling for the reaction part is given by  
${\overline{\mathbb L}_N}$ (see \cite[Proposition 4.2]{k}  or \cite{phd}
for details and justifications for this coupling): 
\begin{equation}\label{eq:gen-coupl-cprs}
 {\overline {\mathbb L}_N}= 
{\overline {\mathbb L}}^{1,a}_N+{\overline {\mathbb L}}^{1,b}_N + {\overline {\mathbb L}}^2_N ,
\end{equation}
where ${\overline {\mathbb L}}^{1,a}_N$ stands for the generator with coupled flips, 
${\overline {\mathbb L}}^{1,b}_N $ for uncoupled flips within $\Lambda_{N,M_N}$ and 
${\overline {\mathbb L}}^2_N$ for uncoupled flips occurring 
at the boundary of $\Lambda_{N,M_N}$ or outside $\Lambda_{N,M_N}$, that is,
\begin{small}
\begin{align}\nonumber
& {\overline {\mathbb L}}^{1,a}_N f(\widetilde \eta ,\eta) = 
\sum\limits_{x \in \Lambda_{N,M_N}} 
{\overline {\mathbb L}}^{1,a,x}_N f(\widetilde \eta ,\eta)\quad\mbox{with}\\\nonumber
&{\overline {\mathbb L}}^{1,a,x}_N f(\widetilde \eta ,\eta)=
\Big\{ \big( \widetilde \eta_0(x) \eta_0(x)  \overline \beta_{M_N}(x,\widetilde \eta, \eta)
  + \eta_3(x) \widetilde \eta_3(x) \big) \Big\}
\big[ f(\widetilde \eta_x^1, \eta_x^1) - f(\widetilde \eta,\eta) \big]\\\nonumber
&  + \Big\{  \widetilde \eta_1(x) \eta_1(x)  +\widetilde \eta_2(x)  \eta_2(x) \Big\} 
\big[ f(\widetilde \eta_x^0, \eta_x^0) - f(\widetilde \eta,\eta) \big]
  + \Big\{  r \widetilde \eta_0(x) \eta_0(x)  + \widetilde \eta_3(x) \eta_3(x) \Big\}
\big[ f(\widetilde \eta_x^2, \eta_x^2) - f(\widetilde \eta,\eta) \big]\\\nonumber
&  + \Big\{\widetilde \eta_2(x)  \eta_2(x) \overline\beta_{M_N}(x,\widetilde\eta, \eta)
  + r\widetilde \eta_1(x)  \eta_1(x) \Big\}
\big[f(\widetilde \eta_x^3, \eta_x^3) - f(\widetilde \eta,\eta) \big]\\\nonumber
 & + \Big\{ \widetilde \eta_2 (x) \eta_0(x) 
\overline\beta_{M_N}(x,\widetilde \eta,\eta)  \Big\}
\big[f(\widetilde \eta_x^3, \eta_x^1) - f(\widetilde \eta, \eta)\big]
+ \Big\{\widetilde \eta_0 (x) \eta_2(x)\overline\beta_{M_N}(x,\widetilde \eta,\eta)  \Big\}
\big[f(\widetilde \eta_x^1, \eta_x^3) -f(\widetilde \eta, \eta)\big]\\\nonumber
&  + \Big\{ \widetilde \eta_2 (x) \eta_3(x) \Big\}
\big[f(\widetilde \eta_x^0, \eta_x^1) -f(\widetilde \eta, \eta)\big]  
+ \Big\{ \widetilde \eta_3 (x) \eta_2(x)  \Big\} 
\big[f(\widetilde \eta_x^1, \eta_x^0) -f(\widetilde \eta, \eta)\big]\\\nonumber
&  + \Big\{ \widetilde \eta_3 (x) \eta_1(x)  \Big\}
\big[f(\widetilde \eta_x^2, \eta_x^0) - f(\widetilde \eta, \eta)\big]
+ \Big\{ \widetilde \eta_1 (x) \eta_3(x)  \Big\} 
\big[f(\widetilde \eta_x^0, \eta_x^2)-f(\widetilde \eta, \eta)\big] \\\label{eq:gen-coupl-cprs-1a}
&  + \Big\{ \widetilde \eta_1 (x) \eta_0(x)  r \Big\} 
\big[f(\widetilde \eta_x^3, \eta_x^2) -f(\widetilde \eta, \eta)\big] 
+ \Big\{ \widetilde \eta_0 (x) \eta_1(x) r \Big\}
\big[f(\widetilde \eta_x^2, \eta_x^3) - f(\widetilde \eta, \eta)\big] \, .
\end{align}
\end{small}
Let $\beta_{M_N} (\cdot,\cdot)$ be the growth rate 
on $\Lambda_{N,M_N}$ defined via   \eqref{eq:omxieta}, \eqref{omega} and \eqref{def:ratesCP-DRE} by 
$$\beta_{M_N} (x,\eta) =  \beta_{M_N} (x,\eta((\xi,\omega))) =\beta_{\Lambda_{N,M_N}} (x,\xi,\omega) = 
\sum\limits_{y \in \Lambda_{N,M_N} \atop \|y-x\|=1} 
\Big\{ \lambda_1  \eta_1(y)+ \lambda_2 \eta_3(y) \Big\},$$
 and 
\begin{eqnarray*}
\overline\beta_{M_N}(x,\widetilde \eta,\eta) 
&=&  \overline\beta_{M_N}(x,\widetilde \eta((\zeta,\chi)),\eta((\xi,\omega))) 
=\beta_{M_N}(x,\widetilde \eta)\wedge\beta_{M_N}(x,\eta),\\
\overline \beta_{M_N}^{(1)}(x,\widetilde \eta,\eta)
&=& \beta_{M_N}(x,\widetilde \eta) - \overline\beta_{M_N}(x,\widetilde\eta, \eta), 
\quad \overline \beta_{M_N}^{(2)}(x,\widetilde \eta,\eta)
 = \beta_{M_N}(x, \eta) - \overline\beta_{M_N}(x,\widetilde\eta, \eta) .
 \end{eqnarray*}
 Then
\begin{small}
\begin{align}\nonumber
& {\overline {\mathbb L}}^{1,b}_N f(\widetilde \eta ,\eta) = 
\sum\limits_{x \in \Lambda_{N,M_N}} 
{\overline {\mathbb L}}^{1,b,x}_N f(\widetilde \eta ,\eta)\quad\mbox{with}\\\nonumber
&{\overline {\mathbb L}}^{1,b,x}_N f(\widetilde \eta ,\eta)= 
  \Big\{ \widetilde \eta_0(x) \Big( 
  \eta_0(x) \overline\beta_{M_N}^{(1)}(x,\widetilde \eta,\eta)+ \big(\eta_1(x) +\eta_3(x)
   + \eta_2(x)\big) \beta_{M_N} (x,\widetilde \eta)\Big)  \\\nonumber
&  \qquad\qquad\quad + \widetilde \eta_3 (x) \Big(\eta_0(x)+\eta_1(x) \Big) \Big\}
\big[f(\widetilde \eta_x^1, \eta) - f(\widetilde \eta,\eta) \big]\\\nonumber
& +  \Big\{ \eta_0(x)\Big(\big(\widetilde \eta_0(x) + \widetilde \eta_2 (x)  \big) 
\overline\beta_{M_N}^{(2)}(x,\widetilde \eta,\eta) 
+ \big( \widetilde \eta_1 (x) +\widetilde \eta_3 (x)\big) \beta_{M_N} (x,\eta)\Big)
 + \widetilde \eta_0(x)\eta_3(x)\Big\}\\\nonumber
& \quad\times 
\big[f(\widetilde \eta, \eta_x^1) - f(\widetilde \eta,\eta)\big]\\\nonumber
&  + \Big\{ \widetilde \eta_2(x)  \Big( \big(\eta_2(x)+ \eta_0(x) \big) 
\overline\beta_{M_N}^{(1)}(x,\widetilde \eta,\eta) + \big(\eta_1(x)+ \eta_3(x)\big)
 \beta_{M_N}(x,\widetilde \eta)  \Big)+ \widetilde \eta_1 (x) \Big( \eta_2(x)
 +  \eta_3(x) \Big) r  \Big\}\\\nonumber
& \quad\times 
\big[f(\widetilde \eta_x^3, \eta) - f(\widetilde \eta,\eta) \big]\\\nonumber
&  + \Big\{ \eta_2(x)\Big( \big(\widetilde \eta_0 (x) + \widetilde \eta_2(x) \big)
\overline\beta_{M_N}^{(2)}(x,\widetilde \eta,\eta) + 
\big( \widetilde \eta_1 (x) +  \widetilde \eta_3 (x) \big)
\beta_{M_N}(x,\eta) \Big)
+ \Big(\widetilde \eta_2(x)+ \widetilde \eta_3 (x)\Big) \eta_1(x) r\Big\}\\\nonumber
& \quad\times 
\big[f(\widetilde \eta, \eta_x^3) - f(\widetilde \eta,\eta) \big]\\\nonumber
& + \Big\{ \widetilde \eta_2 (x) \Big( \eta_0(x)  +\eta_1(x)\Big) 
 + \widetilde \eta_1 (x)\Big(\eta_0 (x)+ \eta_2(x) \Big) \Big\}
\big[f(\widetilde \eta_x^0, \eta) - f(\widetilde \eta, \eta)\big]\\\nonumber
&  + \Big\{ \Big( \widetilde \eta_2 (x) + \widetilde \eta_3 (x) \Big) \eta_0(x) r
+  \Big( \widetilde \eta_0 (x) + \widetilde \eta_2 (x) \Big) \eta_3(x)  \Big\}
\big[f(\widetilde \eta, \eta_x^2) - f(\widetilde \eta, \eta)\big]\\\nonumber
&  + \Big\{ \Big( \widetilde \eta_0 (x) + \widetilde \eta_2 (x) \Big) \eta_1(x)
+ \Big( \widetilde \eta_0 (x) + \widetilde \eta_1 (x) \Big) \eta_2(x)  \Big\}
 \big[f(\widetilde \eta, \eta_x^0) - f(\widetilde \eta, \eta)\big]\\\label{eq:gen-coupl-cprs-1b}
&  + \Big\{ \big(\widetilde \eta_0 (x) \Big( \eta_3(x) + \eta_2(x)\Big) r  + \widetilde \eta_3 (x)  \Big(\eta_0(x) +  \eta_2(x) \Big)  \Big\}
\big[f(\widetilde \eta_x^2, \eta) - f(\widetilde \eta, \eta)\big] \, .
\end{align}
\end{small}
 Finally
\begin{eqnarray}\label{eq:gen-coupl-cprs-2}
&&\ \overline {\mathbb L}^2_N f(\widetilde \eta,\eta) 
=  \sum\limits_{x \in \Lambda_{N,M_N}}
\overline {\mathbb L}^{2,a,x}_N f(\widetilde \eta,\eta) 
 + \sum\limits_{x \notin \Lambda_{N,M_N}} \overline {\mathbb L}^{2,b,x}_N f(\widetilde \eta,\eta),
\quad\mbox{with}\\\nonumber
&&\overline {\mathbb L}^{2,a,x}_N f(\widetilde \eta,\eta)=
 \eta_0(x) \beta_{N,M_N}^{out}(x,\eta) 
\big[ f(\widetilde \eta, \eta_x^1) - f(\widetilde \eta,\eta) \big] 
 + \eta_0(x) \beta_{N,M_N}^{out}(x,\eta) 
 \big[f(\widetilde \eta, \eta_x^3) - f(\widetilde \eta,\eta) \big] \, ,
\end{eqnarray}
where, for $x \in \Lambda_{N,M_N}$, 
$\beta_{N,M_N}^{out}(x,\eta) = 
\sum\limits_{y\in \Lambda_{N,M_N}^c\atop \|y-x\|=1} 
\Big\{ \lambda_1 \eta_1(y)+ \lambda_2  \eta_3(y) \Big\}$;  and, 
for $x \notin \Lambda_{N,M_N}$, the transitions produce changes 
only on the second configuration $\eta$, 
in a similar way as
in ${\overline {\mathbb L}}^{1,a}_N+{\overline {\mathbb L}}^{1,b}_N$, 
but with $\beta_{M_N}$ replaced by $\beta_{\Lambda_N}$. 
Since we shall not use this second part of the
generator $\overline {\mathbb L}^2_N$ in our computations, we do not detail it.\par
\smallskip
Note that the generator $L_{\widehat b,N}$ (see \eqref{def:LbN}) can be rewritten as,
 for a cylinder function $g$ on $\{0,1,2,3\}^{\Lambda_N}$,  and $\eta=\eta((\xi,\omega))$, 
\begin{eqnarray}\label{Lb-alternative}
L_{\widehat b,N} g(\eta) &=& \sum\limits_{x \in \Gamma_N} \sum\limits_{j=0}^3 
b_j(x/N) \big(1-\eta_j(x)\big) \big( g (\eta_x^j) - g(\eta)\big)\nonumber\\
&=&  \sum\limits_{x \in \Gamma_N} \sum\limits_{j=0}^3 
b_j(x/N)  \big( g (\eta_x^j) - g(\eta)\big).\label{Lb-alternative}
\end{eqnarray}
We construct now a coupled dynamics in which, on each site $x$,
the state in both $\eta$ and $\widetilde \eta$ changes to the same state $j$ at rate $b_j(x/N)$
when possible, that is within $\Lambda_{N,M_N}$:
\begin{eqnarray}\label{Lb-coupl-alternative}
&&{\overline { L}}_{\widehat b, N}
={\overline { L}}_{\widehat b, N}^1+{\overline { L}}_{\widehat b, N}^2\qquad\mbox{where}\\
 &&{\overline { L}}_{\widehat b, N}^1 
=
\sum\limits_{x \in \Gamma_N\cap\Lambda_{N,M_N}} 
{\overline { L}}_{\widehat b, N}^{1,x} 
\qquad\mbox{with}\qquad
{\overline { L}}_{\widehat b, N}^{1,x} f(\widetilde \eta ,\eta)
=\sum\limits_{i=0}^3 b_i(x/N) 
\big(f(\widetilde \eta_x^i, \eta_x^i) 
- f(\widetilde \eta,\eta)\big) \label{Lb-coupl-alternative_1}  \\
&&{\overline { L}}_{\widehat b, N}^2 
=\sum\limits_{x: x \in \Gamma_N,\atop x \notin\Lambda_{N,M_N}} 
{\overline { L}}_{\widehat b, N}^{2,x} 
\qquad\mbox{with}\qquad
{\overline { L}}_{\widehat b, N}^{2,x} f(\widetilde \eta ,\eta)=
\sum\limits_{i=0}^3 b_i(x/N)
\big(f(\widetilde \eta, \eta_x^i) - f(\widetilde \eta,\eta)\big) \label{Lb-coupl-alternative_2} .
\end{eqnarray}
We now have all the material to investigate the specific entropy 
and Dirichlet form for the coupled process.
Recall from Section \ref{sec:specific} that 
$\nu^N_{\widehat\theta}:=\nu^N_{\widehat\theta(\cdot)} $ is a product probability measure on
$\widehat \Sigma _N$, 
where $\widehat\theta=(\theta_1,\theta_2,\theta_3):\Lambda \to (0,1)^3$ 
is  a smooth function such that 
$\widehat\theta (\cdot){\big\vert_{\Gamma}} =\; \widehat b (\cdot)$. Let $\mu$ be a probability measure on
$\widehat \Sigma _N$ and denote by
$\overline{\mu}= \mu\otimes \mu$, $\overline\nu^N_{\widehat\theta} 
=\nu^N_{\widehat\theta}\otimes \nu^N_{\widehat\theta}$
the product measures on $\widehat \Sigma _N\times \widehat \Sigma _N$. 
As in \eqref{def:rel-ent} and \eqref{def:rel-ent_exp}, for a positive integer $n>1$, we define the
entropy of $\overline{\mu}$ with respect to $\overline\nu^N_{\widehat\theta}$ by
\begin{equation}\label{def:rel-ent_exp-couple}
\overline s_n(\overline \mu_n | \overline \nu_{\widehat\theta, n}^N)= \int 
 \log \left( \overline f_n((\zeta,\chi),(\xi,\omega)) \right) 
 d\overline \mu_n ((\zeta,\chi),(\xi,\omega)), 
\end{equation}
where $\overline f_n$ is the probability density of 
$\overline \mu_n$ with respect to $\overline \nu_{\widehat\theta, n}^N$, and
$\overline \mu_n$ (resp. $\overline \nu_{\widehat\theta, n}^N$) stands for the marginal of $\overline\mu$ 
(resp. $\overline \nu_{\widehat\theta}^N$) on ${\widehat \Sigma}_{N,n}\times {\widehat \Sigma}_{N,n}$
(see \eqref{eq:marginal}).

Let $\overline {\mf L}_{N,n}$ 
denote the restriction of the generator $\overline {\mf L}_{N}$
to the box $\Lambda_{N,n}$:  
\begin{eqnarray}\label{def:gen-Nn_couple}
&&\overline {\mf L}_{N,n}=N^2\overline {\mathcal L}_{N,n}\,
+\, \overline {\mathbb L}_{N,n}\, +\, N^2 {\overline L}_{\widehat b,N,n}  \qquad\mbox{with} \\
\label{def:gen-x-couple-exch}
&&\overline {\mathcal L}_{N,n} =\sum\limits_{k=1}^d \sum_{x,x+e_k\in\Lambda_{N,n}}
\big\{{\bf 1}_{\{x,x+ e_k \in \Lambda_{N,M_N}\}} 
{\overline {\mathcal L}}^{1,(x,x+ e_k)}
+{\bf 1}_{\{x\in {\mathcal A}(k)\}}
{\overline {\mathcal L}}^{2,(x,x+ e_k)}\big\}\\\label{def:gen-x-couple-react}
&&\overline {\mathbb L}_{N,n} = \sum_{x\in \Lambda_{N,n}}
\big\{{\bf 1}_{\{x \in \Lambda_{N,M_N}\}} 
({\overline {\mathbb L}}^{1,a,x}_N
+{\overline {\mathbb L}}^{1,b,x}_N+\overline {\mathbb L}^{2,a,x}_N)
+{\bf 1}_{\{x \notin \Lambda_{N,M_N}\}} \overline {\mathbb L}^{2,b,x}_N\big\}\\\label{def:gen-x-couple-bound}
&&\overline {L}_{\widehat b,N,n} = \sum_{x\in \Lambda_{N,n}\cap\Gamma_N} 
\big\{{\bf 1}_{\{x \in \Gamma_N\cap\Lambda_{N,M_N}\}} 
{\overline { L}}_{\widehat b, N}^{1,x}
 +{\bf 1}_{\{ x \in \Gamma_N,x \notin\Lambda_{N,M_N}\}} 
{\overline { L}}_{\widehat b, N}^{2,x}\big\}\, ,
 \end{eqnarray} 
Define the  Dirichlet forms
\begin{equation}\label{def:Dn-couple}
 \overline D_n(\overline{\mu}_n | \overline{\nu}_{\widehat\theta, n}^N ) 
=\overline{ \mathcal D}_n^0 (\overline{\mu}_n | \overline{\nu}_{\widehat \theta ,n}^N)\,
+\, \overline D^{\widehat b}_n  (\overline{\mu}_n | \overline{\nu}_{\widehat \theta ,n}^N) \, ,
\end{equation}
with each one defined similarly to \eqref{def:Dn0}--\eqref{def:dir-form-reaction-x},
but relatively to \eqref{def:gen-x-couple-exch}--\eqref{def:gen-x-couple-bound}.\\
Define  the specific entropy $\overline {\mathcal S} (\overline\mu|\overline{\nu}_{\widehat\theta}^N)$ 
and the Dirichlet form $\overline{\mathfrak D}(\overline\mu|\overline\nu_{\widehat\theta}^N)$ of  $\overline\mu$ with respect to $\overline\nu_{\widehat\theta}^N$ as
\begin{align}\label{def:spec-ent-couple}
\overline{\mathcal S} (\overline\mu|\overline\nu_{\widehat\theta}^N) 
& =N^{-1} \sum_{n\geq 1}\overline s_n(\overline\mu_n | \overline\nu_{\widehat\theta,n}^N)e^{- n/N},\\
\overline{\mf D}(\overline\mu|\overline\nu_{\widehat\theta}^N)  & = 
N^{-1}\sum_{n\geq 1} \overline D_n(\overline\mu_n | \overline\nu_{\widehat\theta,n}^N)e^{-n/N}.
\end{align}
Since the product measure $\overline\nu_{\widehat\theta}^N$ 
is reversible for the boundary generator
$\overline {L}_{\widehat b,N}$, next lemma has a  
 proof  similar to the one of Theorem \ref{dirichlet} (which is
therefore  omitted).
\begin{lemma}\label{dirichlet-couple}
For any time $t\ge 0$, there exists a positive finite constant 
$\overline C_1\equiv \overline C(t,\widehat \theta, \lambda_1, \lambda_2, r)$, so  that 
$$
\int_0^t \overline{\mf D}(\overline\mu (s)|\overline\nu_{\widehat\theta}^N)\, ds\, \le\, 
\overline C_1 N^{d-2}\, .
$$
\end{lemma}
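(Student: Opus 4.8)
The plan is to reproduce for the coupled process the two–step argument behind Theorem~\ref{dirichlet}. First I would establish the coupled analogue of Lemma~\ref{ent_bound}: there exist positive constants $\overline A_0,\overline A_1$, depending only on $\widehat\theta,\lambda_1,\lambda_2,r$, so that for all $t>0$
\[
\partial_t\,\overline{\mathcal S}(\overline\mu(t)\,|\,\overline\nu_{\widehat\theta}^N)\;\le\;-\,\overline A_0\,N^2\,\overline{\mf D}(\overline\mu(t)\,|\,\overline\nu_{\widehat\theta}^N)\;+\;\overline A_1\,N^d .
\]
Second, I would invoke the a priori bound $\overline{\mathcal S}(\overline\mu\,|\,\overline\nu_{\widehat\theta}^N)\le \overline C_0'\,N^d$, proved exactly as \eqref{entropy_0}--\eqref{entropy_Nd}: entropy convexity together with the finiteness of the per–site state space $\{0,1,2,3\}^2$ gives $\overline s_n(\overline\mu_n\,|\,\overline\nu_{\widehat\theta,n}^N)\le \overline C_0\,N n^{d-1}$, and comparing the resulting Riemann sum with an integral yields the $N^d$ bound. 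Integrating the entropy production inequality from $0$ to $t$, dropping $\overline{\mathcal S}(\overline\mu(t)|\cdot)\ge 0$ and using $\overline{\mathcal S}(\overline\mu(0)|\cdot)\le \overline C_0'N^d$, then gives $\int_0^t\overline{\mf D}(\overline\mu(s)|\overline\nu_{\widehat\theta}^N)\,ds\le \overline C_1 N^{d-2}$, which is the claim.

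For the entropy production inequality I would transcribe Steps~1--4 of the proof of Lemma~\ref{ent_bound}, replacing every single configuration by the pair $(\widetilde\eta,\eta)$ and performing every change of variables on both coordinates simultaneously. By the Kolmogorov forward equation $\partial_t\overline f_n^t=\langle\overline{\mf L}_{N,n+1}^*\,\overline f_{n+1}^t\rangle_{n+1}$, so that $\partial_t\overline s_n(\overline\mu_n(t)|\overline\nu_{\widehat\theta,n}^N)=\overline\Omega_1+\overline\Omega_2+\overline\Omega_3$, the three terms coming from $N^2\overline{\mathcal L}_{N,n+1}$, $\overline{\mathbb L}_{N,n+1}$ and $N^2\overline L_{\widehat b,N,n+1}$ respectively. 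Since $M_N=N^{1+1/d}\gg N$, the range $n\ge M_N$ is suppressed by $e^{-n/N}\le e^{-N^{1/d}}$ in \eqref{def:spec-ent-couple}, whereas for $n+1< M_N$ one has $\Lambda_{N,n+1}\dsubset\Lambda_{N,M_N}$, so in \eqref{def:gen-x-couple-exch}--\eqref{def:gen-x-couple-bound} only the coupled bulk pieces $\overline{\mathcal L}^{3}_N$, $\overline{\mathbb L}^{1,a}_N+\overline{\mathbb L}^{1,b}_N$ and the coupled boundary piece $\overline L_{\widehat b,N}^1$ are active (the interface pieces $\overline{\mathcal L}_N^4,\overline{\mathbb L}_N^2,\overline L_{\widehat b,N}^2$, supported at distance $\sim M_N$ from the origin, vanish on such boxes). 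Then: $\overline\Omega_1$ is handled by the change of variables $(\widetilde\eta,\eta)\mapsto(\widetilde\eta^{x,y},\eta^{x,y})$ exactly as in Consequences~\ref{csq:chg-var}\textit{(i)} — the factor $R_{i,j}^{x,y}(\widehat\theta)=O(N^{-1})$ being the same — giving $\overline\Omega_1\le-\frac{N^2}{2}\overline{\mathcal D}^0_n(\overline\mu_n(t)|\overline\nu_{\widehat\theta,n}^N)+CNn^{d-1}$ plus a remainder carried by the exchanges across $\partial\Lambda_{N,n}$, which is treated like $\Omega_1^{(2)}$ and later absorbed into $\overline{\mf D}$ via an inequality of the type \eqref{dirformalter}; $\overline\Omega_2\le-\overline{\mathbb D}_n(\overline\mu_n(t)|\overline\nu_{\widehat\theta,n}^N)+K_2Nn^{d-1}$, since all rates in $\overline{\mathbb L}^{1,a}_N+\overline{\mathbb L}^{1,b}_N$ are products of occupation variables with $\overline\beta_{M_N}\le 2d\lambda_1$, $r$ or $1$, hence bounded, so \eqref{ineq1}, the changes of variables \eqref{change_jump1}--\eqref{change_jump3} applied coordinatewise, and \eqref{ineq:encore}--\eqref{ineq2} work as for \eqref{entbound_int2}; and $\overline\Omega_3\le-N^2\overline D^{\widehat b}_n(\overline\mu_n(t)|\overline\nu_{\widehat\theta,n}^N)$, because the reversibility of $\overline\nu_{\widehat\theta}^N$ for $\overline L_{\widehat b,N}$ gives $\int\overline L^x_{\widehat b,N}\overline f_n^t\,d\overline\nu_{\widehat\theta,n}^N=0$ for every $x\in\Lambda_{N,n}\cap\Gamma_N$, so that after \eqref{4.21bis} and \eqref{ineq1} the $N^2$–order term disappears just as in \eqref{entbound_int3}. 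Multiplying by $N^{-1}e^{-n/N}$, summing over $n$, absorbing the $\partial\Lambda_{N,n}$ remainders by \eqref{dirformalter} and fixing the free constant (as $A=4K_1'$ in the proof of Lemma~\ref{ent_bound}) produces the entropy production inequality.

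The main obstacle is the boundary term $\overline\Omega_3$: it is the only place where the computation is not a purely formal transcription of the single–copy one, because it requires knowing that the product reference measure $\overline\nu_{\widehat\theta}^N=\nu_{\widehat\theta}^N\otimes\nu_{\widehat\theta}^N$ is invariant — in fact reversible — under the coupled boundary generator $\overline L_{\widehat b,N}$; this is what makes the $O(N^2 n^{d-1})$ contribution of the boundary flips cancel, and it rests on the compatibility \eqref{b-for-rev}, $\widehat\theta|_\Gamma=\widehat b$. A secondary point to verify once is that the interface pieces $\overline{\mathcal L}_N^4,\overline{\mathbb L}_N^2,\overline L_{\widehat b,N}^2$ play no role in the weighted sums; but this is immediate, since they do not appear in $\overline{\mf L}_{N,n}$ for $n<M_N$ and the tail $n\ge M_N$ is annihilated by $e^{-n/N}$ against the polynomial a priori bounds. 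Everything else is a line–by–line copy of Steps~1--4 above.
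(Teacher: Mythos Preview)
Your proposal is correct and follows exactly the approach the paper indicates: the paper states that the proof is ``similar to the one of Theorem~\ref{dirichlet}'' and singles out the reversibility of $\overline\nu_{\widehat\theta}^N$ for the coupled boundary generator $\overline L_{\widehat b,N}$ as the key ingredient, which is precisely what you identify as the main obstacle and resolve. Your treatment of the interface pieces $\overline{\mathcal L}_N^4,\overline{\mathbb L}_N^2,\overline L_{\widehat b,N}^2$ via the exponential weight is a legitimate shortcut (and in fact for $n+1<M_N$ the piece $\overline{\mathbb L}_N^{2,a,x}$ vanishes since $\beta_{N,M_N}^{out}(x,\eta)=0$ when all neighbours of $x$ lie in $\Lambda_{N,M_N}$), so nothing is missing.
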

%
\subsection{Proof of 
Proposition \ref{bddinf3} }\label{subsec:proof-th-hydro}
For $i\in\{0,1,2,3\}$,
 let $h_{M_N,i}$ be the function on $\{0,1,2,3\}^{\Lambda_N}\times\{0,1,2,3\}^{\Lambda_N}$ given by,
 for $((\zeta,\chi),(\xi,\omega))\in\widehat\Sigma_N\times \widehat\Sigma_N$ and $\eta=\eta((\xi,\omega)),\widetilde\eta=\widetilde\eta((\zeta,\chi))$, using \eqref{eq:omxieta}, \eqref{omega}, 
\begin{equation}\label{def:hMNi}
h_{M_N,i}(\widetilde \eta,\eta)\;  =\; N^{-d-1} \sum_{n=1}^{M_N-1} e^{-n/N}
H_{i,n} (\widetilde \eta,\eta),\quad\mbox{with }\quad
H_{i,n} (\widetilde \eta,\eta) \, =\, \sum_{x\in \Lambda_{N,n}}
\big|\widetilde\eta_i(x)-\eta_i(x)\big|\, .
\end{equation} 
Let $K$ be such that 
the (compact) support of $\widehat G$ is a subset of $[-1,1]\times [-K,K]^{d-1}$.
For any 
$i\in\{0,1,2,3\}$ and $t\ge 0$, we have
\begin{equation*}
 \begin{aligned}
 \dfrac{1}{N^{d}} \sum\limits_{x \in \Lambda_N}  
\big|G_{i,t}(x/N)\big| \big| {\widetilde \eta}_{i,t}(x) -\eta_{i,t}(x) \big|&
 \le  A_0 \dfrac{1}{N^{d}} \sum_{n=1}^{N}  e^{-(n+KN)/N}\, \frac{1}{N}\sum\limits_{x \in \Lambda_{N,n+KN}}  
 \big| {\widetilde \eta}_{i,t}(x) -\eta_{i,t}(x) \big|\\
 \ & \le A_0 h_{M_N,i}(\widetilde \eta_t,\eta_t)\, ,
 \end{aligned}
\end{equation*}
for some positive constant $A_0=A_0(\widehat G,K)$.
Therefore, in order to prove the proposition it is enough 
to show that,
$$
\lim_{N\rightarrow +\infty}  
{\overline {\mathbb E}}_{\overline\mu_N}^{M_N,\widehat b}
\left[ \sum_{i=0}^3 h_{M_N,i}(\widetilde \eta_{t},\eta_{t}) \right]\; =\; 0\; .
$$
We start by splitting the quantity $h_{M_N,i}(\widetilde \eta_{t},\eta_{t})$ into two parts: 
The sum over all sites $n$, such that $M_N-N \le n\le M_N-1$
and the sum over sites $n<M_N-N$. By Hille-Yosida Theorem
\begin{equation}\label{bof0}
 \begin{aligned}
\partial_t {\overline {\mathbb E}}_{\overline\mu_N}^{M_N,\widehat b}
\left[ \sum_{i=0}^3  h_{M_N,i}(\widetilde \eta_{t},\eta_{t}) \right]
&\, =\, N^{-1-d} \sum^{M_N-1}_{n= M_N-N }e^{-n/N} 
{\overline {\mathbb E}}_{\overline\mu_N}^{M_N,\widehat b} 
\left[  \sum_{i=0}^3\overline {\mf L}_{N} H_{i,n} (\widetilde \eta_{t},\eta_{t})\right]\\
\ &\ \, +\, N^{-1-d} \sum_{n=1}^{M_N-N-1} e^{-n/N}
{\overline {\mathbb E}}_{\overline\mu_N}^{M_N,\widehat b} 
\left[  \sum_{i=0}^3\overline {\mf L}_{N} H_{i,n} (\widetilde \eta_{t},\eta_{t}) \right]\, .
 \end{aligned}
\end{equation}
The first part is bounded by a quantity vanishing when $N\uparrow \infty$. 
Indeed, since for each $x\in \L_N$, 
$\overline {\mf L}_{N} |\widetilde\eta_{i,t}(x)-\eta_{i,t}(x)| \le CN^2 $ 
for some positive constant $C$, we have
\begin{equation}\label{bof1}
N^{-1-d} \sum^{M_N-1}_{n= M_N-N } e^{-n/N} 
{\overline {\mathbb E}}_{\overline\mu_N}^{M_N,\widehat b} 
\left[  \sum_{i=0}^3\overline {\mf L}_{N} H_{i,n} (\widetilde \eta_{t},\eta_{t})\right]
\le K_1 N^{3-d}(M_N)^{d-1}e^{-N^{1/d}},
\end{equation}
for some positive constant $K_1$.

\noindent
We now split the second term of the r.h.s. of \eqref{bof0} 
according to the decomposition of the generator in \eqref{def:gen-Nn_couple}.
Recalling \eqref{def:Lcoupl-exch}-\eqref{Lcoupl-exch4}, 
 we have
\begin{equation*}
\begin{aligned}
&{\overline {\mathbb E}}_{\overline\mu_N}^{M_N,\widehat b} 
\left[ {\overline {\mathcal L}}_{N} H_{i,n}(\widetilde \eta_{t},\eta_{t}) \right]\\
& = 
 \sum\limits_{k=2}^d \sum\limits_{(x,y) \in \Lambda_{N,n}\times\Lambda_{N,n}^c\atop
y\in\{x+e_k,x-e_k\}}  
 \int  \Big[ |\widetilde \eta_{i}(y)-\eta_{i}(y) |
  -|\widetilde \eta_{i}(x)-\eta_{i}(x) |\Big] 
  \overline f_{n+1}^t((\zeta,\chi),(\xi,\omega))
   d \overline \nu_{\widehat\theta}^N((\zeta,\chi),(\xi,\omega))\\
 \ & = 
 \frac{1}{N}\sum_{\ell=1}^N\sum\limits_{k=2}^d 
 \sum\limits_{(x,y) \in \Lambda_{N,n}\times\Lambda_{N,n}^c\atop
y\in\{x+e_k,x-e_k\}}  
 \int  \Big[ |\widetilde \eta_{i}(y)-\eta_{i}(y) | 
 -|\widetilde \eta_{i}(x)-\eta_{i}(x) |\Big] 
 \overline f_{n+\ell}^t((\zeta,\chi),(\xi,\omega)) 
 d \overline \nu_{\widehat\theta}^N((\zeta,\chi),(\xi,\omega))
  \, , 
\end{aligned}
\end{equation*}
where we introduced a new sum in the spirit of \eqref{new-sum}.
Using the equality, for $j\in\{1,2,3\}$ and $y\in \Lambda_N$, 
\begin{equation}\label{eq:absval}
\eta_j(y) (1- \widetilde\eta_j(y))+(1-\eta_j(y)) \widetilde\eta_j(y)
= 
|\eta_j(y)- \widetilde\eta_j(y)|\, ,
\end{equation}
as well as the change of variables \eqref{change_stir} for $\overline f_{n+\ell}^t$,
and analogous computations as for 
Consequences \ref{csq:chg-var}\textit{(i)}, 
using \eqref{ineq2} with $A=N/a$ for some $a>0$ to be chosen later, 
\eqref{change_stir} and \eqref{Rij-Taylor}
(we do not give details as this is similar to what was done in the proof of Lemma \ref{ent_bound}),  
 we obtain, for  $n\le M_N-N-1$,
\begin{eqnarray}
{\overline {\mathbb E}}_{\overline\mu_N}^{M_N,\widehat b} 
\left[ {\overline {\mathcal L}}_{N}
\sum_{i=0}^3 H_{i,n}(\widetilde \eta_{t},\eta_{t}) \right]
 &\le & \frac{a}{N}\sum_{\ell=1}^N\sum\limits_{k=2}^d 
\sum\limits_{(x,y) \in \Lambda_{N,n}\times\Lambda_{N,n}^c\atop
y\in\{x+e_k,x-e_k\}}  
({\overline{\mathcal D}}^0_{n+\ell})^{x,y}
(\overline{\mu}_{n+\ell}(t) | \overline{\nu}_{\widehat \theta ,n+\ell}^N)
\nonumber\\&& \qquad + 
\frac1a C_2  n^{d-2}\, ,\label{bof2}
\end{eqnarray}
for some positive constant $C_2$. 

Now, gathering \eqref{bof1} with \eqref{bof2}, multiplying by $N^{-d-1} \exp(-n/N)$, summing 
over $1 \leq n \leq M_N-1$ and using \eqref{dirformalter},  we get (as in the transition from
\eqref{eq:last-for-s} to \eqref{ent_bound:eq})
\begin{equation}\label{bof3}
{\overline {\mathbb E}}_{\overline\mu_N}^{M_N,\widehat b} 
\left[ N^2{\overline {\mathcal L}}_{N} 
\sum_{i=0}^3 h_{M_N,i}(\widetilde \eta_{t},\eta_{t}) \right]
\le  K_1 N^{3-d}(M_N)^{d-1}e^{-N^{1/d}}
+ aN^{1-d} 
\overline{\mf D}(\overline\mu(t)|\overline\nu_{\widehat\theta}^N) +\frac{C'_2}a 
\, 
\end{equation}
for some positive constant $C'_2$.
Then, recalling \eqref{eq:gen-coupl-cprs}--\eqref{eq:gen-coupl-cprs-2}, we have
\begin{align}
\overline {\mathbb L}_N  |\eta_i(x) - \widetilde \eta_i(x)| 
& \leq  - 2C''_2 |\eta_i(x) - \widetilde \eta_i(x)| 
 + 34 C''_2 \sum\limits_{j =0}^3 (1-\eta_j(x))\widetilde \eta_j(x)\nonumber \\
 & \leq  - 2C''_2|\eta_i(x) - \widetilde \eta_i(x)| 
 + 34 C''_2 \sum\limits_{j =0}^3 |\eta_j(x) - \widetilde \eta_j(x)|\label{bof4}
\end{align}
with  $C''_2 = \max \big( r , 2d\lambda_1 \big) $, where we used \eqref{eq:absval} for the second
inequality.\\ 
Finally, recalling \eqref{Lb-coupl-alternative_1}--\eqref{Lb-coupl-alternative_2},
we have
\begin{align}
& {\overline { L}}_{\widehat b, N}^1|\widetilde \eta_{i}(x)-\eta_{i}(x) |
= -\Big(\sum\limits_{j=0}^3 b_j(x/N)\Big) |\widetilde \eta_i(x) - \eta_i(x)|\leq 0. \label{bof5}
\end{align}
%
Collecting all the above estimates,  we obtain,
for some positive constant $A_1$,
\begin{equation*}
 \begin{aligned}
\partial_t {\overline {\mathbb E}}_{{\overline \mu}_N}^{M_N,\widehat b}
\left[ \sum_{i=0}^3  h_{M_N,i}(\widetilde \eta_{t},\eta_{t}) \right]&
\leq A_1 {\overline {\mathbb E}}_{\mu_N}^{M_N,\widehat b}
\left[ \sum_{i=0}^3  h_{M_N,i}(\widetilde \eta_{t},\eta_{t}) \right] \\ 
\ & \quad +K_1 N^{3-d}(M_N)^{d-1}e^{-N^{1/d}}
+ aN^{1-d} 
\overline{\mf D}(\overline\mu(t)|\overline\nu_{\widehat\theta}^N) + \frac{C'_2}a 
\; .
 \end{aligned}
\end{equation*}
Integrating in time this inequality and using Lemma \ref{dirichlet-couple} gives,
for some positive constant $A_2$, 
\begin{equation*}
 {\overline {\mathbb E}}_{{\overline \mu}_N}^{M_N,\widehat b}
\left[ \sum_{i=0}^3  h_{M_N,i}(\widetilde \eta_{t},\eta_{t}) \right]
\ \le  A_1\int_0^t {\overline {\mathbb E}}_{{\overline \mu}_N}^{M_N,\widehat b}
\left[ \sum_{i=0}^3  h_{M_N,i}(\widetilde \eta_{s},\eta_{s}) \right]ds
 +A_2\left[ N^{3-d}(M_N)^{d-1}e^{-N^{1/d}} +\frac{a}{N}+\frac{1}{a}\right]
\; .
\end{equation*} 
Applying now Gronwall lemma, and choosing $a=\sqrt{N}$, we obtain
$$
{\overline {\mathbb E}}_{{\overline \mu}_N}^{M_N,\widehat b} 
\left[ \sum_{i=0}^3  h_{M_N,i}(\widetilde \eta_{t},\eta_{t}) \right]
\; \le\; A_2 \left[ N^{3-d}(M_N)^{d-1}e^{-N^{1/d}} +\frac{2}{\sqrt{N}}\right]
  e^{C t} \, t \; .
$$
To conclude the proof of the proposition, we just have to recall that 
 by our choice of $M_N$ in \eqref{def:MN},
the sequence $M_N$ is such that  $N^{3-d}(M_N)^{d-1}e^{-N^{1/d}}$ 
decreases to $0$ as $N\uparrow\infty$. 
\section{Uniqueness of weak solutions of equation \eqref{f00}}\label{sec:uniqueness}
This section is devoted to the proof of uniqueness of weak solutions of equation \eqref{f00} in infinite volume with 
 reservoirs. 

We need to introduce the following notation and tools.
Denote by $L^2((-1,1))$ the Hilbert space on the one-dimensional 
bounded interval $(-1,1)$ equipped with the inner product,
\begin{equation*}
\< \varphi,\psi \>_2 =\int_{-1}^1 \varphi(u_1) \, \overline {\psi(u_1)} \, du_1\; ,
\end{equation*}
where, for $z\in\mathbb C$, $\bar z$ is the complex conjugate of $z$ and
$|z|^2 =z{\bar z}$. The norm in $L^2((-1,1))$ is denoted by $\|
\cdot \|_2$.

Let $H^1((-1,1))$ be the Sobolev space of functions $\varphi$ with
generalised derivatives $\partial_{u_1} \varphi$
in $L^2((-1,1))$. Endowed with the scalar product
$\<\cdot, \cdot\>_{1,2}$, defined by
\begin{equation*}
\<\varphi,\psi\>_{1,2} = \< \varphi, \psi \>_2 +\<\partial_{u_1} \varphi \, , \, \partial_{u_1} \psi \>_2\;,
\end{equation*}
$H^1((-1,1))$ is a Hilbert space. The corresponding norm is denoted by
$\|\cdot\|_{1,2}$.
Denote by $H_0^1((-1,1))$
the closure of ${\mathcal C}^\infty_c((-1,1);\R)$ in $H_1((-1,1))$.

Consider the following classical
boundary-eigenvalue problem for the Laplacian:
\begin{equation}
\label{eq:laplace-eq}
 \left\{ \begin{array}{lll}
 -\Delta \varphi =\alpha  \varphi \,,  & \  \\
  \varphi \in  H^1_0((-1,1)) \, . & \ \end{array}
     \right. 
\end{equation}
{}From the Sturm--Liouville theorem (cf. \cite{zui}), one can construct for the problem (\ref{eq:laplace-eq})
a countable system $\{\varphi_n,\alpha_n :n\ge 1\}$ of eigensolutions which contains all possible eigenvalues.
The set $\{\varphi_n : n\ge 1\}$ of eigenfunctions forms a complete
orthonormal system in the Hilbert space $L^2((-1,1))$. Moreover each $\varphi_n$
belongs to $H^1_0 ((-1,1))$ and the set $\{\varphi_n/\alpha_n^{1/2} : n\ge 1\}$ is a complete orthonormal
system in the Hilbert space $H^1_0 ((-1,1))$.  Hence, a function $\psi$
belongs to $L^2((-1,1))$ if and only if
\begin{equation*}
\psi \;=\; \lim_{n\to \infty} \sum_{k=1}^n \<\psi,\varphi_k\>_2 \, \varphi_k 
\quad\mbox{in}\quad L^2((-1,1)).
\end{equation*}
%
In this case, for all $\psi_1,\psi_2\in L^2((-1,1))$
\begin{equation*}
\<\psi_1,\psi_2\>_2 \;=\;  \sum_{k=1}^\infty  \<\psi_1,\varphi_k\>_2\,
\overline{ \<\psi_2,\varphi_k\>_2 }\, .
\end{equation*}
Furthermore, a function $\psi$ belongs to
$H^1_0((-1,1))$ if and only if
\begin{equation*}
\psi \;=\; \lim_{n\to \infty} \sum_{k=1}^n \<\psi,\varphi_k\>_2 \, \varphi_k
\quad\mbox{in}\quad H^1_0((-1,1)),\quad\mbox{and} \\
\end{equation*}
\begin{equation}
\label{f04}
\<\psi_1,\psi_2\>_{1,2} \;=\;  \sum_{k=1}^\infty  \alpha_k \<\psi_1,\varphi_k\>_2\,
\overline{ \<\psi_2,\varphi_k\>_2 }
\quad\mbox{for all}\quad \psi_1,\psi_2\in H^1_0((-1,1)).
\end{equation}
One can check that since we work with $(-1,1)$, 
$\alpha_n=n^2\pi^2$ and $\varphi_n (u_1)= \sin(n\pi u_1)$, $n\in\mathbb N$.

\begin{proposition}\label{unique:bdd-infty}
For any $T>0$, the system of equations \eqref{f00} has a unique weak solution in the class  $\big(L^\infty \big([0,T] \times \Lambda \big)\big)^3$.
\end{proposition}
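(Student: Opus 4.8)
The plan is to prove uniqueness by an energy (a priori $L^2$) estimate on the difference of two weak solutions, combined with a cutoff procedure to handle the infinite cross-section $\mathbb R^{d-1}$ and a change of unknown to make the boundary data homogeneous. Suppose $\widehat\rho^{(1)}$ and $\widehat\rho^{(2)}$ are two weak solutions of \eqref{f00} in the class $\big(L^\infty([0,T]\times\Lambda)\big)^3$, and set $\widehat w = \widehat\rho^{(1)} - \widehat\rho^{(2)}$. Since both solutions take the same boundary value $\widehat b$ on $\Gamma$ and the same initial datum $\widehat\gamma$, the difference $\widehat w$ formally satisfies $\partial_t \widehat w = \Delta\widehat w + \widehat F(\widehat\rho^{(1)}) - \widehat F(\widehat\rho^{(2)})$ with zero boundary and initial conditions. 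The reaction term $\widehat F$ is a polynomial, hence Lipschitz on the bounded set $[0,1]^3$ (and more generally on any bounded set, which is what the $L^\infty$ hypothesis provides), so there is a constant $L = L(\lambda_1,\lambda_2,r)$ with $|\widehat F(\widehat\rho^{(1)}) - \widehat F(\widehat\rho^{(2)})| \le L\,|\widehat w|$ a.e.

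The core computation is to test the equation against $\widehat w$ itself (componentwise) and integrate over $\Lambda$ and over $[0,t]$. First I would note that a weak solution, having $\rho_i\in L^\infty$ and satisfying (IB2) with the stated Dirichlet boundary trace, actually has the regularity $\rho_i \in L^2((0,T);H^1_0(\text{locally}))$ after subtracting a fixed smooth extension $\widehat\theta$ of $\widehat b$ — this is the standard parabolic bootstrap from the weak formulation, and I would invoke it (or reduce to it) before writing the energy identity. Using $\widehat w = \widehat\rho^{(1)}-\widehat\rho^{(2)}$ as a test function and integrating by parts in space produces
\begin{equation*}
\frac12 \frac{d}{dt}\|\widehat w(t)\|_{L^2(\Lambda)}^2 = -\|\nabla\widehat w(t)\|_{L^2(\Lambda)}^2 + \langle \widehat F(\widehat\rho^{(1)}_t)-\widehat F(\widehat\rho^{(2)}_t),\widehat w(t)\rangle \le L\,\|\widehat w(t)\|_{L^2(\Lambda)}^2,
\end{equation*}
the boundary term vanishing because $\widehat w$ has zero trace on $\Gamma$ and the spatial decay in the $\mathbb R^{d-1}$ directions is controlled by a cutoff argument. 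Gronwall's lemma together with $\widehat w(0)=0$ then forces $\widehat w\equiv 0$, i.e.\ uniqueness. The expansion of $L^2((-1,1))$ and $H^1_0((-1,1))$ in the Sturm--Liouville eigenbasis $\{\varphi_n,\alpha_n\}$ recorded just before the statement is the tool I would use to make the integration by parts in the bounded $u_1$-direction rigorous: writing $\widehat w(t,u_1,u') = \sum_n \widehat c_n(t,u')\varphi_n(u_1)$, the identity \eqref{f04} turns $-\langle\Delta w, w\rangle$ in the $u_1$ variable into $\sum_n \alpha_n |\widehat c_n|^2 \ge 0$, which is exactly the sign needed, and it also justifies that the boundary contributions at $u_1=\pm1$ drop out since each $\varphi_n\in H^1_0((-1,1))$.

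The main obstacle is making this formal energy estimate legitimate in the infinite-volume geometry: one cannot simply use $\widehat w$ as a test function because it need not have compact support in $\Lambda$, and the weak formulation (IB2) only allows test functions in $\mathcal C^{1,2}_{c,0}$. I would handle this by (i) a spatial cutoff $\chi_R(u') = \chi(u'/R)$ in the unbounded directions, using $\widehat w\,\chi_R^2$ (suitably mollified in time and truncated to be an admissible test function) and absorbing the commutator terms $\nabla w\cdot\nabla\chi_R^2$, which are $O(1/R)$ in $L^2$ against the $L^\infty\cap L^2$ bounds, letting $R\to\infty$ at the end; and (ii) a time mollification to justify the chain rule $\frac{d}{dt}\frac12\|w\|^2 = \langle\partial_t w, w\rangle$, standard for parabolic problems once $w\in L^2(H^1_{\mathrm{loc}})$ and $\partial_t w \in L^2(H^{-1}_{\mathrm{loc}})$. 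Once the cutoff and time-regularization are in place, the remainder of the argument is the routine Gronwall step sketched above. I would also remark that the identical argument, with $H^1_0$ genuinely available because the cross-section is compact, gives the uniqueness statements needed for Theorems \ref{infvol:hydrodynamics} and \ref{bdd:hydrodynamics}.
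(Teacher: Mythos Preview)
Your approach is genuinely different from the paper's and, as written, has a real gap in the cutoff step. You test the equation for $\widehat w$ against $\widehat w\chi_R^2$ and claim the commutator $\int \widehat w\,\nabla\widehat w\cdot\nabla\chi_R^2$ is $O(1/R)$ ``against the $L^\infty\cap L^2$ bounds''. But the only a priori information on $\widehat w$ is that it lies in $L^\infty$; nothing in (IB1)--(IB3) places $\widehat w$ in $L^2(\Lambda)$, and on the unbounded cylinder $\Lambda=(-1,1)\times\mathbb R^{d-1}$ an $L^\infty$ function is generically not $L^2$. If you estimate the commutator honestly using only $\|\widehat w\|_\infty$, you get after Young's inequality a term of size $\|\widehat w\|_\infty^2\int|\nabla\chi_R|^2 \sim R^{d-3}$, since $\nabla\chi_R$ is supported on an annulus of volume $\sim R^{d-1}$ and has magnitude $\sim R^{-1}$. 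This does not vanish for $d\ge 3$, so Gronwall gives no information. (A fix is to replace the compactly supported $\chi_R$ by an exponential weight $\phi_\alpha(u')=e^{-\alpha\langle u'\rangle}$, for which $|\nabla\phi_\alpha|\le\alpha\phi_\alpha$ pointwise and the commutator is absorbed into the weighted $L^2$ norm itself; then the argument closes for every $d$. But that is not what you wrote.) A secondary issue is the ``standard parabolic bootstrap'' to $H^1_{\mathrm{loc}}$: this is plausible but not part of the hypotheses, and you would have to prove it from the very weak formulation (IB2) before you may integrate by parts or use $\widehat w$ as a test function.

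The paper avoids both difficulties by a duality argument rather than an energy estimate. It tests the difference $\overline m_i=\rho_i^{(1)}-\rho_i^{(2)}$ against the \emph{heat kernel} $\mathcal H_{t,\varepsilon}^{f_i}(s,u)=\big(p_1\otimes\check p\big)(t+\varepsilon-s,\cdot)\!*\!f_i$, where $p_1$ is built from the Sturm--Liouville basis on $(-1,1)$ and $\check p$ is the Gaussian kernel on $\mathbb R^{d-1}$. Since this test function is smooth, compactly approximable, and vanishes at $u_1=\pm1$, it is admissible in (IB2) without any regularity of $\widehat\rho$. The key estimate is the $L^\infty$--$L^1$ bound $\int_0^t|\langle m(s,\cdot),\mathcal H_{t,\varepsilon}^{f_i}(s,\cdot)\rangle|\,ds\le C\,t\,\|m\|_\infty\|f_i\|_1$, which exploits that $\check p$ is a probability kernel in $\check u$. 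Combined with the Lipschitz bound on $\widehat F$ this yields $|\langle\overline m_i(\tau),f_k\rangle|\le C't\,R(t)\,\|f_k\|_1$ with $R(t)=\sum_j\|\overline m_j\|_{L^\infty([0,t]\times\Lambda)}$; a short measure-theoretic argument then upgrades this to $\|\overline m_i\|_\infty\le C't\,R(t)$, hence $R(t)\le 3C't\,R(t)$, forcing $R(t)=0$ on a short interval and then iterating. The point is that working in $L^\infty$ paired with $L^1$ test functions is exactly adapted to the infinite cross-section and requires no extra regularity of the weak solution.
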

\begin{proof}[Proof of Proposition \ref{unique:bdd-infty}]
We follow the arguments in \cite{MM00} adapted to the our case.

Fix $T>0$, define the heat kernel on the time interval $(0,T]$ by the following expression
$$
p_1(t,u_1,v_1)\, =\, \sum_{n\ge 1}e^{-\alpha_n t }\varphi_n(u_1)\overline{\varphi_n(v_1)}\, ,\; t\in [0,T]\, ,\; u_1,v_1\in [-1,1]\, .
$$
Let $g\in \mathcal C_c^0 ((-1,1);\mathbb R)$ and denote by $\delta_{\cdot}$ the Dirac function. 
The heat kernel $p_1$ is such that 
$p_1(0,u_1,v_1)=\delta_{u_1-v_1}$, $p_1\in \mathcal C^{\infty}((0,T]\times (-1,1)\times (-1,1);\mathbb R)$
and the function defined via the convolution operator:
$$
\varphi_1(t,u_1):=(p_1\star g)(t,u_1)=\int_{-1}^1 p_1(t,u_1,v_1)g(v_1) dv_1 
$$ solves the following boundary value problem
\begin{equation}
\label{eq:chal}
\left\{ 
\begin{array}{l}
\partial_t \varphi = \partial_{u_1}^2 \varphi\, , \\ 
\varphi (0 ,\cdot) =\; g (\cdot) \, ,   \\
\varphi (t, \cdot)\in H_0^1((-1,1))\; \; \text {for } 0< t\le T \;.
\end{array}
\right. 
\end{equation}
Let $\check p$ be the heat kernel for 
$(t, \check u,\check v)\in(0,T)\times \mathbb R^{d-1}\times \mathbb R^{d-1}$ 
$$
\check p(t, \check u,\check v)= \big(4\pi t \big)^{-(d-1)/2}
\exp\Bigg\{ -{1 \over 4 t } \sum_{k=2}^d (u_k-v_k)^2 \Bigg\}.
$$
For each function $\check f\in{\mathcal C}_c({\mathbb R}^{d-1};\mathbb R)$, it is known that 
$$
\check {h}_{t}^{\check f}(t,\check u):=(\check p\star \check f)(t,\check u)
 =\int_{\mathbb R^{d-1}}\check p(t, \check u,\check v)\check f( \check v) d \check v\, .
$$
 solves the equation 
$\partial_t \check\rho =\Delta \check\rho$, $\check\rho_0 =f$,
on $(0,t]\times {\mathbb R}^{d-1}$. Moreover 
$\check{h}_{t}^f\in \mathcal C^\infty ((0,T] \times \mathbb R^{d-1};\mathbb R)$.

For 
$t\in (0,T]$, 
$\widehat f=(f_1,f_2,f_3)\in{\mathcal C}_c(\overline {\Lambda}; \mathbb R^3)$ and $\varepsilon >0$ 
small enough, let 
${\mathcal H}_{t,\varepsilon}^{\widehat f} :[0,t]\times \Lambda \longrightarrow \mathbb R$ be defined by
$$
{\mathcal H}_{t,\varepsilon}^{\widehat f}(s,u):= \sum_{i=1}^3
{\mathcal H}_{t,\varepsilon}^{f_i}(s,u):=\sum_{i=1}^3\big( p * f_i \big)(t+\epsilon-s, u),
$$
where $p$ is the heat kernel on $(0,T]\times \Lambda \times \Lambda$ given by 
$$
p(t,u,v)=p_1(t,u_1,v_1) \check p(t, \check u,\check v).
$$
Then ${\mathcal H}_{t,\varepsilon}^f$ solves the equation $\partial_t \rho =\Delta \rho$
on $(0,t]\times {\mathbb R}^d$, $\rho_0=f$.

Consider $\widehat \rho^{(1)}=(\rho^{(1)}_1,\rho^{(1)}_2,\rho^{(1)}_3)$ 
and $\widehat \rho^{(2)}=(\rho^{(2)}_1,\rho^{(2)}_2,\rho^{(2)}_3)$ two weak solutions of 
\eqref{f00} associated to an initial profile 
$\widehat  \gamma =(\gamma_1,\gamma_2,\gamma_3): \Lambda \rightarrow [0,1]^3$. 
Set $\overline{m}_i= \rho^{(1)}_i -\rho^{(2)}_i$, $1\le i\le 3$.
We shall prove below that for any function 
$m(\cdot,\cdot)\in L^\infty ([0,T]\times \Lambda)$ and each $i\le i\le d$,
\begin{equation}\label{ineq-uniq}
\int_0^t ds \left| \int_{\Lambda}   m(s,u)
{\mathcal H}_{t,\varepsilon}^{f_i} (s,v)dv\right| ds
\, \le\,  C_1 t\, \|m\|_\infty\| f_i\|_1,
\end{equation}
for some positive constant $C_1$, where for a trajectory 
$m :[0,t]\times \Lambda \to \mathbb R$, $\|m\|_\infty = \|m\|_{L^\infty([0,t]\times \Lambda)}$
stands for the infinite norm in $L^\infty([0,t]\times \Lambda)$.

On the other hand, from the fact that 
$\rho_i^{(1)}, \rho_i^{(2)}$, $1\le i\le 3$ are in $L^\infty ([0,T]\times \Lambda)$, it follows that
there exists a positive constant $C_2$ such that, 
for almost every $(s,u)\in [0,t]\times \Lambda$, for every $1\le i\le 3$,
\begin{equation*}
\big| F_i(\rho_i^{(1)}(s,u))- F_i(\rho_i^{(2)}(s,u))\big|\le C_2  \sum_{i=1}^3 \|\rho_i^{(1)}-\rho_i^{(2)} \|_\infty\, .
\end{equation*}
Since $\widehat \rho^{(1)}$ and $\widehat \rho^{(2)}$ are two weak solutions of \eqref{f00}, we obtain by
\eqref{ineq-uniq} that for all $0\le \tau\le t$, $1\le i,k\le 3$
\begin{equation*}
 \begin{aligned}
\Big| \big< \overline{m}_i (\tau,.), {\mathcal H}_{\tau,\varepsilon}^{f_k} (\tau,.)\big>\Big|&
=\sum_{i=1}^3 \Big| \int_0^\tau  \big<F_i(\widehat \rho^{(1)}) 
- F_i(\widehat \rho^{(2)}), {\mathcal H}_{\tau,\varepsilon}^{f_k} (\tau,.)\big>\Big|\\
\ & \le C_1' t \, \Big(\sum_{i=1}^3 \|\rho_i^{(1)}-\rho_i^{(2)} \|_\infty\Big)\,  \| f_k\|_1\, ,
 \end{aligned}
\end{equation*}
for $C_1'=C_1C_2$. 

By observing that $p(\varepsilon,\cdot,\cdot)$ is an approximation of the identity in $\varepsilon$,
 we obtain by letting $\varepsilon \downarrow 0$,
\begin{equation}\label{uniq3}
\Big| \big< \overline{m}_i (\tau,.), f_k \big>\Big|
\,\le \, C_1' t\, \Big(\sum_{i=1}^3 \|\rho_i^{(1)}-\rho_i^{(2)} \|_\infty\Big)\, \| f_k\|_1\, .
\end{equation}
We claim that $\overline{m}_i\in L^\infty ([0,t]\times \Lambda)$ and
\begin{equation}\label{uniq30}
 \|\overline{m}_i \|_\infty \, \le \, C_1' \, t\, \Big(\sum_{i=1}^3 \|\rho_i^{(1)}
 -\rho_i^{(2)}\|_\infty \Big)\, .
\end{equation}
Indeed (cf. \cite{Oel85}, \cite{MM00}), denote  by 
$R(t)=\sum_{i=1}^3 \|\rho_i^{(1)}-\rho_i^{(2)} \|_\infty\,$, 
by \eqref{uniq3}, for any open set $U$ of $\Lambda$ with 
finite Lebesgue measure $\lambda(U)$, we have for all $0\le \tau\le t$,
\begin{equation}\label{uniq30a}
\int_{U} \overline{m}_i(\tau,u)du\le C_1'\, t\, R(t) \lambda(U).
\end{equation}
Fix $0< \delta <1$. For any open set $U$ of $\Lambda$ with finite Lebesgue 
measure and for $0\le \tau\le t$ let
$$
B_{\delta,\tau}^U =\Big\{u\in U\; :\;\; \overline{m}_i(\tau,u)>C_1'\, t\, R(t)
(1+\delta)  \Big\}.
$$
Suppose that $\lambda(B_{\delta,\tau}^U)>0$, there exists an open set $V$, such 
that, $B_{\delta,\tau}^U \subset V$ and 
$\lambda\big(V\setminus  B_{\delta,\tau}^U \big)\le \lambda(V){\delta\over 2}$ 
and we have
\begin{equation*}
\begin{aligned}
\lambda(V)\big(C_1' \, t\, R(t) \big) & <
\lambda(V)\big(C_1'\, t\, R(t) \big) (1+\delta)(1-\delta/2) = \big(C_1'\, t\, R(t) \big) (1+\delta)
\big( \lambda(V)-\lambda(V)\delta/2\big)\\
&\le \big(C_1't\, R(t) \big) (1+\delta)\big( \lambda(V)
  - \lambda\big(V\setminus  B_{\delta,\tau}^U \big)\big) =\big(C_1'\sqrt{t}R(t) \big) (1+\delta)  
   \lambda\big( B_{\delta,\tau}^U \big)\\
&< \int_{B_{\delta,\tau}^U} \overline{m}_i(\tau,x)dx\, .
\end{aligned}
\end{equation*}
Thus, from \eqref{uniq30a} and since $B_{\delta,\tau}^U \subset V$, we get
\begin{equation*}
\begin{aligned}
\lambda(V)\big(C_1' \, t\, R(t) \big)  
&< \int_{V} \overline{m}_i(\tau,x)dx
&\le \big(C_1'\, t\, R(t) \big)\lambda(V)\, ,
\end{aligned}
\end{equation*}
which leads to a contradiction.

By the arbitrariness of $0< \delta <1$ we obtain that if $U$ 
is any open set of $\Lambda$ with $\lambda(U)<\infty$,
$$
\lambda \Big(\Big\{u\in U\; :\;\; \overline{m}_i(\tau,u)>C_1'\, t\, R(t) \Big\}\Big)=0.
$$
This implies
$$
\overline{m}_i(\tau,x)\le C_1'\, t\, R(t) \qquad {\rm a.e.}\ {\rm in}\ \Lambda
$$
and concludes the proof of \eqref{uniq30} by the arbitrariness of $\tau\in [0,t]$.

We now turn to the proof of the uniqueness, from \eqref{uniq30},
\begin{equation*}
 \|\overline{m}_i \|_\infty \, \le \, C_1' \, t\, \Big(\sum_{j=1}^3 \|\overline{m_j}\|_\infty \Big)\, ,
\end{equation*}
and then
$$
R(t)\le 3 C_1' \, t\, R(t)\, .
$$
Choosing $t=t_0$ such that $3C_1'\, t_0\, <1$, this gives uniqueness in 
$[0,t_0]\times \Lambda$. To conclude the proof we just have to repeat the same arguments in $[t_0,2t_0]$,
and in each interval $[kt_0,(k+1)t_0]$, $k\in \mathbb N$, $k>1$.
It remains to prove inequality \eqref{ineq-uniq}. {}From Fubini's Theorem, we have
\begin{equation*}
 \begin{aligned}
&\int_0^t \left| \int_{\Lambda} 
m(s,u){\mathcal H}_{t,\varepsilon}^{f_i} (s,u)du\right| ds \\
&\le \int_0^t ds \int_{\mathbb R^{d-1}}d \check v\int_{\mathbb R^{d-1}}d \check u
\Bigg|\sum_{n\ge 1}e^{-n^2\pi^2 (t+\varepsilon-s)} 
\int_{-1}^1 d v_1 \Big\{\sin (n\pi v_1) f_i(v_1,\check v)\Big\} \\
&\qquad\qquad\qquad\qquad\quad \times
\int_{-1}^1 d u_1 \Big\{\sin (n\pi u_1)\check p(t+\varepsilon- s,\check u,\check v) m(s,u_1,\check u)\Big\}
\Bigg|\\
&\le 
\int_0^t ds \int_{\mathbb R^{d-1}}d \check v\int_{\mathbb R^{d-1}}d \check u\; 
\check p(t+\varepsilon- s,\check u,\check v)
\Bigg|\sum_{n\ge 1} \Big<\varphi_n, m(s,(\cdot,\check u))\Big> 
\times \Big<\varphi_n, f_i(\cdot,\check v) \Big>
\Bigg|\\
&\le
\int_0^t ds\int_{\Lambda} du \int_{\Lambda} dv 
\Big\{ |m(s,u)| \, |f_i(v)|\, \check p(t+\varepsilon- s,\check u,\check v) \Big\} \\
& \le\, 4\, t\;  \|m\|_\infty \|f_i\|_1  \, ,
 \end{aligned}
\end{equation*}
where we used the fact that $\check p(s,\cdot,\cdot)$ 
is a probability kernel in $\mathbb R^{d-1}$ for all $s>0$.
\end{proof}
\section{Empirical currents}\label{sec:currents}
In this section, we derive the law of large numbers for the empirical currents 
stated in Proposition \ref{res:lln_currents}. Recall that for $x \in \Lambda_N $, $1\le i\le 3$, 
and $j=1,\ldots,d$, $W_t^{x,x+e_j}(\eta_i)$ stands for the conservative current 
of particles of type $i$ across the edge $\{x,x+e_j\}$,   and $Q_t^x(\eta_i)$  for
the total number of particles of type $i$ created 
minus the total number of particles of type $i$ annihilated at site $x$ before time $t$.  
We have the following families of jump martingales: 
%
\begin{multline}
\widetilde W_t^{x,x+e_j} (\eta_i)= W_t^{x,x+e_j}(\eta_i) 
 -   N^2\int_0^t\Big( \eta_{i,s}(x)(1-\eta_{i,s}(x+e_j))
 - (1-\eta_{i,s}(x))\eta_{i,s}(x+e_j)\Big) ds\label{mart-cur-cons-ij}
 \end{multline}
 with quadratic variation (because $J^{x,x+e_j}_t(\eta_i) $ 
 and $J^{x+e_j,x}_t(\eta_i)$ have no common jump)   
\begin{eqnarray}\label{quadvar-mart-cur-cons-ij}
\langle \widetilde W^{x,x+e_j}(\eta_i) \rangle_t  
&=& N^2 \int_0^t\Big( \eta_{i,s}(x)(1-\eta_{i,s}(x+e_j))
 + (1-\eta_{i,s}(x))\eta_{i,s}(x+e_j)\Big) ds
 \end{eqnarray} 
and, for  $\widehat f = (f_1,f_2,f_3) : \ws_N \rightarrow \mathbb R^3$ 
defined in \eqref{eq:fmicro}, 
\begin{equation}\label{mart-cur-non-cons-i}
\widetilde Q_t^x (\eta_i) = Q_t^x (\eta_i) - \int_0^t  \tau_x   f_i (\xi_s,\omega_s)  ds 
\end{equation}
with quadratic variations
\begin{equation}\label{quadvar-mart-cur-non-cons-i}
\begin{cases}
\langle \widetilde Q^x (\eta_1) \rangle_t  &= \displaystyle{\int_0^t
\tau_x\Big( \beta_N(0,\xi_s,\omega_s)  \eta_{0,s}(0) + \eta_{3,s}(0) + (r+1)\eta_{1,s}(0)\Big) ds},\\  
\langle \widetilde Q^x (\eta_2) \rangle_t  &= \displaystyle{\int_0^t
\tau_x\Big( r \eta_{0,s}(0) + \eta_{3,s}(0) 
+\beta_N(0,\xi_s,\omega_s) \eta_{2,s}(0) + \eta_{2,s}(0)\Big) ds},\\
\langle \widetilde Q^x (\eta_3) \rangle_t  &= \displaystyle{\int_0^t
\tau_x\Big( \beta_N(0,\xi_s,\omega_s) \eta_{2,s}(0) + r \eta_{1,s}(0) + 2 \eta_{3,s} (0)\Big) ds}.
\end{cases}
\end{equation}
\begin{proof}[Proof of Proposition \ref{res:lln_currents}]
Given a smooth continuous vector field $\mathbf G =(G_1,..., G_d) 
\in \mathcal C_c^\infty(\Lambda,\mathbb R^d)$, after definition \eqref{empcurr}, 
sum the   martingale \eqref{mart-cur-cons-ij} 
 over $\{x,x+e_j \in \Lambda_N\}$  to get the  martingale $\widetilde {\mathbf M}_t^G$, given by
\begin{eqnarray*}
\widetilde {\mathbf M}_t^G (\eta_i) & = & \sum\limits_{j=1}^d 
\Bigg( \langle W_{j,t}^N(\eta_i), G_j \rangle  - \dfrac{N^2}{N^{d+1}} \sum\limits_{x,x+e_j \in \Lambda_N} 
\int_0^t G_j(x/N) \Big( \eta_{i,s} (x) - \eta_{i,s}(x+e_j) \Big) ds \Bigg) 
\\
  & = &  \langle \mathbb W_t^N(\eta_i), \mathbf G \rangle 
- \dfrac{1}{N^{d}} \sum\limits_{j=1}^d \sum\limits_{x \in \Lambda_N} 
\int_0^t \partial_{x_j} G_j(x/N) \eta_{i,s} (x) ds   +O(N^{-1})\\
& = &   \langle \mathbb W_t^N(\eta_i), \mathbf G \rangle 
- \sum\limits_{j=1}^d \langle \pi_s^{N,i}, \partial_{x_j} G_j 
 \rangle  +O(N^{-1}),
\end{eqnarray*}
where we did a Taylor expansion.
Relying on \eqref{quadvar-mart-cur-cons-ij},
 the expectation of $\langle \widetilde{\mathbf M}^G(\eta_i) \rangle_t$
vanishes when $N \rightarrow \infty$, so that by Doob's martingale inequality, for any $\delta >0$,
$$\lim_{N\to\infty} \mathbb P_{\mu_N}^{N,\widehat b} \Big[\sup\limits_{0 \leq t \leq T} 
\big| 
\sum_{i=1}^3\widetilde {\mathbf M}_t^G(\eta_i)\big| > \delta \Big ] \;=\; 0.$$
Using that the empirical density  $\widehat \pi^N$  converges 
towards the solution of  \eqref{f00}, this concludes  the law of large numbers 
 \eqref{res:lln_currents_1} for the current $\mathbb W_T^N$. \\ \\
Fix a smooth vector field $\widehat H =(H_1,H_2,H_3) \in \mathcal C_c^\infty (\Lambda,\mathbb R^3)$.  
Sum   \eqref{mart-cur-non-cons-i} over $x \in \Lambda_N$ to get the martingale 
$$\widetilde {\mathbf N}_t^H (\eta_i)= \langle Q_t^N (\eta_i), H_i \rangle 
- \dfrac{1}{N^d} \sum\limits_{x\in\Lambda_N} \int_0^t H_i(x/N) \tau_x  f_i (\xi_s,\omega_s) ds$$
Relying on \eqref{quadvar-mart-cur-non-cons-i}, 
the expectation of its quadratic variation vanishes as $N \rightarrow \infty$ as well. 
Use the replacement lemma \ref{replacement} to express $\widetilde {\mathbf N}_t^H (\eta_i)$ 
with functionals of the density fields and conclude \eqref{res:lln_currents_2} 
by Doob's martingale inequality: For any $\delta >0$,
$$\lim_{N\to\infty} \mathbb P_{\mu_N}^{N,\widehat b} \Big[\sup\limits_{0 \leq t \leq T} \big| 
\sum_{i=1}^3 \widetilde {\mathbf N}_t^H(\eta_i)
\big| > \delta \Big ] \;=\; 0.$$
\end{proof}
%
 \section*{Acknowledgements} \noindent
We thank the anonymous referees for their careful reading of
the manuscript and their comments. 
%
%
 
%
\end{document}